\documentclass[11pt,a4paper]{article}

\usepackage[T1]{fontenc}

\usepackage{graphicx}
\usepackage{amsthm,amsmath,amsfonts,amssymb,mathrsfs,mathtools}
\usepackage{calligra}
\usepackage{enumitem}
\usepackage{comment}
\usepackage{pb-diagram}

\usepackage{newtxtext,newtxmath}

\usepackage{tikz}
\usepackage{tikz-cd}
\usetikzlibrary{positioning,intersections,calc,arrows.meta,matrix,arrows}

\tikzcdset{
  arrow style=tikz,
  diagrams={>={Straight Barb[scale=0.8]}}
}

\usepackage{hyperref}

\theoremstyle{definition}
\newtheorem{theo}{Theorem}[section]
\newtheorem{prop}[theo]{Proposition}
\newtheorem{cor}[theo]{Corollary}
\newtheorem{defi}[theo]{Definition}
\newtheorem{lem}[theo]{Lemma}

\newtheorem{rem}[theo]{Remark}

\newtheorem{claim}[theo]{Claim}

\DeclareMathOperator{\Sol}{Sol}
\DeclareMathOperator{\coker}{Coker}
\DeclareMathOperator{\im}{Im}
\DeclareMathOperator{\Ker}{Ker}
\DeclareMathOperator{\Hom}{Hom}
\DeclareMathOperator{\intHom}{\mathscr{H}\text{\kern -3pt {\calligra\large om}}\,}
\DeclareMathOperator{\RintHom}{R\kern -2pt \mathscr{H}\text{\kern -3pt {\calligra\large om}}\,}
\DeclareMathOperator{\ori}{\text{\calligra\large or}}

\DeclareMathOperator{\Rc}{\mathbb{R}\text{-}c}
\DeclareMathOperator{\mS}{SS}
\DeclareMathOperator{\codim}{Codim}
\DeclareMathOperator{\supp}{supp}

\newcommand{\indlim}[1]{\text{``}\varinjlim_{#1}\text{''}}

\tikzset{
  labl/.style={anchor=south, rotate=90, inner sep=.5mm}
}
\tikzset{
  symbol/.style={
    draw=none,
    every to/.append style={
      edge node={node [sloped, allow upside down, auto=false]{$#1$}}
    }
  }
}

\newcommand{\function}[5]{%
  \begin{tikzcd}[
    column sep=2em,
    row sep=1ex,
    ampersand replacement=\&
  ]
  #1\colon \&[-3em]
  #2\vphantom{#3} \arrow[r] \&
  #3\vphantom{#2} \\
  \&
  #4\vphantom{#5} \arrow[u,symbol=\in] \arrow[r,mapsto] \&
  #5\vphantom{#4} \arrow[u,symbol=\in]
  \end{tikzcd}%
}

\newcommand{\idfunction}[5]{%
  \begin{tikzcd}[
    column sep=2em,
    row sep=1ex,
    ampersand replacement=\&
  ]
  #1\vphantom{#2} \arrow["#5",r] \&
  #2\vphantom{#1} \arrow[l] \\
  #3\vphantom{#2} \arrow[u,symbol=\in] \arrow[r] \&
  #4\vphantom{#3} \arrow[u,symbol=\in] \arrow[l]
  \end{tikzcd}%
}
\usepackage{indentfirst}

\title{Strong Regularity and Microsupport Estimates for Multi-Microlocalizations of Subanalytic Sheaves}    
\author{Ryosuke Sakamoto
\footnote{E-mail:sakamoto.ryosuke.h0@elms.hokudai.ac.jp}}    
\date{}    

\begin{document}

\maketitle

\begin{abstract}   
We introduce the notion of strong regularity for subanalytic sheaves and establish estimates for the supports and microsupports of their multi-microlocalizations.
As applications, we study subanalytic sheaves of Whitney and temperate holomorphic solutions of regular $\mathcal{D}$-modules along an involutive subbundle. 
In this setting we prove initial value theorems for multi-microlocal objects with growth conditions and division theorems for temperate and Whitney multi-microfunctions.
As a consequence, we obtain a multi-microlocal version of Bochner's tube theorem for solution sheaves of strongly asymptotically developable functions.
\end{abstract}


\section*{Introduction}

\indent Important analytical objects such as asymptotically developable functions and temperate distributions do not form classical sheaves. Therefore, the conventional microlocal methods developed for sheaves \cite{KS90} cannot be applied
directly to these objects. To handle such objects, several extensions of the classical framework have been introduced, notably ind-sheaves and subanalytic sheaves \cite{KS01,Lu1}. In \cite{HPY}, Honda, Prelli and Yamazaki constructed the theory of multi-microlocalizations of subanalytic sheaves along a certain family of submanifolds. Their construction unifies various types of specialization and microlocalization \cite{ST, D} and provides a functorial framework for multi-microlocal objects with growth conditions, including sheaves associated with Majima's asymptotically developable functions (see \cite{Maj, HP}). In particular, they obtained estimates for the microsupport of multi-microlocalizations in terms of the microsupport of the classical sheaf. However, the existing estimates are limited to classical sheaves and do not apply to subanalytic sheaves.
To overcome this difficulty, we introduce a strengthened notion of regularity for subanalytic sheaves, which we call strong regularity. This notion is designed so as to control the support and microsupport of multi-microlocalizations in the subanalytic setting.

The notion of microsupport for ind-sheaves was introduced in \cite{KS01mic}, and its functorial properties were studied in \cite{M}. In \cite{Lu2}, Prelli obtained support estimates for microlocalizations of subanalytic sheaves by using the microsupport in the sense of Kashiwara and Schapira \cite{KS01mic}. We found that it is necessary to strengthen the microlocal conditions to find the estimate of supports (or microsupport) of multi-microlocalizations for subanalytic sheaves. In this article, we introduce a possible approach to find an estimate of supports and microsupport  of multi-microlocalizations for subanalytic sheaves.

One possible approach is to introduce a stronger notion of regularity for subanalytic sheaves than that of Kashiwara and Schapira \cite{KS01mic}. In \cite{M}, Martins proved that the regularity of ind-sheaves in the sense of \cite{KS01mic} has a natural relation with $\mathcal{D}$-modules regular along an involutive subbundle in the sense of Kashiwara and Oshima \cite{KO}. This class of $\mathcal{D}$-modules is not necessarily holonomic. We show that the strengthened notion introduced in this paper is still compatible with this framework. As an application, we deduce several properties of multi-microlocal analytic objects, including Majima's strongly asymptotically developable solutions of regular $\mathcal{D}$-modules.   
Let us also mention that many authors have studied temperate and Whitney solutions of regular $\mathcal{D}$-modules \cite{KFS, Y} using Andronikof's $THom$ and Colin's $\overset{w}{\otimes}$.

With these estimates in hand, we prove inverse image and multi-microlocalization theorem.  Ultimately, we obtain division theorems with coefficients of temperate and Whitney multi-microfunctions for regular $\mathcal{D}$-modules. As an application, we prove Bochner's tube theorem for solution sheaves of strongly asymptotically developable functions. These are multi-microlocal counterparts of the results obtained in \cite{AT}, \cite{BT}, \cite{U}. Let us mention the related papers \cite{SU}, \cite{T} of this topic.

The contents of this paper are as follows.

In Section \ref{Sec: strong}, we introduce the notion of strong regularity and study its fundamental properties. We show a partial Abelian property of strong regularity. We prove that various solutions to $\mathcal{D}$-modules regular along an involutive subbundle in the sense of \cite{KO}, a class not necessarily holonomic, satisfy strong regularity. We also study functorial properties of strong regularity.

In Section \ref{Sec: estimate}, we obtain estimates of microsupports of multi-microlocalizations. We begin with a support estimate and consider some easy consequences of the results.  We prove the microsupport estimate using the techniques developed in \cite{KS01mic}. As a  simple application we prove the microlocal isomorphism of subanalytic sheaves under a non-characteristic condition.  

In Section \ref{Sec: inverse}, restricting ourselves to the normal crossing case, we prove inverse image and multi-microlocalization theorems. This is a multi-microlocal generalization of the \cite[Theorem 6.7.1]{KS90}.  The multi-conic structure of multi-microlocalizations plays a crucial role. Namely, \cite{HPY}'s multi-normal cones behave differently from the \cite{KS90}'s classical normal cone. First we discuss the inverse and multi-microlocalization theorem in classical sheaf case. Then we apply the results in Section \ref{Sec: estimate} to prove the corresponding theorem for strongly regular subanalytic sheaves. 

In Section \ref{Sec: D}, we give an application to $\mathcal{D}$-modules theory in multi-microlocal analysis. By making use of the triangle obtained in \cite{S}, we first prove the initial value problem for multi-microlocal objects including Majima's strongly asymptotically developable solutions to $\mathcal{D}$-modules regular along an involutive subbundle.
Finally, we obtain division theorems with coefficients of temperate and Whitney multi-microfunctions for regular $\mathcal{D}$-modules. This is regarded as an initial value problem for temperate and Whitney multi-microfunctions for regular $\mathcal{D}$-modules. In particular, by applying the division theorem along with the vanishing theorem of \cite{HPYaxv} we obtain multi-microlocal Bochner's tube theorem. 
\subsection{Notations}
We basically follow the notations of \cite{KS90}, \cite{HPY} and \cite{H}.
Let $X$ be a complex manifold, $X_\mathbb{R}$ the underlying real analytic manifold and $\overline{X}$ the complex conjugate manifold. 
We set $\mathscr{O}^t_X := \RintHom_{\rho_! \mathcal{D}_{\overline{X}}}(\rho_! \mathscr{O}_{\overline{X}}, \mathcal{D}b^t_{X_\mathbb{R}})$, $\mathscr{O}^w_X := \RintHom_{\rho_! \mathcal{D}_{\overline{X}}}(\rho_! \mathscr{O}_{\overline{X}}, \mathcal{C}^{\infty, w}_{X_\mathbb{R}})$. Here $\mathcal{D}b^t_{X_\mathbb{R}}$ is a subanalytic sheaf of temperate distributions on $X_\mathbb{R}$ and $\mathcal{C}^{\infty, w}_{X_\mathbb{R}}$ is a subanalytic sheaf of Whitney $C^\infty$-functions.

Let $\mathcal{C}$ denote a small abelian category.
\cite{KS06} defined the functor $J:{D}^b(\mathrm{Ind}(\mathcal{C})) \to (D^b(\mathcal{C}))^\wedge$ by setting for $F \in {D}^b(\mathrm{Ind}(\mathcal{C}))$ and $G \in D^b(\mathcal{C})$
\[
J(F)(G) = \Hom_{{D}^b(\mathrm{Ind}(\mathcal{C}))}(G, F).
\]
We use the category of compact support constructible sheaves $\mathrm{Mod}^c_{\Rc}(k_X)$ as $\mathcal{C}$ in this paper.
\section{Strongly regular subanalytic sheaves}\label{Sec: strong}
In this section, we develop the notion of strong regularities of subanalytic sheaves.
\begin{defi}[strongly regular subanalytic sheaves]
Let $V$ and $\Omega$ be subsets in $T^\ast X$.
We say $F \in D^b(k_{X_{sa}})$ is strongly regular along $V$ on $\Omega$ at $x \in X$ if there exist $F^\prime \in D^b(k_{X_{sa}})$ isomorphic to $F$ in a neighborhood of $x$, a small and filtrant category $I$ and a functor $I \to D^{[a,b]}_{\Rc}(k_X), i \mapsto F_i$ such that $J(F^\prime) \simeq \indlim{i} J(R\rho_*F_i)$ and $\mS(F_i) \cap \Omega \subset V$. If for any $x\in X$ there exists a neighborhood $U$ of $x$ such that $F \in D^b(k_{X_{sa}})$ is strongly regular along $V$ at $x \in X$ on $\pi^{-1}(U)$ we say $F \in D^b(k_{X_{sa}})$ is strongly regular along $V$.
\end{defi}
\begin{prop}\label{abel}
Let $F \xrightarrow{f} G \xrightarrow{g} H \xrightarrow{+1}$ be a distinguished triangle in $D^b(k_{X_{sa}})$.
Suppose there exists a distinguished triangle of functors from a small filtrant inductive category $I$ to $D^{[a,b]}_{\Rc}(k_X)$, $F \mapsto F_i$, $G \mapsto G_i$ and $H \mapsto H_i$, 
\[\begin{tikzcd}
	{F_i} & {G_i} & {H_i} & {{},}
	\arrow["{f_i}", from=1-1, to=1-2]
	\arrow["{g_i}", from=1-2, to=1-3]
	\arrow["{+1}", from=1-3, to=1-4]
\end{tikzcd}\]
such that $J(F )\simeq \indlim{i} J(F_i)$, $f \simeq \indlim{i} f_i$, $J(G) \simeq \indlim{i} J(G_i)$ and $g \simeq \indlim{i} g_i$. Assume moreover $\mS(F_i) \subset V$ and $\mS(G_i) \subset V$ and the correspondence is functorial with respect to morphisms of triangles, that is for any $i \to j$, $j \to k$ and $j \to k$ the following associated diagram is commutative:
\[
\begin{tikzcd}
F_i \arrow[r,"f_i"] \arrow[d] \arrow[dd,bend right] & 
G_i \arrow[r,"g_i"] \arrow[d] \arrow[dd,bend right] & 
H_i \arrow[r,"+1"] \arrow[d] \arrow[dd,bend right] & {} \\ 
F_j \arrow[r, "f_j"] \arrow[d] & 
G_j \arrow[r, "g_j"] \arrow[d] & 
H_j \arrow[r,"+1"'] \arrow[d] & {} \\ 
F_k \arrow[r, "f_k"] & 
G_k \arrow[r, "f_k"] & 
H_k \arrow[r,"+1"] & {}.
\end{tikzcd}
\]

Then $H$ is strongly regular along $V$.
\end{prop}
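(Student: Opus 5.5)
We need to show $J(H) \simeq \indlim{i} J(R\rho_* H_i)$ and $\mS(H_i)\subset V$. The microsupport condition is the easy half. The real work is identifying $J(H)$ with the ind-limit of the $J(R\rho_*H_i)$, and the functoriality hypothesis on the triangles is what makes that possible.

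For the microsupport, we apply the triangle inequality for microsupports \cite[Prop.~5.1.3]{KS90} to the distinguished triangle $F_i \to G_i \to H_i \xrightarrow{+1}$ of $\mathbb{R}$-constructible sheaves. It gives
\[
\mS(H_i) \subset \mS(F_i)\cup \mS(G_i) \subset V
\]
for every $i$. Also, $H_i \in D^{[a-1,b]}_{\Rc}(k_X)$ since $F_i[1]$ and $G_i$ have amplitude in $[a-1,b]$. So $i\mapsto H_i$ is a functor from $I$ to a bounded range of constructible complexes; here the assumed commutativity of the triangle diagrams for $i\to j\to k$ is what makes $i\mapsto H_i$ compatible with composition.

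Next we compare $J(H)$ with $\indlim{i} J(R\rho_*H_i)$, identifying $F_i$ with $R\rho_*F_i$ as in the hypotheses. For each $i$ we have the canonical maps $F_i\to F$ and $G_i\to G$ coming from $J(F)\simeq\indlim{i}J(F_i)$ and $J(G)\simeq\indlim{i}J(G_i)$, compatible with $f_i$ and $f$. By TR3 they extend to a map of triangles, giving $H_i\to H$. I would fix these choices so that they are compatible with the transition maps $H_i\to H_j$. This is exactly where the functoriality hypothesis is used: the morphisms of triangles are given as part of the data, so we take the induced cone maps $H_i \to H_j$ as the transition maps. This yields a morphism $\indlim{i} J(H_i)\to J(H)$.

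To show this is an isomorphism, I would evaluate at any $K \in D^b(\mathrm{Mod}^c_{\Rc}(k_X))$. The functor $\Hom(K,\cdot)$ turns each triangle into a long exact sequence. Filtrant inductive limits of abelian groups are exact, so $\varinjlim_i \Hom(K, \cdot [n])$ applied to the triangles $F_i\to G_i\to H_i$ is still a long exact sequence. It maps to the long exact sequence of $\Hom(K,\cdot[n])$ on $F\to G\to H$. For this we need the identification $\Hom(K, F[n])\simeq \varinjlim_i \Hom(K,F_i[n])$, which holds because $K$ has compact support, is constructible, and so is a compact object in the sense of \cite{KS06}. The maps on the $F$ and $G$ terms are isomorphisms by hypothesis, so the five lemma gives $\varinjlim_i\Hom(K,H_i[n])\simeq \Hom(K,H[n])$. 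Hence $J(H)\simeq\indlim{i}J(R\rho_*H_i)$, and $H$ is strongly regular along $V$, with $F'=H$ globally.

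The main obstacle is that cones are not functorial in the triangulated category. The maps $H_i\to H$ produced by TR3 are not unique, so they might fail to be compatible with each other and with the transition maps, and the five-lemma comparison then needs a genuine morphism of long exact sequences. I would handle this by using the stated functoriality of the triangle system. Where that data is insufficient, I would replace it by the ind-object $\indlim{i}H_i$ directly, using the fact that $J$ sends distinguished triangles to sequences that become exact after evaluation at compact objects. Then I would check the commutativity of the relevant squares only after passing to $\varinjlim$, where ambiguities coming from morphisms factoring through $F_i[1]$ can be controlled using $\indlim{i}F_i \simeq F$.
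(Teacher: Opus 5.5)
Your overall route is the paper's: the triangle inequality gives $\mS(H_i)\subset V$, and a five-lemma comparison of long exact sequences after a filtrant colimit gives the rest. The paper applies the five lemma to cohomology objects $H^k$, after the reduction of \cite[Lemma 2.3.1]{KSW}. You apply it to $\Hom(K,\cdot)$ for test objects $K$, which is equivalent. Your identification $\Hom(K,F[n])\simeq\varinjlim_i\Hom(K,F_i[n])$ is just the hypothesis $J(F)\simeq\indlim{i}J(F_i)$ evaluated at $K[-n]$, since colimits of presheaves are computed objectwise; no compactness argument is needed.

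\textbf{The gap: the comparison map on the third term.} The functoriality hypothesis gives you the transition maps $t_{ij}\colon H_i\to H_j$, but nothing about maps into $H$. If $\phi_i,\phi_j$ are TR3 fill-ins, then $\phi_j t_{ij}$ and $\phi_i$ are two fill-ins for the same pair $(F_i\to F,\ G_i\to G)$. They therefore differ by some $\epsilon\circ h_i$ with $\epsilon\colon F_i[1]\to H$, and nothing forces this to vanish. Since $\Hom(K,H)$ does not depend on $i$, passing to $\varinjlim_i$ does not remove the discrepancy. So $x\mapsto\phi_i\circ x$ is not a well-defined map on $\varinjlim_i\Hom(K,H_i)$, and your last paragraph names the problem without resolving it. You also cannot skip the map: exact sequences with isomorphic outer terms can have non-isomorphic middle terms.

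\textbf{The obstruction is real.} Consider a toy model in $D^b(\mathrm{Ab})$, with $\mathcal{C}$ the finitely generated abelian groups:
\begin{itemize}
\item take $F_i=\mathbb{Z}[-1]$ with transitions $p$, $G_i=\mathbb{Z}$ with transitions $1$, and $f_i=0$;
\item take $H_i=\mathbb{Z}\oplus\mathbb{Z}$ with transitions $\begin{pmatrix}1&\epsilon_i\\0&p\end{pmatrix}$, which are morphisms of triangles.
\end{itemize}
Then $F=\mathbb{Z}[1/p][-1]$, $G=\mathbb{Z}$ and $H=\mathbb{Z}\oplus\mathbb{Z}[1/p]$. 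A compatible family of fill-ins $\phi_i$ exists iff the $p$-adic integer $\sum_k\epsilon_kp^k$ lies in $\mathbb{Z}$. If it is irrational, $\varinjlim_iH_i$ is a non-split extension of $\mathbb{Z}[1/p]$ by $\mathbb{Z}$ with no nonzero map to $\mathbb{Z}$. Evaluating at $K=\mathbb{Z}$ then shows $\indlim{i}J(H_i)\not\simeq J(H)$.

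\textbf{How to close it.} The compatible comparison must be an input. The paper's proof simply takes it for granted as its dotted arrow. In effect it reads $g\simeq\indlim{i}g_i$ as supplying a morphism $\indlim{i}J(H_i)\to J(H)$ compatible with $g$ and with the connecting morphisms. Alternatively, work with a strict inductive system of complexes and take $H_i$ to be the mapping cone of a morphism of functors, or its cokernel as in the proof of Theorem \ref{reg_sol}; then the maps $H_i\to H$ are canonical. Once that comparison is in hand, your exactness-plus-five-lemma argument is complete.
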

\proof
By the triangular inequality, we have $\mS(H_i) \subset V$.
Then it remains to prove $J(H) \simeq \indlim{i\in I} J(H_i)$. By \cite[Lemma 2.3.1]{KSW}, this is equivalent to prove
\[
H^k(H)\simeq  \indlim{i\in I} H^k(H_i),
\]
for any $k$. Consider the following long exact sequences:
\[\begin{tikzcd}
	\cdots & {H^k(F)} & {H^k(G)} & {H^k(H)} & \cdots \\
	\cdots & {\indlim{i} H^k(F_i)} & {\indlim{i} H^k(G_i)} & {\indlim{i} H^k(H_i)} & \cdots.
	\arrow[from=1-1, to=1-2]
	\arrow[from=1-2, to=1-3]
	\arrow[from=1-3, to=1-4]
	\arrow[from=1-4, to=1-5]
	\arrow[from=2-1, to=2-2]
	\arrow["\sim", from=2-2, to=1-2]
	\arrow[from=2-2, to=2-3]
	\arrow["\sim", from=2-3, to=1-3]
	\arrow[from=2-3, to=2-4]
	\arrow[dotted, from=2-4, to=1-4]
	\arrow[from=2-4, to=2-5]
\end{tikzcd}\]
By five lemma for each $k$ the dotted arrow is an isomorphism.
\endproof
By the following theorem, we show the solutions to regular $\mathcal{D}$-modules are naturally strongly regular, while Martins \cite{M} proved the regularity in the sense of \cite{KS01mic} in the case of temperate holomorphic functions. Let $X$ be a complex manifold and let $\mathcal{M}$ be a coherent $\mathcal{D}_X$-module. 
We set for short:
\[
\Sol^t(\mathcal{M}):= \RintHom_{\rho_! \mathcal{D}_X}(\rho_! \mathcal{M}, \mathscr{O}_X^t),
\]
\[
\Sol^w(\mathcal{M}):= \RintHom_{\rho_! \mathcal{D}_X}(\rho_! \mathcal{M}, \mathscr{O}_X^w).
\]
\begin{theo}\label{reg_sol}
{\itshape
If $\mathcal{M}$ is a coherent $\mathcal{D}_X$-module regular along an involutive vector subbundle $V$ of $T^\ast X$ in the sense of Kashiwara-Oshima \cite{KO}, then $\Sol^w(\mathcal{M})$ and $\Sol^t(\mathcal{M})$ are strongly regular along $V$.
}
\end{theo}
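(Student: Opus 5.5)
The argument follows Martins' proof of regularity of $\Sol^t(\mathcal{M})$ in the sense of \cite{KS01mic}, but I will keep track of an explicit ind-presentation by $\mathbb{R}$-constructible sheaves rather than only a microlocal estimate. The statement is local, so fix $x\in X$. After shrinking to a neighborhood $U$ of $x$, I take a finite free resolution of $\mathcal{M}$ over $\mathcal{D}_X$; locally this has the form $0\to\mathcal{D}_X^{N_r}\to\cdots\to\mathcal{D}_X^{N_0}\to\mathcal{M}\to0$. Then $\Sol^t(\mathcal{M})$ (resp. $\Sol^w(\mathcal{M})$) is represented on $U$ by the complex $(\mathscr{O}^t_X)^{N_\bullet}$ (resp. $(\mathscr{O}^w_X)^{N_\bullet}$), with differentials given by matrices of differential operators.

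Next I write $\mathscr{O}^t_X$ and $\mathscr{O}^w_X$ as ind-objects built from constructible data. The standard presentations (Kashiwara–Schapira) are
\[
J(\mathscr{O}^t_X)\simeq \indlim{W} J\bigl(R\rho_*\RintHom(k_W,\mathscr{O}_X)\bigr)\quad\text{and}\quad J(\mathscr{O}^w_X)\simeq \indlim{W} J\bigl(R\rho_*(D'k_W\otimes\mathscr{O}_X)\bigr),
\]
where $W$ runs over relatively compact open subanalytic subsets and the transition maps come from inclusions. Applying the resolution termwise gives, for each $W$, the complexes $F_W:=R\intHom_{\mathcal{D}_X}(\mathcal{M},\RintHom(k_W,\mathscr{O}_X))\simeq R\intHom(k_W,\Sol(\mathcal{M}))$ and similarly $F_W:=D'k_W\otimes\Sol(\mathcal{M})$. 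These are functorial in $W$ and are compatible with the differentials, because the $\mathcal{D}_X$-action commutes with the $W$-structure. Inductive limits commute with the finite complex, and \cite[Lemma 2.3.1]{KSW} reduces the comparison to cohomology objects, as in the proof of Proposition~\ref{abel}. Hence $J(\Sol^t(\mathcal{M}))\simeq\indlim{W}J(R\rho_*F_W)$ on $U$, and likewise for $\Sol^w$.

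\textbf{Main obstacle.} The family $F_W$ must lie in some fixed $D^{[a,b]}_{\Rc}(k_X)$, and its microsupport must satisfy $\mS(F_W)\cap\pi^{-1}(U)\subset V$. Since $\mathcal{M}$ is not holonomic, $\Sol(\mathcal{M})$ is not $\mathbb{R}$-constructible. So the index family has to be modified: I plan to replace $k_W$ by constructible sheaves adapted to $V$. Because $V$ is an involutive subbundle, after shrinking $U$ I can choose coordinates in which $V$ is the conormal-type bundle of a foliation, say $V=\{\xi'=0\}$ with leaves $x''=\text{const}$. Then I let $W$ range over open sets that are invariant along the leaves (cylinders). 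For such $W$, $\mS(k_W)$ lies in the conormal directions of the foliation; the index family is still cofinal in the relevant sense for the temperate and Whitney growth conditions, since growth along a product can be tested on product neighborhoods.

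To obtain constructibility and the bound $\mS(F_W)\subset V$ I invoke the Kashiwara–Oshima structure theorem. A module regular along $V$ is, locally and after a quantized contact transformation (or simply in these coordinates), equivalent to a system of partial de Rham type $\partial_{x'}u=0$ tensored with a coherent module in the $x''$-variables. Its solution complex is therefore essentially a pullback along the leaf projection $p$, so $\mS(\Sol(\mathcal{M}))\subset V$, and $F_W$ is computed by a proper direct image along $p$ of constructible data on the leaf space. Combining $\mS(k_W)\subset V$ with the estimates $\mS(R\intHom(k_W,G))\subset \mS(G)\,\widehat{+}\,\mS(k_W)^a$ and $\mS(D'k_W\otimes G)\subset\mS(G)\,\widehat{+}\,\mS(D'k_W)$ from \cite{KS90}, and using that $V$ is a vector subbundle and hence closed under fiberwise sums, gives $\mS(F_W)\subset V$. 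The amplitude $[a,b]$ is uniform because it is controlled by the length of the resolution and by $\dim X$. I expect the real difficulty to be justifying cofinality and constructibility of the modified index family with the growth conditions intact. If that fails, I would fall back on Martins' argument \cite{M}, which proves exactly the microsupport estimate, and check step by step that the ind-system it produces is one of $\mathbb{R}$-constructible sheaves.
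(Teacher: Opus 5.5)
Your proposal has a genuine gap, and it is exactly the one you labelled the main obstacle: you never produce a filtrant system of objects of $D^{[a,b]}_{\Rc}(k_X)$ whose microsupports lie in $V$. The two presentations you call standard are not correct. Sections of $\RintHom(k_W,\mathscr{O}_X)$ are holomorphic functions on $W$ with no growth condition at $\partial W$. The product $D'k_W\otimes\mathscr{O}_X$ is an ordinary tensor product, not Colin's $\overset{w}{\otimes}$. Both assignments are contravariant in $W$. So neither system computes $\mathscr{O}^t_X$ or $\mathscr{O}^w_X$.

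Even if you granted them, each $F_W$ is built from the classical $\Sol(\mathcal{M})$, which is not $\mathbb{R}$-constructible. Restricting to cylindrical $W$ does not change that. The ``constructible data on the leaf space'' would be $\Sol(\mathcal{N})$ for a coherent, generally non-holonomic $\mathcal{D}_Y$-module, and that is not constructible either. The cofinality claim for the growth conditions is only asserted, and falling back on Martins' argument is not itself an argument. Finally, a resolution by free $\mathcal{D}_X$-modules is the wrong decomposition. Its terms $(\mathscr{O}^\lambda_X)^{N_j}$ have no microsupport bound in terms of $V$ (classically $\mS(\mathscr{O}_X)=T^\ast X$), so nothing can be estimated term by term.

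The missing idea is that you get constructibility for free on the leaf space and the microsupport bound for free from the submersion. The paper proceeds as follows.
\begin{enumerate}
\item Reduce to $X=Z\times Y$, with $f\colon X\to Y$ the projection and $V=X\times_Y T^\ast Y$.
\item Resolve $\mathcal{M}$ locally by finite sums of $\mathcal{D}_{X\to Y}$, not $\mathcal{D}_X$ \cite[Lemma 3.6]{KFS}.
\item Use Prelli's isomorphism $\Sol^\lambda(\mathcal{D}_{X\to Y})\simeq f^{-1}\mathscr{O}^\lambda_Y$ \cite[Corollary A.3.7, Corollary A.4.8]{Lu2}. This is the genuine growth-condition input, and your approach has nothing in its place.
\item Every object of $D^b(k_{Y_{sa}})$, in particular $\mathscr{O}^\lambda_Y$, is quasi-isomorphic to some $Q\indlim{i}F_i$ with $F_i\in C^{[a,b]}(\mathrm{Mod}^c_{\Rc}(k_Y))$, with no further condition on the $F_i$.
\item Hence $J(f^{-1}\mathscr{O}^\lambda_Y)\simeq\indlim{i}J(f^{-1}F_i)$ by \cite[Proposition 2.3.2]{KSW}, and $\mS(f^{-1}F_i)\subset X\times_Y T^\ast Y=V$ automatically because $f$ is a submersion.
\item Proposition~\ref{abel} carries the estimate through the resolution. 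It is combined with \cite[Proposition 6.1.13]{KS01} to realize each morphism of the resolution as a morphism of ind-systems over a common index category.
\end{enumerate}
Your remark that the solution complex is a pullback along the leaf projection is the right intuition. Apply it to $\Sol^\lambda$ rather than $\Sol$, and combine it with an arbitrary ind-$\mathbb{R}$-constructible presentation on $Y$ instead of hand-built $k_W$'s; that closes the argument.
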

\proof
We may assume $X = Z \times Y$, for some complex manifolds $Z$ and $Y$, that $f$ is the projection $X \to Y$ and that $V = X \times_Y T^\ast Y$. By \cite[Lemma 3.6]{KFS}, it is enough to prove the following claim: 
\begin{claim}\label{transport}
There exists a small filtrant inductive system $\{F_i\}_{i \in I} \in \mathrm{C}^{[a,b]}(\mathrm{Mod}^c_{\Rc}(k_Y))$ so that $\Sol^\lambda(\mathcal{D}_{X\to Y})$ is quasi-isomorphic to $Q \indlim{i} F_i$ and
\[
\mS(F_i) \subset V, \text{ for all }i \in I.
\]
\end{claim}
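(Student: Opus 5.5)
The plan is to make $\Sol^\lambda(\mathcal{D}_{X\to Y})$ ($\lambda = t,w$) explicit in the product situation $X = Z\times Y$, $f$ the projection, $V = X\times_Y T^\ast Y$. There $\mathcal{D}_{X\to Y}\simeq \mathcal{O}_Z\boxtimes\mathcal{D}_Y$ as a left $\mathcal{D}_X$-module. It is the quotient of $\mathcal{D}_X$ by the left ideal generated by the vector fields $\partial_{z_1},\dots,\partial_{z_m}$ along $Z$. So it has the Koszul resolution
\[
0\to \mathcal{D}_X\otimes\textstyle\bigwedge^m\mathbb{C}^m\to\cdots\to\mathcal{D}_X\otimes\mathbb{C}^m\to\mathcal{D}_X\to\mathcal{D}_{X\to Y}\to 0,
\]
the maps being given by right multiplication by the $\partial_{z_j}$. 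Applying $\RintHom_{\rho_!\mathcal{D}_X}(\rho_!\,\cdot\,,\mathscr{O}^\lambda_X)$ identifies $\Sol^\lambda(\mathcal{D}_{X\to Y})$ with the holomorphic de Rham complex of $\mathscr{O}^\lambda_X$ relative to $f$, i.e. with the subsheaf of $\mathscr{O}^\lambda_X$ of sections constant along the fibres of $f$ (possibly up to a shift). Concretely, I expect an isomorphism $\Sol^\lambda(\mathcal{D}_{X\to Y})\simeq f^{-1}\mathscr{O}^\lambda_Y$ in $D^b(k_{X_{sa}})$. This follows from the relative Dolbeault–Grothendieck lemma with temperate, resp. Whitney, growth. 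To prove it, I will realize $\mathscr{O}^\lambda_X$ by $\mathcal{D}b^t$ or $\mathcal{C}^{\infty,w}$ and integrate in the $z$-variables on products of subanalytic relatively compact sets. The growth conditions are preserved by integration along fibres on such product opens, and these form a base for the subanalytic site locally.

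Next I write $\mathscr{O}^\lambda_Y$ as an ind-object of constructible sheaves on $Y$. By the standard description of subanalytic sheaves, any $G\in D^b(k_{Y_{sa}})$ satisfies
\[
J(G)\simeq\indlim{(U,s)} J(R\rho_\ast k_U),
\]
where $(U,s)$ runs over the filtrant category of relatively compact subanalytic opens $U$ with a section $s$ over $U$. To get bounded amplitude $[a,b]$, I will use the concrete complexes $\mathcal{D}b^t_{Y_\mathbb{R}}\otimes\Omega^{0,\bullet}$, resp. $\mathcal{C}^{\infty,w}\otimes\Omega^{0,\bullet}$. These are bounded complexes of $\Gamma$-acyclic subanalytic sheaves, and I write each term as $Q\indlim{}$ of $\mathbb{R}$-constructible sheaves $G_j$ on $Y$, functorially in the differentials; taking a common cofinal indexing keeps the complex structure. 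Then I set $F_i = f^{-1}G_i$. Since $f^{-1}$ commutes with $\rho_\ast$ on constructible objects and with filtrant ``$\varinjlim$'', this gives $\Sol^\lambda(\mathcal{D}_{X\to Y})\simeq Q\indlim{i}F_i$. (The claim writes $k_Y$; I read it as $F_i$ living on $X$ as pull-backs of sheaves from $Y$.) The microsupport bound is then immediate: $\mS(f^{-1}G_i)\subset f_d\,f_\pi^{-1}\mS(G_i)\subset X\times_Y T^\ast Y = V$, since $f$ is a submersion.

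The main obstacle is the first step: proving that the relative de Rham complex of $\mathscr{O}^t_X$ (and $\mathscr{O}^w_X$) along $f$ is concentrated in one degree and equals $f^{-1}\mathscr{O}^\lambda_Y$ in the subanalytic sense. This needs a tempered/Whitney Poincaré lemma on subanalytic products, and the subtle point is that opens of $X_{sa}$ are not all products. I would first try to reduce to stalks at the level of ind-objects via \cite[Lemma 2.3.1]{KSW}, checking the isomorphism on a cofinal family of product-type opens $W\times U$ with $W$ a Stein polydisc. A second subtlety is keeping the inductive system filtrant while passing from a complex of ind-objects to a single system of complexes. I would handle this by the standard fact that bounded complexes in $\mathrm{Ind}(\mathcal{C})$ are ind-limits of complexes in $\mathcal{C}$, applied to $\mathcal{C}=\mathrm{Mod}^c_{\Rc}(k_Y)$.
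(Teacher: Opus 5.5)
Your architecture is the paper's. You identify $\Sol^\lambda(\mathcal{D}_{X\to Y})$ with $f^{-1}\mathscr{O}^\lambda_Y$, write $\mathscr{O}^\lambda_Y\simeq Q\indlim{i}G_i$ with $G_i$ bounded complexes of $\mathbb{R}$-constructible sheaves on $Y$, and pull back. For the pull-back you use that $f^{-1}$ commutes with filtrant ind-limits (\cite[Theorem 3.1]{KS01mic}, \cite[Proposition 2.3.2]{KSW}) and that $\mS(f^{-1}G_i)\subset X\times_Y T^\ast Y=V$ because $f$ is smooth. Your reading that the system in the claim is really $\{f^{-1}G_i\}$ on $X$ is also how the paper uses it. The one divergence is the first step, and that is where your proposal has a gap. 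The paper does not prove $\Sol^\lambda(\mathcal{D}_{X\to Y})\simeq f^{-1}\mathscr{O}^\lambda_Y$; it quotes \cite[Corollary A.4.8]{Lu2} for $\lambda=w$ and \cite[Corollary A.3.7]{Lu2} for $\lambda=t$. The proof you sketch for it does not work on $X_{sa}$.

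The Koszul resolution of $\mathcal{D}_{X\to Y}\simeq\mathcal{D}_X/\sum_j\mathcal{D}_X\partial_{z_j}$ is fine. The reduction to product opens $W\times U$ is not. In the subanalytic site a relatively compact open must be covered by finitely many members of a covering, and products do not form a basis. For example, any product $A\times B\subset\{0<y<z<1\}\subset\mathbb{R}^2$ satisfies $\inf A\ge\sup B>0$, so finitely many such products never cover this open near its vertex. Hence a fibrewise tempered or Whitney Poincar\'e lemma on products does not show that a morphism of subanalytic sheaves is an isomorphism. \cite[Lemma 2.3.1]{KSW} cannot replace it either: it compares cohomology objects of ind-objects and gives no stalk or basis test.

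Moreover, $\mathscr{O}^\lambda_X$ is only an object of the derived category, defined through the Dolbeault complex of $\mathcal{D}b^t$, resp.\ $\mathcal{C}^{\infty,w}$. So ``sections constant along the fibres'' has to become a Koszul--Dolbeault double complex with growth estimates valid on arbitrary subanalytic opens. That is exactly the nontrivial content of Prelli's corollaries, and citing them closes the gap.

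A minor point: the category of pairs $(U,s)$ is not filtrant, since sections need not glue or extend. So $\indlim{(U,s)}J(R\rho_\ast k_U)$ is not a valid description. What you need, and eventually use, is that a bounded complex in $\mathrm{Ind}(\mathrm{Mod}^c_{\Rc}(k_Y))$ is a small filtrant ind-limit of bounded complexes of constructible sheaves.
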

In fact, by \cite[Lemma 3.6]{KFS} $\mathcal{M}$ admits a finite resolution locally on $X$
\[\begin{tikzcd}
	0 & {\mathcal{L}^N} & {\mathcal{L}^{N-1}} & \cdots & {\mathcal{L}^0} & {\mathcal{M}} & 0,
	\arrow[from=1-1, to=1-2]
	\arrow["{\varphi_N}", from=1-2, to=1-3]
	\arrow["{\varphi_{N-1}}", from=1-3, to=1-4]
	\arrow["{\varphi_{1}}", from=1-4, to=1-5]
	\arrow[from=1-5, to=1-6]
	\arrow[from=1-6, to=1-7]
\end{tikzcd}\]
each $\mathcal{D}_X$-module $\mathcal{L}^j$ being isomorphic to a finite direct sum of the $\mathcal{D}_X$-module $\mathcal{D}_{X\to Y}$. We obtain the following short exact sequences:
\[\begin{tikzcd}
	0 & {\mathcal{L}^N} & {\mathcal{L}^{N-1}} & {\im \varphi_{N-1} } & 0, \\
	0 & {\im \varphi_{j-1}} & {\mathcal{L}^{j}} & {\im \varphi_{j}} & 0, \\
	0 & {\im \varphi_0} & {\mathcal{L}^0} & {\mathcal{M}} & 0.
	\arrow[from=1-1, to=1-2]
	\arrow[from=1-2, to=1-3]
	\arrow[from=1-3, to=1-4]
	\arrow["\cdots"{description}, draw=none, from=1-3, to=2-3]
	\arrow[from=1-4, to=1-5]
	\arrow[from=2-1, to=2-2]
	\arrow[from=2-2, to=2-3]
	\arrow[from=2-3, to=2-4]
	\arrow["\cdots"{description}, draw=none, from=2-3, to=3-3]
	\arrow[from=2-4, to=2-5]
	\arrow[from=3-1, to=3-2]
	\arrow[from=3-2, to=3-3]
	\arrow[from=3-3, to=3-4]
	\arrow[from=3-4, to=3-5]
\end{tikzcd}\]
We have the distinguished triangle
\[\begin{tikzcd}
	{\Sol^w(\mathcal{L}^N)} & {\Sol^w(\mathcal{L}^{N-1})} & {\Sol^w(\im \varphi_{N-1})} & {{}}.
	\arrow["\Sol^w(\varphi_{N})", from=1-1, to=1-2]
	\arrow[from=1-2, to=1-3]
	\arrow["{+1}", from=1-3, to=1-4]
\end{tikzcd}\]
There exists a small filtrant inductive system $\{F_i\}_{i \in I} \in \mathrm{C}^{[a,b]}(\mathrm{Mod}^c_{\Rc}(k_Y))$ so that $\Sol^\lambda(\mathcal{L}^N)$ is quasi-isomorphic to $Q \indlim{i} F_i$ and
\[
\mS(F_i) \subset V, \text{ for all }i \in I.
\]
Similarly, there exists a small filtrant inductive system $\{G_i\}_{i \in J} \in \mathrm{C}^{[a,b]}(\mathrm{Mod}^c_{\Rc}(k_Y))$ so that $\Sol^\lambda(\mathcal{L}^{N-1})$ is quasi-isomorphic to $Q \indlim{i} G_i$ and
\[
\mS(G_i) \subset V, \text{ for all }i \in J.
\]
By \cite[Proposition 6.1.13]{KS01} we find a small and filtrant category $K$ and a morphism of functors $\varphi$.
Set $H_i := \coker \varphi_i$ for each $i \in K$ and $\{H_i\}_{i \in K}$ is a small filtrant system. Then using Proposition \ref{abel} we have
$\Sol^w(\im \varphi_{N-1})$ satisfies Claim \ref{transport} replaced with $\Sol^\lambda(\mathcal{D}_{X\to Y})$.
Therefore the result follows by repeatedly applying the above argument.

Let us prove the Claim \ref{transport} for the case of Whitney holomorphic functions. By \cite[Corollary A.4.8]{Lu2}, we have
\[
f^{-1}\mathscr{O}_Y^w \simeq \RintHom_{\rho_! \mathcal{D}_X}(\rho_! \mathcal{D}_{X\to Y}, \mathscr{O}_X^w).
\]
There exists a small filtrant inductive system $\{F_i\}_{i \in I} \in \mathrm{C}^{[a,b]}(\mathrm{Mod}^c_{\Rc}(k_Y))$ so that $F:=\mathscr{O}_Y^w$ is quasi-isomorphic to $Q \indlim{i} F_i$. Then by \cite[Theorem 3.1]{KS01mic} and \cite[Proposition 2.3.2]{KSW} we obtain $J(f^{-1}F) \simeq  \indlim{i} J (f^{-1}F_i)$ and one has
\[
\mS(f^{-1}F_i) \subset X \times_Y T^\ast Y, \text{ for all }i \in I.
\]
It follows that $f^{-1}F$ is strongly regular along $V$.

Next we prove the case of $\mbox{Sol}^t(\mathcal{M})$, the proof is similar. In fact, we may use \cite[Corollary A.3.7]{Lu2} in this case. 
We have
\[
f^{-1}\mathscr{O}_Y^t \simeq \RintHom_{\rho_! \mathcal{D}_X}(\rho_! \mathcal{D}_{X\to Y}, \mathscr{O}_X^t).
\]
And the assertion follows.
\endproof
We study some functorial properties of the regularity.
\begin{prop}\label{sec1: pushforward}
Assume $f:Y\to X$ is a closed embedding and identify $Y$ with a submanifold of $X$. Let $V$ be a subset in $T^\ast Y$.
If $F \in D^b(k_{Y_{sa}})$ is strongly regular along $V$, then $Rf_\ast F$ is strongly regular along $f_\pi {}^tf^{\prime-1}(V)$.
\end{prop}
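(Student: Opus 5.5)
Fix $x\in X$. If $x\notin Y$, then $Rf_\ast F$ vanishes near $x$ because $Y$ is closed, and the empty system shows it is strongly regular there. So assume $x\in Y$. By hypothesis there is a neighborhood $W$ of $x$ in $Y$, an object $F'\in D^b(k_{Y_{sa}})$ with $F'|_W\simeq F|_W$, a small filtrant category $I$ and a functor $I\to D^{[a,b]}_{\Rc}(k_Y)$, $i\mapsto F_i$. These satisfy $J(F')\simeq\indlim{i}J(R\rho_\ast F_i)$ and $\mS(F_i)\cap \pi^{-1}(W)\subset V$.

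Choose an open $U\ni x$ in $X$ with $U\cap Y\subset W$. Since $f$ is a closed embedding, $Rf_\ast$ commutes with restriction to open sets. Hence $Rf_\ast F'$ is isomorphic to $Rf_\ast F$ on $U$, and $F'$ will serve as the required modification.

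\textbf{Ind-presentation.} We show $J(Rf_\ast F')\simeq \indlim{i}J(R\rho_\ast Rf_\ast F_i)$. Since $f$ is proper, $Rf_\ast=Rf_{!!}$ on subanalytic sheaves. The direct image then commutes with filtrant inductive limits ``$\varinjlim$'' in the sense of \cite{KS01}: it is computed termwise on ind-objects, and \cite[Proposition 2.3.2]{KSW} transports this through $J$. Moreover $R\rho_\ast$ commutes with $Rf_\ast$, because $\rho_\ast f_\ast=f_\ast\rho_\ast$ and both functors preserve the relevant acyclic objects. Finally $Rf_\ast F_i\in D^{[a,b]}_{\Rc}(k_X)$, since a proper direct image preserves $\mathbb{R}$-constructibility and the amplitude. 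This gives the required functor $i\mapsto Rf_\ast F_i$.

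\textbf{Microsupport estimate.} Apply \cite[Proposition 5.4.4]{KS90} to the proper map $f$:
\[
\mS(Rf_\ast F_i)\subset f_\pi\,{}^tf'^{-1}(\mS(F_i)).
\]
Intersecting with $\pi^{-1}(U)$, every point of $\mS(Rf_\ast F_i)$ over $U$ comes from a point of $\mS(F_i)$ over $U\cap Y\subset W$, and such a point lies in $V$. Hence
\[
\mS(Rf_\ast F_i)\cap\pi^{-1}(U)\subset f_\pi\,{}^tf'^{-1}(V).
\]
This is the required estimate on $\Omega=\pi^{-1}(U)$.

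\textbf{Expected obstacle.} The delicate step is the compatibility $J\circ Rf_\ast\simeq \indlim{i}J\circ Rf_\ast$. It needs $Rf_\ast$ on $D^b(k_{Y_{sa}})$ to commute with filtrant ``$\varinjlim$'' at the level of derived categories. I would handle it through the equivalence of $D^b(k_{Y_{sa}})$ with a subcategory of $D^b(\mathrm{I}(k_Y))$, together with $Rf_\ast=Rf_{!!}$ for proper $f$ and the fact that $f_{!!}$ commutes with ``$\varinjlim$'' \cite{KS01}. I would then verify the isomorphism on cohomology objects via \cite[Lemma 2.3.1]{KSW}, as in the proof of Proposition \ref{abel}.
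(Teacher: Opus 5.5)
Your proposal is correct and follows the paper's proof: you transport the ind-presentation through $Rf_\ast$ via \cite[Proposition 2.3.2]{KSW} and bound $\mS(Rf_\ast F_i)$ by \cite[Proposition 5.4.4]{KS90}, and your handling of the localization (the neighborhoods $W$ and $U$) and of points $x\notin Y$ fills in details the paper leaves implicit. One cosmetic fix: a filtrant category must be nonempty, so for $x\notin Y$ use the one-object category with $F'=0$ and $F_i=0$ rather than an ``empty system''.
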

\proof
If $F$ is strongly regular along $V$ there exist $F^\prime \in D^b(k_{Y_{sa}})$ isomorphic to $F$ in a neighborhood of $y$, a small and filtrant category $I$ and a functor $I \to D^{[a,b]}_{\Rc}(k_Y), i \mapsto F_i$ such that $J(F^\prime) \simeq \indlim{i} J(R\rho_*F_i)$ and $\mS(F_i) \subset V$.
By \cite[Proposition 2.3.2]{KSW}, we obtain
\[
J(Rf_\ast F^\prime) \simeq \indlim{i}J(Rf_\ast F_i).
\]
By \cite[Proposition 5.4.4]{KS90} for each $i$ we have
\[
\mS(Rf_\ast F_i)= f_\pi {}^tf^{\prime-1} (\mS(F_i)).
\]
\endproof
\begin{prop}
Let $K \in D^b_{\Rc}(k_X)$. Let $V$ be a subset in $T^\ast X$. If $F \in D^b(k_{X_{sa}})$ is strongly regular along $V$, then
$K \otimes F$ is strongly regular along $\mS(K) \hat{+} V$.
\end{prop}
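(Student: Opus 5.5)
The plan mirrors the proof of Proposition \ref{sec1: pushforward}: tensor the given ind-presentation of $F$ with $K$, check that $J$ commutes with this operation, and then apply the classical tensor-product estimate of \cite{KS90} to each term.

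\textbf{Step 1 (the presentation).} Fix $x\in X$. By strong regularity of $F$ there are $F'\in D^b(k_{X_{sa}})$ isomorphic to $F$ near $x$, a small filtrant category $I$, and a functor $I\to D^{[a,b]}_{\Rc}(k_X)$, $i\mapsto F_i$, such that
\[
J(F')\simeq \indlim{i} J(R\rho_*F_i), \qquad \mS(F_i)\subset V .
\]
Since $K\otimes(\cdot)$ is local, $K\otimes F'$ is isomorphic to $K\otimes F$ on the same neighborhood of $x$. Here $K\otimes F'$ means $\rho^{-1}K\otimes F'$, which agrees with $R\rho_*K\otimes F'$ because $K$ is $\mathbb{R}$-constructible.

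\textbf{Step 2 (commuting $J$ with $K\otimes$).} I must show
\[
J(K\otimes F')\simeq \indlim{i} J(R\rho_*(K\otimes F_i)).
\]
Constructible sheaves satisfy $R\rho_*(K\otimes F_i)\simeq R\rho_*K\otimes R\rho_*F_i$. Under the identification of $D^b(k_{X_{sa}})$ with a subcategory of $D^b(\mathrm{I}k_X)$, the functor $\otimes$ commutes with filtrant ``$\varinjlim$''. This is the tensor analogue of \cite[Proposition 2.3.2]{KSW}, already used for $Rf_*$ and $f^{-1}$.

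If that reference does not directly give the statement on $J$, I will argue as in Proposition \ref{abel}. By \cite[Lemma 2.3.1]{KSW} it suffices to compare cohomologies. Then I use that $K$ has a finite resolution, locally, by finite sums of sheaves $k_U$ with $U$ subanalytic open relatively compact. For such sheaves, $k_U\otimes\cdot$ is exact and commutes with filtrant inductive limits of subanalytic sheaves. A d\'evissage via Proposition \ref{abel}-type five lemma arguments then handles a general $K$. Since $K$ and the $F_i$ have bounded amplitude, the $K\otimes F_i$ lie in some fixed $D^{[a',b']}_{\Rc}(k_X)$, so the definition applies.

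\textbf{Step 3 (microsupport estimate).} Apply \cite[Corollary 6.4.5]{KS90}, i.e. $\mS(K\otimes F_i)\subset \mS(K)\hat{+}\mS(F_i)$. This needs the non-characteristic hypothesis $\mS(K)^a\cap\mS(F_i)\subset T^*_XX$. With $\mS(F_i)\subset V$ it gives
\[
\mS(K\otimes F_i)\subset \mS(K)\hat{+}V .
\]
This is where I expect the real difficulty: the classical estimate with $\hat{+}$ needs the non-characteristic condition, so either this hypothesis is implicit (for instance $\mS(K)^a\cap V\subset T^*_XX$), or one uses the general estimate \cite[Proposition 6.4.4]{KS90} with $\hat{+}$ understood as in \cite{KS90}. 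I will use the form involving $\hat{+}$ that holds without the hypothesis, reading the statement's $\hat{+}$ in that sense. Granting this, the presentation of Steps 1--2 together with this bound shows that $K\otimes F$ is strongly regular along $\mS(K)\hat{+}V$ at every $x$, which proves the proposition.
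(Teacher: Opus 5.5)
Your proposal is correct and matches the paper's proof, which consists of exactly these two ingredients: transport the ind-presentation of $F$ through $K\otimes(\cdot)$ using \cite[Proposition 2.3.2]{KSW}, then bound each $\mS(K\otimes F_i)$ by \cite[Corollary 6.4.5]{KS90}. Your hesitation in Step 3 is unfounded, since Corollary 6.4.5 of \cite{KS90} is itself the hypothesis-free estimate $\mS(K\otimes F_i)\subset\mS(K)\hat{+}\mS(F_i)$ and monotonicity of $\hat{+}$ then gives the bound by $\mS(K)\hat{+}V$; the non-characteristic condition is needed only for the cruder estimate with $+$ in Chapter V of \cite{KS90}.
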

\proof
The proof is similar to Proposition \ref{sec1: pushforward}. Apply \cite[Proposition 2.3.2]{KSW} and \cite[Corollary 6.4.5]{KS90}.
\endproof
\begin{prop}
Let $f: Y \to X$ be a morphism. Let $V$ be a subset in $T^\ast X$.
If $F$ is strongly regular along $V$ then $f^{-1}F$ and   $f^{!}F$ are strongly regular along $f^\sharp(V)$.
\end{prop}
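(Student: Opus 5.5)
The plan is to follow the pattern of Proposition \ref{sec1: pushforward}: first transport the ind-presentation of $F$ through the functor in question, then apply the classical microsupport estimate term by term. Fix $y\in Y$ and set $x=f(y)$. Since $F$ is strongly regular along $V$, there are an open neighborhood $U$ of $x$, an object $F'\in D^b(k_{X_{sa}})$ with $F'|_U\simeq F|_U$, a small filtrant category $I$, and a functor $i\mapsto F_i\in D^{[a,b]}_{\Rc}(k_X)$ such that $J(F')\simeq\indlim{i}J(R\rho_*F_i)$ and $\mS(F_i)\cap\pi^{-1}(U)\subset V$. Both $f^{-1}$ and $f^!$ are local on the target, so $f^{-1}F'$ (resp.\ $f^!F'$) is isomorphic to $f^{-1}F$ (resp.\ $f^!F$) on the neighborhood $f^{-1}(U)$ of $y$. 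Hence $f^{-1}F'$ and $f^!F'$ are legitimate replacements in the definition.

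The first step is the commutation of $J$ with the inverse images. By \cite[Proposition 2.3.2]{KSW}, which the paper already used for $f^{-1}$ in the proof of Theorem \ref{reg_sol}, together with the compatibility $f^{-1}R\rho_*\simeq R\rho_*f^{-1}$ on $\mathbb{R}$-constructible objects, I would obtain
\[
J(f^{-1}F')\simeq\indlim{i}J(R\rho_*f^{-1}F_i).
\]
The family $f^{-1}F_i$ stays in a fixed range $D^{[a,b]}_{\Rc}(k_Y)$ because $f^{-1}$ is exact.

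For $f^!$ I would factor $f$ as the graph embedding $Y\hookrightarrow Y\times X$ followed by the projection $Y\times X\to X$. For the smooth projection $p$, one has $p^!\simeq p^{-1}\otimes\ori_{Y\times X/X}[\dim Y]$, which is a shift of $p^{-1}$ twisted by a locally constant sheaf. The inverse-image case therefore applies verbatim to $p$. For a closed embedding $j$, I would use $j^!\simeq R\intHom(j_*k_Y,\cdot)$ composed with the restriction. The step I expect to be the main obstacle is commuting $J$ (equivalently, the cohomologies, via \cite[Lemma 2.3.1]{KSW}) with $\indlim{}$ through $R\intHom(K,\cdot)$. My first attempt would use that $K=j_*k_Y$ is $\mathbb{R}$-constructible, so $R\intHom(K,\cdot)$ commutes with filtrant ``$\varinjlim$'' of ind-sheaves by \cite{KS01}. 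The amplitude of the $j^!F_i$ is then bounded, since $j^!$ has finite cohomological amplitude bounded by the codimension.

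The second step is the microsupport estimate, term by term. By \cite[Proposition 5.4.13]{KS90} and \cite[Corollary 6.4.4]{KS90}, for each $i$ we have
\[
\mS(f^{-1}F_i)\subset f^\sharp(\mS(F_i)),\qquad \mS(f^!F_i)\subset f^\sharp(\mS(F_i)),
\]
and $f^\sharp$ is local in the base. Since $\mS(F_i)\cap\pi^{-1}(U)\subset V$, the definition of $f^\sharp$ as a set of limit points over $\pi^{-1}(U)$ gives $f^\sharp(\mS(F_i))\cap\pi^{-1}(f^{-1}U)\subset f^\sharp(V)$. So the estimate holds over the neighborhood $f^{-1}(U)$ of $y$, which yields strong regularity of $f^{-1}F$ and $f^!F$ along $f^\sharp(V)$.

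One caveat: the bounds of \cite{KS90} via $f^\sharp$ hold without non-characteristic assumptions, but $f^\sharp(V)$ may not be closed. I would therefore read the conclusion with $f^\sharp(V)$ as a set, exactly as stated.
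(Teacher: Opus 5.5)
Your proposal is correct and is essentially the paper's argument. The paper only says the proof is as for Proposition \ref{sec1: pushforward}, citing \cite[Proposition 2.3.2]{KSW} to commute $J$ with $f^{-1}$ and $f^{!}$ and \cite[Corollary 6.4.4]{KS90} for $\mS(f^{-1}F_i),\mS(f^{!}F_i)\subset f^\sharp(\mS(F_i))$; your graph factorization of $f^{!}$ and your bookkeeping over $\pi^{-1}(f^{-1}U)$ simply unpack those citations, and your closing caveat is unnecessary because $f^\sharp(V)$ is defined through a normal cone (equivalently, by limits of sequences) and is therefore always closed.
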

\proof
The proof is similar to Proposition \ref{sec1: pushforward}. Apply \cite[Proposition 2.3.2]{KSW} and \cite[Corollary 6.4.4]{KS90}.
\endproof
\section{An estimate of supports and microsupports of multi-microlocalizations for strongly regular subanalytic sheaves}\label{Sec: estimate}
In this section, we study an estimation of supports of multi-microlocalizations. Let $\chi$ be a family of submanifolds $\{M_1, \dots, M_\ell\}$ of a manifold $X$ which satisfies the conditions H1, H2 and H3 of \cite{HPY}.
\subsection{An estimate of supports}\label{sec:supports}

\begin{theo}\label{estimate1}
{\it
Let $V$ be a subset in $T^\ast X$. Assume $F \in D^b(k_{X_{sa}})$ is strongly regular along $V$. Then }
\[
\mu^{sa}_\chi(F)_{C_{\chi^\ast}(V) \cap S^\ast_\chi} \xleftarrow{\sim} \mu_\chi^{sa} (F).
\]
\[
\supp(\rho^{-1}\mu_{\chi}^{sa} F) \subset C_{\chi^\ast}(V) \cap S^\ast_\chi.
\]

\end{theo}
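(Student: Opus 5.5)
Since the statement is local on $S^\ast_\chi$ and both assertions can be checked over a neighborhood of each $x \in X$, the plan is to reduce to the classical (constructible) case by means of the definition of strong regularity. We fix $x$ and choose, as in the definition, $F' \simeq F$ near $x$, a small filtrant category $I$ and a functor $I \to D^{[a,b]}_{\Rc}(k_X)$, $i \mapsto F_i$, with $J(F') \simeq \indlim{i} J(R\rho_\ast F_i)$ and $\mS(F_i) \subset V$ (on $\pi^{-1}(U)$). Multi-microlocalization $\mu^{sa}_\chi$ is built in \cite{HPY} from operations (inverse images, $R\Gamma$ of subanalytic opens, proper/direct images along the multi-normal deformation, Fourier–Sato type transforms) that commute with filtrant ``$\varinjlim$'' of $\mathbb{R}$-constructible objects. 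Using \cite[Proposition 2.3.2]{KSW} for each such operation, as in the proofs of the functorial propositions of Section \ref{Sec: strong}, we obtain
\[
J(\mu^{sa}_\chi F) \simeq \indlim{i} J(\mu^{sa}_\chi R\rho_\ast F_i) \simeq \indlim{i} J(R\rho_\ast \mu_\chi F_i),
\]
where the last step uses that for $\mathbb{R}$-constructible $G$ one has $\mu^{sa}_\chi(R\rho_\ast G) \simeq R\rho_\ast \mu_\chi(G)$, with $\mu_\chi$ the classical multi-microlocalization of \cite{HPY}.

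Next we apply the classical estimate of \cite{HPY} for each $i$: since $\mS(F_i) \subset V$, the support of $\mu_\chi(F_i)$ is contained in $C_{\chi^\ast}(\mS(F_i)) \cap S^\ast_\chi \subset C_{\chi^\ast}(V) \cap S^\ast_\chi$. Writing $Z := C_{\chi^\ast}(V) \cap S^\ast_\chi$, which is closed, we get $(\mu_\chi F_i)_Z \xleftarrow{\sim} \mu_\chi F_i$ functorially in $i$. Since $(\cdot)_Z = \cdot \otimes k_Z$ commutes with $R\rho_\ast$ on constructible objects and with filtrant ``$\varinjlim$'' (again \cite[Proposition 2.3.2]{KSW}, or the tensor proposition above), passing to the limit and using that $J$ is conservative on such limits (\cite[Lemma 2.3.1]{KSW}, checking on each cohomology $H^k$) gives the isomorphism $\mu^{sa}_\chi(F)_Z \xleftarrow{\sim} \mu^{sa}_\chi(F)$. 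The support statement then follows by applying $\rho^{-1}$, which commutes with filtrant limits and satisfies $\rho^{-1}R\rho_\ast \simeq \mathrm{id}$. Thus $\rho^{-1}\mu^{sa}_\chi F \simeq \varinjlim_i \mu_\chi F_i$, and each term is supported in $Z$. Alternatively, it follows directly from the first isomorphism, since $\rho^{-1}(G_Z) \simeq (\rho^{-1}G)_Z$.

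The main obstacle is the first step: justifying that $\mu^{sa}_\chi$ commutes with the ind-limit representation, i.e.\ that each constituent functor in the \cite{HPY} construction preserves $\indlim{}$ of $\mathbb{R}$-constructible sheaves and agrees with the classical one on $R\rho_\ast$ of constructible objects. The delicate points are the direct images, which are not proper on the whole deformation space, and the sectional functors $R\Gamma_U$ for open $U$. The plan is to handle these using the conic/subanalytic structure: direct images from $\mathbb{R}^+$-conic objects reduce to proper ones after compactifying fibers, and $R\Gamma_U$ for subanalytic $U$ is $R\intHom(k_U,\cdot)$ with $k_U$ constructible, which commutes with filtrant limits in $D^b(k_{X_{sa}})$ by \cite{KS01}. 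A subsidiary point is the locality: the isomorphism $F' \simeq F$ only holds near $x$, so both claims must be reduced to statements over $\pi^{-1}(U)$, which is legitimate because $\mu^{sa}_\chi$ is local on $X$.
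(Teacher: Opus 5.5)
Your proposal is correct and follows essentially the paper's proof. You write $J(\mu^{sa}_\chi F')\simeq \indlim{i} J(R\rho_\ast\mu_\chi F_i)$, apply the classical estimate \cite[Theorem 3.6]{HPY} termwise to get $(\mu_\chi F_i)_Z\xleftarrow{\sim}\mu_\chi F_i$, and conclude by conservativity of $J$; the paper simply cites \cite[Theorem 3.3]{KS01mic} for the first step instead of checking it functor by functor. The only real difference is the support statement, which you deduce directly from the first isomorphism (or from $\rho^{-1}$ commuting with filtrant colimits), whereas the paper gives a separate vanishing argument on a multi-conic open neighbourhood of each $p\notin Z$; your deduction is the more economical of the two.
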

\proof
Set $Z:= C_{\chi^\ast}(V) \cap S^\ast_\chi $.
By the strong regularity, there exist $F^\prime$ isomorphic to $F$ in a neighborhood $U$ of $x$, a small and filtrant category $I$ and a functor $I \to D^{[a,b]}_{\Rc}(k_X), i \mapsto F_i$ such that $J(F^\prime) \simeq \indlim{i} J(R\rho_*F_i)$ and $\mS(F_i) \cap \Omega \subset V$.
By \cite[Theorem 3.3]{KS01mic},
\begin{eqnarray*}
J(\mu_\chi^{sa} (F^\prime)_Z) &\simeq& \indlim{i} J (R\rho_*\mu_\chi (F_i) \otimes k_{Z})\\
&\simeq& \indlim{i} J (R\rho_*\mu_\chi (F_i))\\
&\simeq&J( \mu_\chi^{sa} (F^\prime)).
\end{eqnarray*}
The second isomorphism follows from \cite[Theorem 3.6]{HPY}.
By \cite[Corollary 15.4.4]{KS06}, $J$ is conservative. Thus $\mu_\chi^{sa} (F^\prime)_Z \simeq \mu_\chi^{sa} (F^\prime)$. Therefore for any $x \in X$, there is a neighborhood $U$ so that
\[
\mu^{sa}_\chi(F)_{C_{\chi^\ast}(V) \cap S^\ast_\chi} \simeq \mu_\chi^{sa} (F)
\]
on $\pi^{-1}(U)$.

Let $p\not \in C_{\chi^\ast}(V) \cap S^\ast_\chi$.
By the strong regularity along $V$, there exist a small filtrant system $\{F_i\}$ in $C^{[a,b]}(\text{Mod}^c_{\Rc}(k_X))$ with $\mS(F_i) \subset V$ such that $F$ is quasi-isomorphic to $\varinjlim_i \rho_\ast F_i$ on a neighborhood $W$ of $\pi(p)$. And also we find a multi-conic open neighborhood $U$ along $\chi$ of $p$ such that $C_{\chi^\ast}(V) \cap U =\emptyset$ and $U\subset \pi^{-1}(W)$.
 We have:
\[
H^k\mu^{sa}_\chi(F_W) \simeq \varinjlim_i \rho_\ast H^k  \mu_\chi F_{iW},
\]
for any $k \in \mathbb{Z}$.
Then $\mu^{sa}_\chi(F)|_{U}=0$ since 
$\supp(\mu_\chi F_{i}) \subset C_{\chi^\ast}(\mS(F_i)) \cap S^\ast_\chi$ by \cite[Theorem 3.6]{HPY} and $C_{\chi^\ast}(\mS(F_i)) \subset C_{\chi^\ast}(V)$.
\endproof
\begin{prop}\label{estimate2}
{\it
Let $G_1,\dots,G_\ell \in D^b_{\Rc}(k_X)$, $F\in D^b(k_{X_{sa}})$. Assume $F$ is strongly regular along $V$. 
Set $Z:=C_{\hat{\chi}^\ast}(\mS(G_1)^a \times\dots \times \mS(G_\ell)^a\times i_{\Delta\pi}{}^ti_{\Delta}^{\prime-1}(V))\cap S_{\hat{\chi}}^\ast$, here 
$i_{\Delta\pi}{}^ti_{\Delta}^{\prime-1}(V)=\{(x_1,\dots,x_n; \xi_1, \dots, \xi_n); x_1=\cdots =x_n, \xi_1+ \cdots +\xi_n \in V\}$ in local coordinates. Then we have
\[
\mu hom^{sa}_{\hat{\chi}}(G_1, \dots,G_\ell ; F)_Z \simeq \mu hom^{sa}_{\hat{\chi}}(G_1, \dots,G_\ell ; F).
\]
}
\end{prop}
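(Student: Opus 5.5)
The plan is to reduce Proposition \ref{estimate2} to Theorem \ref{estimate1}, applied to a suitable family on the product $X^{\ell+1}$. Recall from \cite{HPY} that $\mu hom^{sa}_{\hat{\chi}}(G_1,\dots,G_\ell;F)$ is defined as the multi-microlocalization along the family $\hat{\chi}$ on $X\times\cdots\times X$ (the diagonal-type submanifolds built from $\chi$) of the object
\[
\RintHom(q_1^{-1}G_1\otimes\cdots\otimes q_\ell^{-1}G_\ell,\ q_{\ell+1}^{!}F),
\]
up to the usual identifications with $D'$ and $\boxtimes$. Here $q_j$ denotes the $j$-th projection. The first step is therefore to show that this kernel object is strongly regular along
\[
W:=\mS(G_1)^a\times\cdots\times\mS(G_\ell)^a\times i_{\Delta\pi}{}^ti_{\Delta}^{\prime-1}(V),
\]
or at least along a set whose $C_{\hat\chi^\ast}$ is contained in $C_{\hat{\chi}^\ast}(W)$.

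To get this strong regularity, I will not invoke the functorial propositions one at a time. Instead I will work directly with the defining ind-presentation, as in the proof of Proposition \ref{sec1: pushforward}. Near a point, write $J(F)\simeq\indlim{i}J(R\rho_*F_i)$ with $F_i\in D^{[a,b]}_{\Rc}(k_X)$ and $\mS(F_i)\subset V$. Since each $G_j$ is $\mathbb{R}$-constructible, $D'G_j\boxtimes\cdots$ is $\mathbb{R}$-constructible, and $\RintHom(K,\cdot)$ with $K$ constructible commutes with $\rho_*$. Together with $\boxtimes$ and inverse images commuting with filtrant ``$\varinjlim$'' (\cite[Proposition 2.3.2]{KSW}), this gives
\[
J\big(\RintHom(\boxtimes_j G_j, q^!F)\big)\simeq\indlim{i}J\big(R\rho_*\RintHom(\boxtimes_j G_j,q^!F_i)\big).
\]
For each $i$, the classical estimates \cite[Proposition 5.4.2, Corollary 6.4.5]{KS90} bound $\mS$ of the $i$-th term by $\mS(G_1)^a\times\cdots\times\mS(G_\ell)^a\times\mS(F_i)$ after the appropriate pull-back to the diagonal. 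This is the source of the factor $i_{\Delta\pi}{}^ti_{\Delta}^{\prime-1}(V)$. The pull-back estimate uses that the relevant map is a diagonal or closed embedding, as in Proposition \ref{sec1: pushforward}, so the estimate is an equality on the pushed-forward side.

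With strong regularity along $W$ established, the second step is to apply Theorem \ref{estimate1} to the family $\hat\chi$ on the product. This requires checking that $\hat\chi$ satisfies H1--H3 of \cite{HPY}, which I expect is stated there. The output is exactly the isomorphism $(\cdot)_Z\simeq(\cdot)$ with $Z=C_{\hat\chi^\ast}(W)\cap S^\ast_{\hat\chi}$. Since the statement is local, it suffices to prove it on $\pi^{-1}(U)$ for small $U$. The local isomorphisms glue because the morphism $\mu hom^{sa}_{Z}\to\mu hom^{sa}$ is canonical, so being an isomorphism is a local property.

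The main obstacle is the first step. There are two difficulties:
\begin{itemize}
\item One must check that the constructions $\RintHom(K,\cdot)$ and $q^!$ commute with the ind-limit. This relies on the compact-support and constructibility hypotheses on $K$.
\item One must check that the microsupport estimate for $\RintHom(\boxtimes D'G_j,\,\cdot\,)$ really produces the antipodal sets $\mS(G_j)^a$ combined with $V$ pulled back along the diagonal in the stated form.
\end{itemize}
If the general $\RintHom$ estimate requires a non-characteristic hypothesis, I would first try to avoid it by rewriting via $\boxtimes$, which has no such hypothesis. After that, only the pull-back by the diagonal embedding remains, and that is handled exactly as in Proposition \ref{sec1: pushforward}.
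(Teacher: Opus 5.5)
Your plan matches the paper's proof. The paper likewise shows that the kernel of $\mu hom^{sa}_{\hat\chi}$ is strongly regular along $\mS(G_1)^a\times\cdots\times\mS(G_\ell)^a\times i_{\Delta\pi}{}^ti_{\Delta}^{\prime-1}(V)$, by passing the functors through the ind-presentation of $F$ and applying the classical microsupport estimates termwise, and then invokes Theorem~\ref{estimate1} for $\hat\chi$. The one correction is that the kernel is $\RintHom(p_2^{-1}(G_1\boxtimes\cdots\boxtimes G_\ell), p_1^{!}Ri_{\Delta\ast}F)$, not an object on $X^{\ell+1}$; the diagonal factor therefore comes from the direct image $Ri_{\Delta\ast}F_i$ under the closed embedding $i_\Delta$ (as in Proposition~\ref{sec1: pushforward}), not from a pull-back to the diagonal.
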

\proof
By \cite[Theorem 3.3]{KS01mic} and \cite[Proposition 5.4.1, Proposition 5.4.2, Proposition 5.4.4]{KS90}, $\RintHom(p_2^{-1}(G_1\boxtimes \cdots \boxtimes G_\ell), p_{1}^{!}Ri_{\Delta \ast}F))$ is strongly regular along $\mS(G_1)^a \times \cdots \times \mS(G_\ell)^a \times i_{\Delta\pi}{}^ti_{\Delta}^{\prime-1}(V)$. Thus by Theorem \ref{estimate1} the result follows.
\endproof
\begin{lem}\label{Sec2: formula1}
{\it
Let $A$ be a subset in $T^\ast X$. We have:
\[
C_{{\chi}^\ast}(A )\cap S_{{\chi}}^\ast \subset \overline{A}^{{\chi}} \cap S_\chi^\ast.
\]
Here for a subset $D$ in $T^\ast X$ we define $(x^{(0)},0, \dots, 0; 0, \xi^{(1)}, \dots, \xi^{(\ell)}) \in \overline{D}^{{\chi}}$ if and only if there exist sequences $\{(t_{1,n}, \dots, t_{\ell,n})\} \subset (\mathbb{R}^+)^\ell$ and $\{(x^{(0)}_n,\dots, x^{(\ell)}_n;  \xi^{(0)}, \dots,\xi^{(\ell)})\} \subset D$ such that $(t_{\hat{J}_0,n} x^{(0)}_n, t_{\hat{J}^c_{1},k}\xi^{(1)}_k, \dots,t_{\hat{J}^c_{\ell},k}\xi^{(\ell)}_k) \to (x^{(0)}, \xi^{(1)}, \xi^{(2)}, \cdots, \xi^{(\ell)})$ and $t_{j,n} \to +\infty$, $j= 1, \dots, \ell$.
}
\end{lem}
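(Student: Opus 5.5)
We argue directly from the definition of the multi-normal cone $C_{\chi^\ast}(A)$ in \cite{HPY}. In local coordinates adapted to $\chi$ (conditions H1–H3), $X=\mathbb{R}^{n_0}\times\mathbb{R}^{n_1}\times\cdots\times\mathbb{R}^{n_\ell}$ with coordinates $(x^{(0)},\dots,x^{(\ell)})$. Each $M_j$ is defined by the vanishing of the blocks $x^{(k)}$ with $k\in \hat{J}_j$. The cotangent bundle is handled through the dual family $\chi^\ast$ of submanifolds of $T^\ast X$. The cone $C_{\chi^\ast}(A)$ is the closure of the image of $A\times(\mathbb{R}^+)^\ell$ under the multi-normal deformation along $\chi^\ast$, intersected with the zero fibre.

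Concretely, a point $q\in C_{\chi^\ast}(A)$ is characterized by sequences $t_n=(t_{1,n},\dots,t_{\ell,n})$ and points $a_n\in A$ with $t_{j,n}\to+\infty$ such that, after multiplying the coordinates of $a_n$ by the appropriate products $t_{\hat{J},n}$ of the scaling parameters, the rescaled points converge to $q$. The rescaling multiplies $x$-blocks and $\xi$-blocks according to whether the block is "normal" to the relevant member of $\chi^\ast$. The first step is to write this sequential characterization explicitly, as in \cite[Section 1]{HPY}, adapting \cite[Lemma 4.1.2]{KS90} to the multi-parameter setting.

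Next we impose $q\in S^\ast_\chi$. By definition this forces $q=(x^{(0)},0,\dots,0;0,\xi^{(1)},\dots,\xi^{(\ell)})$: the normal position coordinates and the tangential covector $\xi^{(0)}$ vanish. We then read off which components of the convergence statement survive. The surviving conditions are exactly the following three.
\begin{enumerate}
\item $t_{\hat{J}_0,n}x^{(0)}_n\to x^{(0)}$.
\item $t_{\hat{J}^c_k,n}\xi^{(k)}_n\to \xi^{(k)}$ for $k=1,\dots,\ell$.
\item $t_{j,n}\to+\infty$ for $j=1,\dots,\ell$.
\end{enumerate}
The other coordinates only have to converge to $0$. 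The $\overline{A}^{\chi}$ condition imposes nothing on those coordinates, so forgetting them keeps the same sequences and proves the inclusion.

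The main obstacle is bookkeeping rather than analysis. The index sets $\hat{J}_0$ and $\hat{J}^c_k$ in the definition of $\overline{D}^\chi$ must be matched exactly with the weights that the multi-normal deformation of \cite{HPY} attaches to each block of $T^\ast X$ for the dual family $\chi^\ast$. I would first verify this in the case $\ell=1$, where it reduces to the classical description of $C_{T^\ast_MX}(A)\cap T^\ast_MX$ in \cite{KS90}. I would then treat the general case block by block using the partial order among the $M_j$ given by H1–H3. That order determines, for each block index $k$, which parameters $t_j$ act on $x^{(k)}$ and, dually, on $\xi^{(k)}$. Once this matching is checked, the inclusion is immediate, with no further estimate needed.
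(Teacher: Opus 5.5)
Your proposal is correct and follows essentially the paper's argument. The paper cites the sequential characterization of $C_{\chi^\ast}(A)$ directly as \cite[Lemma 1.4]{HPY}, so you can skip re-deriving it (including the $\ell=1$ check and the block-by-block matching of weights). From there, for a limit point in $S^\ast_\chi$, it keeps only the components $t_{\hat{J}_0,n}x^{(0)}_n$ and $t_{\hat{J}^c_k,n}\xi^{(k)}_n$ and discards the others, exactly as you propose.
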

\proof
By \cite[Lemma 1.4]{HPY}, $(x^{(0)},\dots, x^{(\ell)}; \xi^{(0)}, \dots,\xi^{(\ell)})\in C_{\chi^\ast}(A)$ if and only if there exist sequences $$\{(x^{(0)}_k,\dots, x^{(\ell)}_k; \xi^{(0)}_k, \dots,\xi^{(\ell)}_k)\} \subset A$$ and $(t_{1,k}, \dots ,t_{n,k}) \subset (\mathbb{R}^+)^\ell$ such that
\begin{eqnarray}
(t_{\hat{J}_{0},k}x^{(0)}_{k},\dots, t_{\hat{J}_{\ell},k}x^{(\ell)}_{k}; t_{\hat{J}^c_{0},k}\xi^{(0)}_k, \dots,t_{\hat{J}^c_{\ell},k}\xi^{(\ell)}_k) \to (x^{(0)},\dots, x^{(\ell)}; \xi^{(0)}, \dots,\xi^{(\ell)})
\end{eqnarray}
and $t_{1,n},\dots, t_{\ell,n} \to \infty$. This gives the result.
\endproof
\begin{cor}
{\it 
Let $G_1,\dots, G_\ell \in D^b_{\Rc}(k_X)$, $F\in D^b(k_{X_{sa}})$. Assume $F$ is strongly regular along $V$. 
Suppose that $\mS(G_1)\times \dots \times \mS(G_\ell)\cap i_{\Delta\pi}{}^ti_{\Delta}^{\prime-1}(V) \subset T_X^\ast X\times \cdots \times T_X^\ast X$. Then
\[
D^\prime G_1 \otimes \cdots \otimes D^\prime G_\ell \otimes  F \xrightarrow{\sim} \RintHom(G_1\otimes \cdots \otimes G_\ell , F).
\]
}
\end{cor}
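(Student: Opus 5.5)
The corollary is the subanalytic analogue of \cite[Proposition 5.4.14]{KS90}. I would obtain it by reducing to the classical statement termwise on an inductive system, as in the proofs of Proposition \ref{sec1: pushforward} and Theorem \ref{estimate1}.

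The question is local, so fix $x\in X$. By strong regularity there are $F'$ isomorphic to $F$ near $x$, a small filtrant category $I$, and a functor $i\mapsto F_i\in D^{[a,b]}_{\Rc}(k_X)$ with $J(F')\simeq\indlim{i}J(R\rho_*F_i)$ and $\mS(F_i)\subset V$. Set $G:=G_1\otimes\cdots\otimes G_\ell$. This is $\mathbb{R}$-constructible, and by \cite[Proposition 5.4.1, Corollary 6.4.5]{KS90} we have $\mS(G)\subset\mS(G_1)\hat{+}\cdots\hat{+}\mS(G_\ell)$. The same bound holds for $D'G\simeq D'G_1\otimes\cdots\otimes D'G_\ell$ with antipodal signs, since $G_j$ is constructible so $D'G_j$ is as well.

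The key step is the classical isomorphism for each $i$. The hypothesis says that if $(x;\xi_j)\in\mS(G_j)$ and $\xi_1+\cdots+\xi_\ell\in V$, then all $\xi_j=0$. I need the transversality condition $\mS(G)^a\cap\mS(F_i)\subset T^*_XX$ (with the sign convention of \cite[Proposition 5.4.14]{KS90}). Since $\mS(F_i)\subset V$, this reduces to showing that a covector of the form $\xi_1+\cdots+\xi_\ell$ lying in $V$ must vanish. The subtle point is that $\mS(G)$ is only contained in the closure-type sum $\hat{+}$, not the naive sum, so limit points must be handled. I would treat this with the hypothesis applied to the $\ell$-fold product, and if necessary work directly with $p_2^{-1}(G_1\boxtimes\cdots\boxtimes G_\ell)$ and $i_\Delta$ as in Proposition \ref{estimate2}, where the set $i_{\Delta\pi}{}^ti_\Delta^{\prime-1}(V)$ appears exactly. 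Granting this, \cite[Proposition 5.4.14]{KS90} gives $D'G\otimes F_i\xrightarrow{\sim}\RintHom(G,F_i)$ for each $i$, functorially in $i$.

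To pass to the limit, note that $R\rho_*$ commutes with $\otimes$ and $\RintHom$ by an $\mathbb{R}$-constructible object, and by \cite[Proposition 2.3.2]{KSW} both $J(K\otimes\cdot)$ and $J(\RintHom(K,\cdot))$ commute with $\indlim{}$ for $K\in D^b_{\Rc}(k_X)$. The latter uses that $K$ is locally compactly generated, so $\RintHom(K,\cdot)$ is computed as a tensor with $D'K$ up to the duality. This gives
\[
J(D'G\otimes F')\simeq\indlim{i}J(R\rho_*(D'G\otimes F_i))\xrightarrow{\sim}\indlim{i}J(R\rho_*\RintHom(G,F_i))\simeq J(\RintHom(G,F')).
\]
Compatibility with the canonical morphism follows from naturality. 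Since $J$ is conservative by \cite[Corollary 15.4.4]{KS06}, the canonical morphism is an isomorphism near $x$.

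I expect the main obstacle to be the commutation of $\RintHom(G,\cdot)$ with filtrant ind-limits. I would handle it either by writing $\RintHom(G,\cdot)\simeq\RintHom(G,\cdot)\otimes$-type expressions via the formula for constructible $G$, or by the alternative route of Proposition \ref{estimate2} and Theorem \ref{estimate1} with all $M_j=X$. In that alternative, the cone of the morphism is identified with $\mu hom$ supported off the zero section, and it vanishes by the estimate $Z\subset T^*_XX$ forced by the hypothesis.
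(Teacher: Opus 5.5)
\textbf{Comparison with the paper's route.} Your main argument differs from the paper's, which never works termwise. The paper uses the distinguished triangle of \cite[Corollary 3.12]{S}. Its first arrow is the morphism in question, and its third term is built from $\mu hom^{sa}_{\hat{\chi}}(G_1,\dots,G_\ell;F)$ off the zero section. Proposition \ref{estimate2} puts the support of this $\mu hom$ in $Z$, and Lemma \ref{Sec2: formula1} together with the hypothesis gives $Z\cap\dot{S}^\ast_{\hat{\chi}}=\emptyset$, so the third term vanishes. Your closing ``alternative'' is this argument, except that the relevant family is the $\hat{\chi}$ of Proposition \ref{estimate2}, not $M_j=X$.

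Your route (classical isomorphism for each $F_i$, commutation with $\indlim{}$, conservativity of $J$) is more elementary. It needs neither multi-microlocal $\mu hom$, nor the triangle of \cite{S}, nor the $\chi$-closure lemma, and it is a general transfer principle for strongly regular objects. The paper's route, in exchange, never computes $\mS(G_1\otimes\cdots\otimes G_\ell)$ or $D'(G_1\otimes\cdots\otimes G_\ell)$, because it works with $G_1\boxtimes\cdots\boxtimes G_\ell$ and $i_\Delta$, where $i_{\Delta\pi}{}^ti_{\Delta}^{\prime-1}(V)$ appears directly.

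\textbf{Three holes, all repairable.}

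(1) The step you ``grant''. The classical isomorphism $D'G\otimes F_i\to\RintHom(G,F_i)$ needs $\mS(G)\cap\mS(F_i)\subset T^\ast_XX$ with no antipode, since it comes from $\supp\mu hom(G,F_i)\subset\mS(G)\cap\mS(F_i)$. The antipodal version does not follow from the hypothesis unless $V$ is symmetric. The missing observation is this: if $(x;\eta)\in\mS(G)\cap\mS(F_i)$, then $x\in\supp F_i$, so $(x;0)\in\mS(F_i)\subset V$. The hypothesis with $\xi_1+\cdots+\xi_\ell=0$ then says $i_\Delta\colon X\to X^\ell$ is non-characteristic for $\mS(G_1)\times\cdots\times\mS(G_\ell)$ at $x$. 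Bound $\mS(G)$ by $i_\Delta^\sharp(\mS(G_1)\times\cdots\times\mS(G_\ell))$ rather than by an iterated $\hat{+}$. Non-characteristicity excludes unbounded sequences (normalize and pass to the limit), so $\eta=\xi_1+\cdots+\xi_\ell$ with $(x;\xi_j)\in\mS(G_j)$. Since $(x;\eta)\in V$, the hypothesis forces every $\xi_j=0$.

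(2) The identification $D'G\simeq D'G_1\otimes\cdots\otimes D'G_\ell$ is not automatic. For $G_1=k_{[0,\infty)}$ and $G_2=k_{(-\infty,0]}$ on $\mathbb{R}$, the left side is $k_{\{0\}}[-1]$ and the right side is $0$. It holds where $i_\Delta$ is non-characteristic, hence, by (1), on a neighbourhood of $\supp F_i$. That suffices after tensoring with $F_i$, so make this comparison termwise, before passing to the limit.

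(3) Your justification that $J(\RintHom(G,\cdot))$ commutes with $\indlim{}$ is circular. ``$\RintHom(K,\cdot)$ is a tensor with $D'K$'' is exactly what is being proved, and it is false in general. Use adjunction instead. For $H\in D^b(\mathrm{Mod}^c_{\Rc}(k_X))$ one has $J(\RintHom(G,F'))(H)\simeq J(F')(H\otimes G)$, and $H\otimes G$ is again compactly supported and $\mathbb{R}$-constructible. Hence $J(F')\simeq\indlim{i}J(R\rho_\ast F_i)$ gives $J(\RintHom(G,F'))\simeq\indlim{i}J(\RintHom(G,R\rho_\ast F_i))$ directly. Finally, $\RintHom(G,R\rho_\ast F_i)\simeq R\rho_\ast\RintHom(G,F_i)$ by $\rho^{-1}\dashv R\rho_\ast$.

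With these repairs your argument goes through. It is then at the same level of care as the paper regarding compatibility of the isomorphisms with the canonical morphism.
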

\proof
By Lemma \ref{Sec2: formula1}, by the assumption we have:
\begin{eqnarray*}
&{}&C_{\hat{\chi}^\ast}(\mS(G_1)^a \times\dots \times \mS(G_\ell)^a\times i_{\Delta\pi}{}^ti_{\Delta}^{\prime-1}(V))\cap \dot{S}_{\hat{\chi}}^\ast \\
&\subset& \overline{\mS(G_1)\times \dots \times \mS(G_\ell)\cap i_{\Delta\pi}{}^ti_{\Delta}^{\prime-1}(V)}^{{\chi}} \cap \dot{S}_{\hat{\chi}}^\ast\\
&=& \emptyset.
\end{eqnarray*}
Finally use the distinguished triangle in \cite[Corollary 3.12]{S} and Proposition \ref{estimate2}.
\endproof
\begin{cor}
{\it
Assume $F\in D^b(k_{X_{sa}})$ is strongly regular along $V$. Let $U$ and $U^\prime$ be open multi-conic subsets in $S_\chi$ with convex fibers, $U \subset U^\prime$. If $C_{\chi^\ast}(V) \cap U^{\circ a} \subset U^{\prime \circ a}$, then
\[
R\Gamma(U^\prime; \nu_{\chi}(F)) \to R\Gamma(U ;\nu_{\chi}(F))
\]
is an isomorphism.
}
\end{cor}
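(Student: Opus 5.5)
The plan is to mimic the classical proof of \cite[Proposition 5.2.3 / Corollary 5.2.4]{KS90}, adapted to the multi-specialization of \cite{HPY}. First reduce the statement about sections of $\nu_\chi(F)$ to a statement about the Fourier--Sato type transform, i.e.\ about $\mu_\chi^{sa}(F)$. Because $U$ and $U^\prime$ are open, multi-conic, and have convex fibers, the multi-conic analogue of \cite[Theorem 3.7.9, Lemma 3.7.10]{KS90} in \cite{HPY} gives natural isomorphisms
\[
R\Gamma(U;\nu_\chi(F)) \simeq R\Gamma_{U^{\circ a}}(S^\ast_\chi;\mu_\chi(F))\otimes \ori,
\]
and similarly for $U^\prime$, up to the same shift and orientation. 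These isomorphisms are compatible with the restriction morphism. In the subanalytic setting this has to be read through $J$, or applied termwise to the $F_i$. The restriction morphism then corresponds to $R\Gamma_{U^{\prime\circ a}}(\mu_\chi F)\to R\Gamma_{U^{\circ a}}(\mu_\chi F)$, induced by the inclusion of closed sets $U^{\prime\circ a}\subset U^{\circ a}$.

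Second, apply Theorem~\ref{estimate1}: the support of $\mu^{sa}_\chi(F)$ lies in $Z:=C_{\chi^\ast}(V)\cap S^\ast_\chi$, and $\mu^{sa}_\chi(F)\simeq\mu^{sa}_\chi(F)_Z$. Hence $R\Gamma_{U^{\circ a}}(\mu^{sa}_\chi F)$ depends only on $Z\cap U^{\circ a}$. Under the hypothesis $Z\cap U^{\circ a}\subset U^{\prime\circ a}$ we get $Z\cap U^{\circ a}=Z\cap U^{\prime\circ a}$. Therefore both local cohomologies coincide, for instance through the triangle $R\Gamma_{U^{\prime\circ a}}\to R\Gamma_{U^{\circ a}}\to R\Gamma_{U^{\circ a}\setminus U^{\prime\circ a}}\xrightarrow{+1}$. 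The third term vanishes because the support of $\mu_\chi$ does not meet $U^{\circ a}\setminus U^{\prime\circ a}$.

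To make this rigorous for subanalytic sheaves, the cleanest route is to work termwise. Take the filtrant system $F_i$ with $\mS(F_i)\subset V$ given by strong regularity. By \cite[Theorem 3.6]{HPY} each $\mu_\chi F_i$ is supported in $C_{\chi^\ast}(\mS(F_i))\cap S^\ast_\chi\subset Z$, so the classical argument above gives the isomorphism for each $F_i$. Then pass to the limit using $J(\nu_\chi^{sa}F)\simeq\indlim{i}J(R\rho_\ast\nu_\chi F_i)$ (by \cite[Theorem 3.3]{KS01mic}). Sections over the open subanalytic sets $U$ and $U^\prime$ commute with this limit. Here one may have to shrink to relatively compact pieces, using that strong regularity is only local on $X$ and $\nu_\chi$ is multi-conic over a neighborhood.

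The main obstacle is this last limit step. Global sections $R\Gamma(U;\cdot)$ of a subanalytic sheaf commute with filtrant limits only for relatively compact subanalytic opens, and $U$ is merely multi-conic. I would first cover the relevant region by multi-conic opens over small neighborhoods $W$ of points of $X$ where strong regularity holds. Then I would use a Mayer--Vietoris argument together with the conic structure to reduce to $\gamma$-type relatively compact opens. Failing that, I would state the result stalkwise at points of the zero section, which is what the convex-fiber hypothesis really controls.
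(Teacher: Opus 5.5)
Your first two paragraphs are the paper's proof. The paper identifies the restriction map, via the Fourier--Sato isomorphisms for open multi-conic sets with convex fibers, with $R\Gamma_{U^{\prime\circ a}}(S^\ast_\chi;\mu_\chi(F)\otimes\ori_{M/X}[d])\to R\Gamma_{U^{\circ a}}(S^\ast_\chi;\mu_\chi(F)\otimes\ori_{M/X}[d])$, and then concludes from the isomorphism $\mu^{sa}_\chi(F)\xrightarrow{\sim}\mu^{sa}_\chi(F)_Z$ of Theorem~\ref{estimate1}, exactly as you do.

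The termwise detour and the ``main obstacle'' you raise are unnecessary. Those Fourier--Sato isomorphisms hold directly for subanalytic sheaves; the paper cites \cite[Remark 3.2.7, Proposition 3.2.8]{Lu2} and \cite[Proposition 2.2]{HPY}. Theorem~\ref{estimate1} is already an isomorphism of subanalytic sheaves. So you never need $R\Gamma(U;\cdot)$ to commute with ind-limits, and you should not fall back to a stalkwise weakening.
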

\proof
Set $\gamma:=U^{\circ a},\gamma^\prime:=U^{\prime \circ a}$. By the following commutative diagram
\begin{eqnarray}
\begin{tikzcd}
	{R\Gamma(U^\prime ; \nu_\chi(F))} & {R\Gamma_{\gamma^\prime}(S^\ast_\chi; \mu_{\chi}(F)\otimes \ori_{M/X}[d])} \\
	{R\Gamma(U ; \nu_\chi(F))} & {R\Gamma_{\gamma}(S^\ast_\chi;\mu_{\chi}(F)\otimes \ori_{M/X}[d])}
	\arrow["\beta",from=1-1, to=2-1]
	\arrow[from=1-2, to=1-1]
	\arrow["\alpha", from=1-2, to=2-2]
	\arrow[from=2-2, to=2-1]
\end{tikzcd}
\end{eqnarray}
with horizontal isomorphisms by \cite[Remark 3.2.7, Proposition 3.2.8]{Lu2} and \cite[Proposition 2.2]{HPY}, where $\alpha$ and $\beta$ are morphisms induced by restrictions. Therefore by Proposition \ref{estimate1}, $\alpha$ is an isomorphism.
\endproof
\subsection{Microsupport estimate of multi-microlocalization}
By making use of the techniques developed in \cite{KS01mic}, we may obtain the microsupport estimate of multi-microlocalization. Recall \cite[Definition 4.4]{KS01mic}: let $\Lambda_i, i \in I$ be a family of closed conic subsets of $T^\ast X$, indexed by the objects of a small and filtrant category $I$.
$p \in T^\ast X$ does not belong to $\lim_i \Lambda_i $ if there exists an open neighborhood $U$ of $p$ and a cofinal subset $J$ of $I$ such that $\Lambda_j \cap U = \emptyset$ for every $j \in J$. 
\begin{theo}\label{sec 2: multi-estimate}
{\itshape
Let $F \in {D}^b(k_{X_{sa}})$ be strongly regular along $V$. We have:
\[
\mS(\mu_\chi^{sa}(F)) \subset \mathrm{C}_{\chi^\ast}(V).
\]
In particular, 
\[
\mS(\rho^{-1}\mu_\chi^{sa}(F)) \subset \mathrm{C}_{\chi^\ast}(V).
\]
}
\end{theo}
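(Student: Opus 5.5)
The plan is to reduce the statement to the classical microsupport estimate for multi-microlocalization of $\mathbb{R}$-constructible sheaves from \cite{HPY}, and to pass to the ind-limit using the Kashiwara--Schapira theory of microsupports of ind-objects \cite{KS01mic}.

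The question is local on $X$. So fix $x\in X$, and take $F^\prime$, the filtrant category $I$ and $F_i\in D^{[a,b]}_{\Rc}(k_X)$ as in the definition of strong regularity, with $J(F^\prime)\simeq \indlim{i} J(R\rho_*F_i)$ and $\mS(F_i)\subset V$ over a neighborhood of $x$. Since $\mu^{sa}_\chi$ is built from $\nu^{sa}_\chi$ and the multi-Fourier--Sato transform, each of which is composed of inverse images, direct images and kernels, the argument already used in the proof of Theorem \ref{estimate1} applies. Combining \cite[Theorem 3.3]{KS01mic} with \cite[Proposition 2.3.2]{KSW}, we get
\[
J(\mu^{sa}_\chi(F^\prime))\simeq \indlim{i} J(R\rho_*\mu_\chi(F_i)).
\]
Each $\mu_\chi(F_i)$ is $\mathbb{R}$-constructible with amplitude bounded uniformly in $i$. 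By the classical estimate \cite[Theorem 3.6]{HPY}, together with $\mS(F_i)\subset V$ and the monotonicity of $C_{\chi^\ast}$, we have
\[
\mS(\mu_\chi(F_i))\subset C_{\chi^\ast}(\mS(F_i))\subset C_{\chi^\ast}(V)
\]
for all $i$.

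Next we invoke the ind-version of the microsupport, \cite[Definition 4.4]{KS01mic} and the associated result there. For a filtrant system $G_i$ of sheaves in $D^{[a,b]}_{\Rc}$, the microsupport of $\indlim{i} G_i$ is contained in $\lim_i \mS(G_i)$. Take $p\notin C_{\chi^\ast}(V)$, which is closed. Then some open neighborhood $U$ of $p$ avoids $C_{\chi^\ast}(V)$, hence avoids every $\mS(\mu_\chi(F_i))$. Taking $J=I$ as the cofinal subset gives $p\notin \lim_i \mS(\mu_\chi F_i)$, and therefore $p\notin \mS(\mu^{sa}_\chi(F^\prime))$. Since $\mu^{sa}_\chi(F^\prime)\simeq\mu^{sa}_\chi(F)$ over $\pi^{-1}$ of a neighborhood of $x$, and the microsupport is local, the first inclusion follows.

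For the second claim, $\rho^{-1}$ commutes with filtrant colimits, so $\rho^{-1}\mu^{sa}_\chi(F)\simeq \varinjlim_i \mu_\chi(F_i)$ locally. The microsupport of a filtrant colimit of sheaves with uniformly bounded amplitude is contained in $\overline{\bigcup_i \mS}$, or more precisely in $\lim_i \mS$; this is again \cite{KS01mic}, or follows directly from the test-function characterization of $\mS$, because $R\Gamma$ on relatively compact subanalytic opens commutes with such colimits. Alternatively, one can deduce it from the first inclusion, using the general fact that $\mS(\rho^{-1}G)\subset \mS(G)$ for ind-sheaves.

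The main obstacle is the first step: verifying that $J$ commutes with $\mu^{sa}_\chi$ in the strong sense, with $\mu_\chi$ applied termwise. The construction of $\mu^{sa}_\chi$ in \cite{HPY} involves $R\Gamma$ with supports on multi-conic subsets and a multi-normal deformation, and we need each step to preserve ind-limits of $R\rho_*$ of $\mathbb{R}$-constructible sheaves. I would first check this for the multi-normal deformation $\nu^{sa}_\chi$, using \cite[Theorem 3.3]{KS01mic} for $Rj_*$ and $i^{-1}$ on subanalytic maps. The Fourier--Sato step is then a kernel operation with an $\mathbb{R}$-constructible kernel, and it is handled by the same result.
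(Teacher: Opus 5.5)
This is essentially the paper's own proof: you use \cite[Theorem 3.3]{KS01mic} to get $J(\mu^{sa}_\chi(F^\prime))\simeq\indlim{i}J(R\rho_*\mu_\chi(F_i))$, then \cite[(4.1)]{KS01mic} with \cite[Theorem 3.6]{HPY} to get $\mS(\mu^{sa}_\chi(F^\prime))\subset\lim_i C_{\chi^\ast}(\mS(F_i))\subset C_{\chi^\ast}(V)$ locally, and $\mS(\rho^{-1}G)\subset\mS(G)$ \cite[Proposition 3.13 (ii)]{M} for the second inclusion. For that second inclusion keep only the last route, because your ``direct'' justification is false: $R\Gamma(U;\cdot)$ does not commute with filtrant colimits of classical $\mathbb{R}$-constructible sheaves, even for relatively compact subanalytic $U$ (for $U=(0,1)\subset\mathbb{R}$ one has $\Gamma(U;k_{(1/n,1)})=0$ for every $n$, while $\Gamma(U;\varinjlim_n k_{(1/n,1)})=\Gamma(U;k_{(0,1)})=k$), so the bound for $\varinjlim_i\mu_\chi(F_i)$ has to go through \cite[(4.1)]{KS01mic} and \cite[Proposition 3.13 (ii)]{M} rather than through stalks.
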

\proof
Since $F$ is strongly regular along $V$, for any $x\in X$, we find $F^\prime$ isomorphic to $F$ in a neighborhood $U$ of $x$  so that we have $J(F^\prime)\simeq \indlim{i} J(R\rho_* F_i)$  with $\mS(F_i) \cap \pi^{-1}(U) \subset V$. By \cite[Theorem 3.3]{KS01mic} we have
\[
J(\mu_{\chi}^{sa}(F^\prime)) \simeq  \indlim{i} J(R\rho_* \mu_\chi(F_i)).
\]
By \cite[(4.1)]{KS01mic} and \cite[Theorem 3.6.]{HPY} on $\pi^{-1}U$ we have
\begin{eqnarray*}
\mS(\mu^{sa}_\chi F^\prime) &\subset&\lim_i \mS(\mu_\chi(F_i))\\
&\subset& \lim_i \mathrm{C}_{\chi^\ast}(\mS(F_i)) \subset \mathrm{C}_{\chi^\ast}(V).
\end{eqnarray*}
By \cite[Proposition 3.13 (ii)]{M}, the second assertion follows.
\endproof
\begin{cor}\label{submicroestri}
{\itshape
Let $F \in \mathrm{D}^b(k_{X_{sa}})$ be strongly regular along $V$. We have:
\[
\mS(\mu^{sa}_M(F)) \subset \mathrm{C}_{T^\ast_M X}(V).
\]
}
\end{cor}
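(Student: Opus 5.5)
We obtain Corollary \ref{submicroestri} by specializing Theorem \ref{sec 2: multi-estimate} to a family with a single submanifold, $\chi=\{M\}$. This family trivially satisfies H1, H2 and H3 of \cite{HPY}, since there are no intersection or inclusion conditions between distinct members.

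The first step is to identify the objects in this case. For $\ell=1$, the multi-normal deformation of \cite{HPY} is the usual normal deformation of $M$ in $X$. The multi-microlocalization $\mu_\chi^{sa}$ therefore coincides with Prelli's microlocalization $\mu_M^{sa}$ of subanalytic sheaves. Likewise, $\mu_\chi$ coincides with the classical Sato microlocalization $\mu_M$ of \cite{KS90} on $S^\ast_\chi=T^\ast_MX$. I would justify this by comparing definitions: both are built as $\RintHom$ / Fourier--Sato transforms of the specialization along the same deformation. The corresponding statement for classical sheaves is already recorded in \cite{HPY}.

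The second step is to identify the cone. For $\ell=1$, the multi-conic normal cone $C_{\chi^\ast}(V)$ is the normal cone of $V$ along $T^\ast_M X\subset T^\ast X$. Under the usual Hamiltonian identification $T^\ast T^\ast_MX\simeq T T^\ast_MX$, this is exactly $C_{T^\ast_MX}(V)$. I would check this with the sequential description in the proof of Lemma \ref{Sec2: formula1} (via \cite[Lemma 1.4]{HPY}). With a single scaling parameter $t_n\to\infty$, the sequences $(t_n x', x''; \xi', t_n\xi'')$ are precisely the sequences defining $C_{T^\ast_MX}(V)$ in local coordinates $x=(x',x'')$ with $M=\{x'=0\}$.

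Inserting these identifications into Theorem \ref{sec 2: multi-estimate} gives $\mS(\mu_M^{sa}(F))\subset C_{T^\ast_MX}(V)$. The main obstacle is the compatibility of the two constructions: that the sheaf-theoretic identification $\mu_\chi^{sa}\simeq\mu_M^{sa}$ for $\chi=\{M\}$ is an isomorphism and not merely a comparison morphism. If a direct citation is not available, I would prove it through $J$. By \cite[Theorem 3.3]{KS01mic}, both sides commute with $\indlim{i} J(R\rho_\ast\,\cdot)$ applied to the system $F_i$ given by strong regularity. Both sides agree on each $F_i\in D^b_{\Rc}(k_X)$ by the classical case. Conservativity of $J$ \cite[Corollary 15.4.4]{KS06} then concludes.
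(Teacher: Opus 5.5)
Your proposal is correct and follows the paper's approach: the paper gives no separate proof and treats this corollary as the case $\chi=\{M\}$ of Theorem \ref{sec 2: multi-estimate}, where $S^\ast_\chi=T^\ast_MX$, $\mu^{sa}_\chi=\mu^{sa}_M$ and $C_{\chi^\ast}(V)=C_{T^\ast_MX}(V)$, matching your identifications. The identification $\mu^{sa}_{\{M\}}\simeq\mu^{sa}_M$ holds directly by construction, since for $\ell=1$ the multi-normal deformation is the usual normal deformation, so your $J$-based fallback is unnecessary; as sketched it would in any case need a natural comparison morphism before conservativity of $J$ could be applied.
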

\begin{theo}\label{cor6.4.4}
{\itshape
Let $W$ be a subset of $T^\ast Y$. Let $f: Y \to X$ be a morphism of analytic manifolds. Assume $F\in D^b(k_{X_{sa}})$ is strongly regular along $V$ in $T^\ast X$. 
If
$f$ is non-characteristic for $V$ on $W$
then
\[
\rho^{-1} f^{-1}F \otimes \omega_{Y/X} \to \rho^{-1}f^{!} F
\]
is an isomorphism on $W$.
}
\end{theo}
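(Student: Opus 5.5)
The statement is local on $Y$, so I would fix $y\in Y$ and reduce to the classical result \cite[Corollary 6.4.4]{KS90} (more precisely its microlocal form, \cite[Proposition 6.4.?/Exercise]{KS90}: if $f$ is non-characteristic for $\mS(G)$ on $W$, then $f^{-1}G\otimes\omega_{Y/X}\to f^!G$ is an isomorphism on $W$ in $D^b(k_Y;W)$). The transfer to subanalytic sheaves goes through the ind-representation given by strong regularity. By definition there are $F'\simeq F$ near $f(y)$, a small filtrant category $I$ and a functor $i\mapsto F_i\in D^{[a,b]}_{\Rc}(k_X)$ with $J(F')\simeq\indlim{i}J(R\rho_*F_i)$ and $\mS(F_i)\subset V$ over a neighborhood of $f(y)$. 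Replacing $F$ by $F'$ changes neither side of the morphism near $y$, so we may assume $F=F'$.

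The first step is to commute $f^{-1}$ and $f^!$ with the ind-limit. By \cite[Proposition 2.3.2]{KSW} and \cite[Theorem 3.1]{KS01mic} (exactly as in the proof of Theorem \ref{reg_sol} and the proposition on $f^{-1},f^!$ above), we get
\[
J(f^{-1}F)\simeq\indlim{i}J(R\rho_*f^{-1}F_i),\qquad J(f^!F)\simeq\indlim{i}J(R\rho_*f^!F_i).
\]
Since $\omega_{Y/X}$ is an $\mathbb{R}$-constructible (locally constant, shifted) sheaf, tensoring also commutes: $J(f^{-1}F\otimes\omega_{Y/X})\simeq\indlim{i}J(R\rho_*(f^{-1}F_i\otimes\omega_{Y/X}))$. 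By naturality of the canonical morphism $f^{-1}(\cdot)\otimes\omega_{Y/X}\to f^!(\cdot)$, the morphism in the statement is the ind-limit of the morphisms $u_i\colon f^{-1}F_i\otimes\omega_{Y/X}\to f^!F_i$. I would then take the mapping cone $C$ of $f^{-1}F\otimes\omega_{Y/X}\to f^!F$. The five-lemma argument of Proposition \ref{abel}, applied through \cite[Lemma 2.3.1]{KSW} on cohomology objects, gives $J(C)\simeq\indlim{i}J(R\rho_*C_i)$, where $C_i$ is the cone of $u_i$.

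The second step uses the classical theorem. Since $\mS(F_i)\subset V$ and $f$ is non-characteristic for $V$ on $W$, $f$ is non-characteristic for $\mS(F_i)$ on $W$ for every $i$. Hence $u_i$ is an isomorphism in $D^b(k_Y;W)$, which means $\mS(C_i)\cap W=\emptyset$ for all $i$. Using \cite[(4.1)]{KS01mic} as in the proof of Theorem \ref{sec 2: multi-estimate}, we get $\mS(C)\subset\lim_i\mS(C_i)$, which does not meet $W$. For the interior of $W$ this is immediate. Since the statement is about $W$ itself, I would use the openness of the non-characteristic condition on a neighborhood of each point (with $V$ closed conic, or after replacing $V$ by its closure) so that a uniform open neighborhood works for all $i$.

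Finally, by \cite[Proposition 3.13 (ii)]{M}, $\mS(\rho^{-1}C)\subset\mS(C)$, so $\rho^{-1}C$ has microsupport disjoint from $W$. Since $\rho^{-1}$ is exact, $\rho^{-1}C$ is the cone of $\rho^{-1}f^{-1}F\otimes\omega_{Y/X}\to\rho^{-1}f^!F$, and that morphism is an isomorphism in $D^b(k_Y;W)$. I expect the main obstacle to be this passage from each $C_i$ to the ind-object: one must check that the cone is compatible with $\indlim{}$ (the functoriality issue handled in Proposition \ref{abel}) and that the limit of microsupports in \cite[Definition 4.4]{KS01mic} avoids $W$ uniformly. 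I would first try to obtain the uniform neighborhood from the fact that the condition $\mS(F_i)\subset V$ holds for all $i$, so the bound is really in terms of the single set $V$.
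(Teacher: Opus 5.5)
Your route differs from the paper's: you pass the canonical morphism termwise through the ind-representation and apply the classical \cite[Corollary 6.4.4]{KS90} to each cone. As written, this has a genuine gap at the cone step. A natural transformation $u_i\colon f^{-1}F_i\otimes\omega_{Y/X}\to f^!F_i$ between functors $I\to D^b(k_Y)$ does not give a functor $i\mapsto C_i=\mathrm{Cone}(u_i)$. TR3 supplies morphisms $C_i\to C_j$, but they are not unique and need not be compatible with composition. So neither $\{C_i\}$ nor $\{H^k(C_i)\}$ is an inductive system.

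Proposition \ref{abel} cannot fill this, because it \emph{assumes} a functorial triangle. Likewise, \cite[(4.1)]{KS01mic} needs an honest system with $J(C)\simeq\indlim{i}J(R\rho_\ast C_i)$. Nor can you take cones at the chain level, since strong regularity only gives a functor into $D^{[a,b]}_{\Rc}(k_X)$, not into complexes. You name this as the main obstacle but give no way around it.

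The missing idea, which is what the paper uses, is a canonical, functorial model for the cone. Identify $Y$ with the graph of $f$. Then \cite[Theorem 5.1.2]{Lu2} gives
\[
f^{-1}F\otimes\omega_{Y/X}\simeq Rq_{!!}\mu_Y(F\boxtimes\omega_Y),\qquad f^!F\simeq Rq_\ast\mu_Y(F\boxtimes\omega_Y).
\]
Hence there is a triangle whose third term is $R\dot q_\ast\rho^{-1}\mu^{sa}_Y(F)$, and this term is a functor of $F$.

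Because $\mu_Y$ commutes with $\indlim{i}$ (\cite[Theorem 3.3]{KS01mic}), Corollary \ref{submicroestri} gives $\mS(\mu^{sa}_Y(F))\subset\mathrm{C}_{T^\ast_YX}(V)$. Then \cite[Proposition 5.5.4]{KS90} bounds the microsupport of the third term by $\dot q_\pi{}^t\dot q^{\prime-1}(\mathrm{C}_{T^\ast_YX}(V))$, which misses $W$ by hypothesis.

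This also removes your uniformity worry. The bound depends only on $V$, not on $i$, so no termwise classical result and no cofinal-neighbourhood argument is needed. Your openness idea would work once you know that the non-characteristic condition says exactly that this closed set misses $W$, but it does not help until the cone problem is solved.
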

\proof
Identify $Y$ with the graph of $f$ in $X \times Y$ and denote by $q$ the projection $T^\ast_Y(X \times Y) \to Y$. Then by \cite[Theorem 5.1.2]{Lu2} we get:
\begin{equation}
  {}
  \begin{cases}
    f^{-1}F \otimes \omega_{Y/X} \simeq Rq_{!!} \mu_Y(F \boxtimes \omega_Y),\\
    f^! F \simeq Rq_\ast \mu_Y(F \boxtimes \omega_Y).
  \end{cases}
\end{equation}
We have the following distinguished triangle:
\[\begin{tikzcd}
	{\rho^{-1}f^{-1}F \otimes \omega_{Y/X}} & {\rho^{-1}f^{!}F} & {R\dot{q}_\ast \rho^{-1}\mu^{sa}_Y(F)} & {{}.}
	\arrow[from=1-1, to=1-2]
	\arrow[from=1-2, to=1-3]
	\arrow["{+1}", from=1-3, to=1-4]
\end{tikzcd}\]
And by the assumption we have:
\[
\dot{q}_{\pi} {}^t \dot{q}^{\prime-1}(\mathrm{C}_{T^\ast_Y X}(V)) \cap W = \emptyset.
\]
Therefore the result follows from Corollary \ref{submicroestri} and \cite[Proposition 5.5.4]{KS90}. 
\endproof
\begin{cor}
{\itshape
Let $G_1,\dots ,G_\ell \in D^b_{\Rc}(X)$. 
Assume $F\in D^b(k_{X_{sa}})$ is strongly regular along $V$. Then:
\[\mS(\mu hom_{\hat{\chi}}(G_1, \dots, G_\ell; F)) \subset \mathrm{C}_{\hat{\chi}^\ast}(\mS(G_1 \boxtimes \cdots\boxtimes G_\ell)^a \times i_{\Delta\pi}{}^ti_{\Delta}^{\prime-1}(V)).\]
Moreover, 
\[
{\overset{\ell}{\underset{i=1}{\otimes} }D^\prime G_i \otimes \rho^{-1}F} \rightarrow {\rho^{-1}\RintHom(\overset{\ell}{\underset{i=1}{\otimes} }G_i, F)}
\]
is an isomorphism on $T^\ast X \setminus R\dot{\pi}_\pi {}^t \dot{\pi}^{\prime-1} \mathrm{C}_{\hat{\chi}^\ast}(\mS(G_1\boxtimes \cdots \boxtimes G_\ell)^a \times i_{\Delta\pi}{}^ti_{\Delta}^{\prime-1}(V))$.
}
\end{cor}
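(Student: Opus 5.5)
The plan is to treat this corollary as the microsupport analogue of Proposition \ref{estimate2}. In that proof the multi-microlocal $\mu hom$ is written as a multi-microlocalization $\mu^{sa}_{\hat{\chi}}$ of the subanalytic sheaf
\[
\RintHom(p_2^{-1}(G_1\boxtimes \cdots \boxtimes G_\ell), p_{1}^{!}Ri_{\Delta \ast}F).
\]
Accordingly, I will first show that this object is strongly regular along $\mS(G_1\boxtimes\cdots\boxtimes G_\ell)^a \times i_{\Delta\pi}{}^ti_{\Delta}^{\prime-1}(V)$. For this I use the functorial results of Section \ref{Sec: strong}:
\begin{itemize}
\item Proposition \ref{sec1: pushforward} for the closed embedding $i_\Delta$, giving the factor $i_{\Delta\pi}{}^ti_{\Delta}^{\prime-1}(V)$;
\item the inverse image proposition for $p_1^{!}$;
\item the tensor product proposition, after rewriting $\RintHom(p_2^{-1}G, \cdot)$ as $D'p_2^{-1}G\otimes(\cdot)$ up to the standard microsupport computation $\mS(D'G)=\mS(G)^a$, as in \cite[Proposition 5.4.1, 5.4.2, 5.4.4]{KS90}.
\end{itemize}
Concretely, the inductive system $F_i$ realizing the strong regularity of $F$ is transported to the system $\RintHom(p_2^{-1}(\boxtimes G_k), p_1^!Ri_{\Delta\ast}F_i)$, using \cite[Theorem 3.3]{KS01mic} and \cite[Proposition 2.3.2]{KSW} to commute $J$ with these operations.

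Next I apply Theorem \ref{sec 2: multi-estimate} to this strongly regular object with the family $\hat{\chi}$ in place of $\chi$. This directly gives the first inclusion
\[
\mS(\mu hom_{\hat{\chi}}(G_1,\dots,G_\ell;F))\subset \mathrm{C}_{\hat{\chi}^\ast}(\mS(G_1\boxtimes\cdots\boxtimes G_\ell)^a\times i_{\Delta\pi}{}^ti_{\Delta}^{\prime-1}(V)).
\]
This requires that $\hat{\chi}$ satisfies H1--H3, which I take from the construction in \cite{HPY} and \cite{S}.

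For the second assertion I use the distinguished triangle of \cite[Corollary 3.12]{S}, already used in the earlier corollary. Its first two terms are
\[
\overset{\ell}{\underset{i=1}{\otimes}}D'G_i\otimes\rho^{-1}F \quad\text{and}\quad \rho^{-1}\RintHom(\otimes G_i,F),
\]
and its third term is $R\dot{\pi}_\ast$ of $\rho^{-1}\mu hom_{\hat{\chi}}$ restricted to the complement of the zero section. The morphism in the statement is then an isomorphism wherever the microsupport of the third term vanishes. I will bound that microsupport by the direct image estimate \cite[Proposition 5.5.4]{KS90}, exactly as in the proof of Theorem \ref{cor6.4.4}. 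This gives
\[
\mS(R\dot{\pi}_\ast(\cdot))\subset \dot{\pi}_\pi{}^t\dot{\pi}^{\prime-1}\mathrm{C}_{\hat{\chi}^\ast}(\cdots),
\]
and off this set the triangle yields the isomorphism, as microsupport statements on $T^\ast X$.

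I expect the main obstacle to be the properness hypothesis needed for \cite[Proposition 5.5.4]{KS90} when pushing forward along $\dot{\pi}$. I would first try to obtain it from the $\mathbb{R}^+$-conic structure, reducing to the sphere bundle, on which $\dot{\pi}$ is proper. A secondary issue is checking that the \cite[Corollary 3.12]{S} triangle is compatible with $\rho^{-1}$, which I would settle using \cite[Proposition 3.13 (ii)]{M}, as at the end of Theorem \ref{sec 2: multi-estimate}.
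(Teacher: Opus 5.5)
Your proposal is correct and follows the paper's route. The first inclusion is the strong regularity of $\RintHom(p_2^{-1}(G_1\boxtimes\cdots\boxtimes G_\ell), p_1^{!}Ri_{\Delta\ast}F)$ from the proof of Proposition \ref{estimate2}, fed into Theorem \ref{sec 2: multi-estimate} for $\hat{\chi}$. The second assertion comes from the triangle of \cite[Corollary 3.12]{S} with third term $R\dot{\pi}_\ast\mu hom_{\hat{\chi}}(G_1,\dots,G_\ell;F)$, which is all the paper writes down. Your properness concern is the same one met in Theorem \ref{cor6.4.4}, where \cite[Proposition 5.5.4]{KS90} handles it for conic objects (equivalently, by your reduction to the sphere bundle). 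Your rewriting of $\RintHom(p_2^{-1}G,\cdot)$ as $D^\prime p_2^{-1}G\otimes(\cdot)$ is legitimate only because $p_2^{-1}G$ and $p_1^{!}Ri_{\Delta\ast}F_i$ live on different factors of a product, with $F_i$ constructible; it would not hold in general.
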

\proof
Consider the distinguished triangle
\[
\begin{tikzcd}
	{\overset{\ell}{\underset{i=1}{\otimes} }D^\prime G_i \otimes \rho^{-1}F} \arrow[r] & 
	{\rho^{-1}\RintHom(\overset{\ell}{\underset{i=1}{\otimes} }G_i, F)} \arrow[r] & 
	{H} \arrow[r, "+1"] & {,}
\end{tikzcd}
\]
where $H:=R\dot{\pi}_*\mu hom_{\hat{\chi}}(G_1, \dots, G_\ell; F)$.
\endproof

\section{Multi-microlocalizations and inverse images}\label{Sec: inverse}

In this section, our goal is division theorems for regular $\mathcal{D}$-modules. For this purpose we study a theorem of multi-microlocalizations and inverse images which is a generalization of \cite[Theorem 6.7.1]{KS90} in the context of the theory of multi-microlocalizations.
Let $f: Y \to X$ be a morphism of manifolds, let $\chi^M=\{M_1, \dots, M_\ell\}$ and $\chi^N=\{N_1, \dots, N_\ell\}$ be two families of closed submanifolds of $X$ and $Y$ respectively which satisfy H1, H2 and H3 with $f(N_j) \subset M_j$, $j=1, \dots, \ell$.
By \cite[Proposition 2.5]{H} for $F \in D^b(k_{X_{sa}})$ there exists a commutative diagram of canonical morphisms:
\begin{eqnarray}\label{cd}
\begin{tikzcd}
	{R{}^tf^{\prime}_{\chi!}(\omega_{N/M} \otimes f^{-1}_{\chi\pi} \mu^{}_{\chi^M}F)} & {\mu_{\chi^N}(\omega_{Y/X}\otimes f^{-1}F)} \\
	{R{}^tf^{\prime}_{\chi\ast} f^!_{\chi\pi} \mu^{}_{\chi^M}F} & {\mu^{}_{\chi^N}( f^{!}F).}
	\arrow[from=1-1, to=1-2]
	\arrow[from=1-2, to=2-2,"\alpha" ]
	\arrow[from=2-2, to=2-1]
	\arrow[from=1-1, to=2-1, "\beta"]
\end{tikzcd}
\end{eqnarray}
Let us consider $\chi =\{M_1, \dots, M_\ell\}$ is of normal crossing type. $M:= \bigcap_{M_i \in \chi } M_i $ ($N:= \bigcap_{N_i \in \chi } N_i $).
Let $\pi: S^\ast_\chi \to T^\ast_M X$ be an identification in an obvious manner:
\begin{eqnarray}\label{pi}
\function{\pi}{S^\ast_\chi}{T^\ast_M X}{(x, \xi_1), (x, \xi_2), \dots, (x, \xi_\ell)}{(x, \xi_1, \dots, \xi_\ell).}
\end{eqnarray}
We get the following diagram if $\chi^M$ and $\chi^N$ are of normal crossing type:
\[\begin{tikzcd}
	{S^\ast_{\chi^N}} & {N \times_M S_{\chi^M}^\ast}& {S_{\chi^M}^\ast} \\
	{T^\ast_N Y} & {N \times_M T^\ast_M X} & {T^\ast_M X.}
	\arrow["\pi","\sim"',from=1-1, to=2-1]
	\arrow["\pi","\sim"',from=1-3, to=2-3]
	\arrow["\sim"',from=1-2, to=2-2]
	\arrow["f_{\chi \pi}",from=1-2, to=1-3]
	\arrow["{{}^t f^\prime_\chi}"', from=1-2, to=1-1]
	\arrow["{{}^t f^\prime_N}"', from=2-2, to=2-1]
	\arrow["{f_{N\pi}}", from=2-2, to=2-3]
\end{tikzcd}\]
\subsection{Classical sheaf case}
We first study multi-microlocalizations and inverse images in the case of classical sheaves. 
\begin{lem}\label{non-char}
Let $V$ be a closed conic subset of $T^\ast X$.
If
\[V \cap \dot{T}^\ast_M X = \emptyset,\]
then
\[
C_{\chi^{M\ast}}(V)\cap \dot{S}^\ast_{\chi^M} = \emptyset.
\]
\end{lem}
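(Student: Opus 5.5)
We argue by contradiction, using the sequential description of the multi-normal cone from \cite[Lemma 1.4]{HPY}, recalled in the proof of Lemma \ref{Sec2: formula1}. We work in local coordinates adapted to the normal crossing family $\chi^M$, namely $x=(x^{(0)},x^{(1)},\dots,x^{(\ell)})$ with $M_j=\{x^{(j)}=0\}$. Then $M=\{x^{(1)}=\cdots=x^{(\ell)}=0\}$, and through the identification $\pi$ in (\ref{pi}) we have
\[
\dot S^\ast_{\chi^M}\simeq \dot T^\ast_M X=\{(x^{(0)},0,\dots,0;0,\xi^{(1)},\dots,\xi^{(\ell)});\ (\xi^{(1)},\dots,\xi^{(\ell)})\neq 0\}.
\]
Suppose $p=(x^{(0)},0,\dots,0;0,\xi^{(1)},\dots,\xi^{(\ell)})\in C_{\chi^{M\ast}}(V)\cap \dot S^\ast_{\chi^M}$. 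By Lemma \ref{Sec2: formula1} (or directly by \cite[Lemma 1.4]{HPY}), there are points $q_n=(x^{(0)}_n,\dots,x^{(\ell)}_n;\xi^{(0)}_n,\dots,\xi^{(\ell)}_n)\in V$ and scalars $t_{j,n}\to+\infty$ with
\[
t_{\hat J_j,n}x^{(j)}_n\to 0\ (j\ge 1),\qquad x^{(0)}_n\to x^{(0)}\ \text{(up to the factor }t_{\hat J_0,n}\text{)},\qquad t_{\hat J_0^c,n}\xi^{(0)}_n\to 0,\qquad t_{\hat J_j^c,n}\xi^{(j)}_n\to \xi^{(j)}.
\]

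The key step is to extract from the $q_n$ a limit point lying in $V\cap \dot T^\ast_M X$. Since the factors $t_{\hat J_j,n}$ attached to the normal variables $x^{(j)}$, $j\ge1$, tend to infinity, we get $x^{(j)}_n\to0$ for $j\geq 1$, so the base points $\pi(q_n)$ converge to the point $(x^{(0)},0,\dots,0)$ of $M$. Because $V$ is conic, we may rescale the covectors. We set $s_n:=\max_{j\ge0}|\xi^{(j)}_n|$, which is nonzero for $n$ large since some $\xi^{(j)}\neq 0$, and replace $q_n$ by $q_n/s_n$ in the fibre variables. After passing to a subsequence, $\xi_n/s_n\to\eta$ with $|\eta|=1$. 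Since $V$ is closed, the limit $(x^{(0)},0;\eta)$ belongs to $V$.

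It remains to check that $\eta^{(0)}=0$, i.e.\ that $\eta$ is conormal to $M$. This is where the orders of growth must be compared: the weight $t_{\hat J_0^c,n}$ multiplying $\xi^{(0)}_n$ has to dominate the weights $t_{\hat J_j^c,n}$ multiplying the $\xi^{(j)}_n$ whose limits are nonzero. We will verify this from the definition of the index sets $\hat J_j$ for a normal crossing family: $\hat J_0^c$ should contain every index, so $t_{\hat J_0^c,n}\ge t_{\hat J_j^c,n}$ eventually. With this inequality, $|\xi^{(0)}_n|/|\xi^{(j)}_n|\to 0$ for some $j$ with $\xi^{(j)}\neq0$, which gives $\eta^{(0)}=0$. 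Consequently $\eta\neq 0$ is a nonzero conormal vector at a point of $M$, so the limit lies in $V\cap\dot T^\ast_M X$, contradicting the hypothesis.

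I expect this bookkeeping of the multi-indices $\hat J_j,\hat J_j^c$ to be the main obstacle, since it depends on the precise conventions of \cite{HPY}. The fallback is to avoid these conventions by proving the inclusion $C_{\chi^{M\ast}}(V)\cap S^\ast_{\chi^M}\subset C_{T^\ast_MX}(V)\cap T^\ast_MX$, via the identification $\pi$, and then to use the classical fact that $C_{T^\ast_M X}(V)\cap \dot T^\ast_MX=\emptyset$ whenever the closed conic set $V$ satisfies $V\cap\dot T^\ast_MX=\emptyset$ \cite[Chapter 6]{KS90}.
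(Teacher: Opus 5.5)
Your main argument is correct and is essentially the paper's proof: you extract a sequence in $V$ via \cite[Lemma 1.4]{HPY}, normalize the covectors, use that $V$ is closed and conic, and kill the $\xi^{(0)}$-component because its weight $\prod_{j=1}^{\ell}t_{j,n}$ dominates the weight of a block $\xi^{(k)}$ with nonzero limit (your eventual inequality suffices; the paper uses $c^{(J_k)}_n\to\infty$). Drop the fallback, though: for $\ell\ge 2$ the inclusion $C_{\chi^{M\ast}}(V)\cap S^\ast_{\chi^M}\subset C_{T^\ast_MX}(V)\cap T^\ast_MX=V\cap T^\ast_MX$ is false in general, because the blocks $\xi^{(j)}$ are rescaled by independent factors, so the direction $\eta$ you obtain in $V$ need not be proportional to $(0,\xi^{(1)},\dots,\xi^{(\ell)})$.
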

\proof
Assume that there exists a point
\[
(x^{(0)}; \xi^{(1)}, \ldots, \xi^{(\ell)}) \in \dot{S}^*_{\chi^M} \cap C_{\chi^{M *}}(V).
\]

Then there exist sequences
\[
\{(x^{(0)}_n, x^{(1)}_n, \ldots, x^{(\ell)}_n; \xi^{(1)}_n, \ldots, \xi^{(\ell)}_n)\}_{n=1}^{\infty} \subset V,
\]
\[
\{(c^{(1)}_n, \ldots, c^{(\ell)}_n)\}_{n=1}^{\infty} \subset (\mathbb{R}^+)^{\ell},
\]
such that:
\[
\begin{cases}
\displaystyle \lim_{n \to \infty} c^{(j)}_n = \infty, & (j = 1, \ldots, \ell), \\[1em]
\displaystyle \lim_{n \to \infty}
\big(x^{(0)}_n, x^{(1)}_n c^{(\hat{J}_1)}_n, \ldots, x^{(\ell)}_n c^{(\hat{J}_{\ell})}_n;
\xi^{(0)}_n c_n, \xi^{(1)}_n c^{(\hat{J}_1)}_n, \ldots, \xi^{(\ell)}_n c^{(\hat{J}_{\ell})}_n\big)
\\[0.5em]
\quad = (x^{(0)}, 0, \ldots, 0; 0, \xi^{(1)}, \ldots, \xi^{(\ell)}),
\end{cases}
\]
where
\[
c_n := \prod_{j=1}^{\ell} c^{(j)}_n, \quad
\hat{J}_j := \{1, \ldots, \ell\} \setminus J_j, \quad
c^{(J)}_n := \prod_{j \in J} c^{(j)}_n \quad \text{for any } J \subset \{1, \ldots, \ell\}.
\]
In particular, we have
\[
\lim_{n \to \infty} (x^{(1)}_n, \ldots, x^{(\ell)}_n, \xi^{(0)}_n c_n) = (0, \ldots, 0, 0).
\]

Since $(\xi^{(1)}, \ldots, \xi^{(\ell)}) \neq 0$, we may assume that
\[
t_n := |(\xi^{(0)}_n, \xi^{(1)}_n, \ldots, \xi^{(\ell)}_n)| > 0.
\]

We consider the sequence
\[
\{(x^{(0)}_n, x^{(1)}_n, \ldots, x^{(\ell)}_n; t_n^{-1}(\xi^{(0)}_n, \xi^{(1)}_n, \ldots, \xi^{(\ell)}_n))\}_{n=1}^{\infty}
\subset V.
\]
By extracting a subsequence, we may assume that there exists $\zeta_0 \neq 0$ such that
\[
\lim_{n \to \infty} t_n^{-1}(\xi^{(0)}_n, \xi^{(1)}_n, \ldots, \xi^{(\ell)}_n) = \zeta_0.
\]
Choose $1 \leq k \leq \ell$ with 
\[
\lim_{n \to \infty} \xi^{(k)}_n c^{(\hat{J}_k)}_n = \xi^{(k)} \neq 0.
\]
Since $\lim_{n \to \infty} c^{(j)}_n = \infty$, we have
\[
\lim_{n \to \infty} \frac{\xi^{(0)}_n}{\xi^{(k)}_n}
= \lim_{n \to \infty} \frac{\xi^{(0)}_n c_n}{\xi^{(k)}_n c^{(\hat{J}_k)}_n c^{(J_k)}_n} = 0.
\]

Therefore, we see that $\zeta_0 = (0, \zeta_0^{(1)}, \zeta_0^{(2)}, \cdots, \zeta_0^{(\ell)}) \neq 0$. Hence
\[
\lim_{n \to \infty} (x^{(0)}_n, x^{(1)}_n, \ldots, x^{(\ell)}_n;
t_n^{-1}(\xi^{(0)}_n, \xi^{(1)}_n, \ldots, \xi^{(\ell)}_n))
= (x^{(0)}, 0, \ldots, 0; \zeta_0) \in \dot{T}^*_M X \cap V,
\]
which contradicts the non-characteristic condition.
\endproof
\begin{theo}\label{multi-inverse}
{\itshape
Let any family of submanifolds be of normal crossing type.
Let $F \in D^b(X)$ and $V$ be an open subset of $T^\ast_N Y$. Assume:
\begin{enumerate}[label=(\roman*)]
\item $f$ is non-characteristic for $F$ on $V$,
\item $S^\ast_{\chi^N} \cap C_{\chi^{N\ast}}(f^\sharp_\infty (\mS(F))) \cap \pi^{-1}(V) = \emptyset$,
\item $f_{N\pi}: {}^t f_N^{\prime-1}(V) \to T^\ast_M X$ is non-characteristic for $C_{T^\ast_MX}(\mS(F))$,
\item $f_{\chi \pi}: {}^t f^{\prime-1}_\chi(\pi^{-1}(V)) \to S_{\chi^{M}}^\ast$ is non-characteristic for $C_{\chi^{M\ast}}(\mS(F))$,
\item ${}^t f^{\prime-1}(V)\cap f^{-1}_\pi (\mS(F)) \subset Y \times_X T^\ast_M X$,
\item ${}^t  f'_{\chi}$ is proper on ${}^t f^{\prime-1}_\chi(\pi^{-1}(V))\cap f^{-1}_{\chi \pi}(C_{\chi^{M\ast}}(\mS(F))\cap S^\ast_{\chi^M})$.
\end{enumerate}
Then
\[R{}^t \kern -2pt f_{\chi!}^{\prime}(\omega_{N/M}\otimes f^{-1}_{\chi \pi}\mu_{\chi^M}(F))|_{\pi^{-1}(V)} \to \mu_{\chi^N}(\omega_{Y/X} \otimes f^{-1}F)|_{\pi^{-1}(V)}\]
and
\[\mu_{\chi^N}(f^! F)|_{\pi^{-1}(V)} \to R{}^tf^{\prime}_N f^!_{N\pi} \mu_{\chi^M}(F)|_{\pi^{-1}(V)}\]
are isomorphisms.
}
\end{theo}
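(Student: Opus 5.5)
The plan is to imitate the proof of \cite[Theorem 6.7.1]{KS90} and work with the commutative square (\ref{cd}) of \cite[Proposition 2.5]{H}. Hypotheses (i)--(vi) are used to show that the horizontal arrows become isomorphisms over $\pi^{-1}(V)$. The question is local on $\pi^{-1}(V)$, and the normal crossing assumption lets us identify $S^\ast_{\chi}$ with $T^\ast_M X$ through $\pi$ as in (\ref{pi}). So I would reduce the statement to two separate pieces: a piece about the zero section, and a piece about the multi-conic part away from it.

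\emph{First step (isomorphism along the zero section).} I would use the distinguished triangle comparing $\omega_{Y/X}\otimes f^{-1}F \to f^!F$ with $R\dot q_\ast\mu_Y$, as in the proof of Theorem \ref{cor6.4.4}. Hypothesis (i), that $f$ is non-characteristic for $F$ on $V$, then gives $\omega_{Y/X}\otimes f^{-1}F \simeq f^!F$ microlocally over $V$. Hence the vertical arrow $\alpha$ becomes an isomorphism after applying $\mu_{\chi^N}$ and restricting to $\pi^{-1}(V)$. For this I would use the microsupport estimate $\mS(\mu_{\chi^N}G)\subset C_{\chi^{N\ast}}(\mS(G))$ of \cite[Theorem 3.6]{HPY} together with hypothesis (ii), which makes the contribution of the cone over $f^\sharp_\infty(\mS(F))$ vanish on $S^\ast_{\chi^N}\cap \pi^{-1}(V)$.

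\emph{Second step (comparison on the target side).} Here I would factor $f$ as the graph embedding $Y\hookrightarrow Y\times X$ followed by the projection, and treat the two cases separately.
\begin{itemize}
\item For the submersion, the multi-microlocalization commutes with $f^{-1}$ by \cite{HPY}.
\item For the closed embedding, hypotheses (iii) and (iv) say that $f_{N\pi}$ and $f_{\chi\pi}$ are non-characteristic for $C_{T^\ast_MX}(\mS(F))$ and $C_{\chi^{M\ast}}(\mS(F))$. Applying \cite[Corollary 6.4.4]{KS90} to the sheaf $\mu_{\chi^M}F$ on $S^\ast_{\chi^M}$, whose microsupport is contained in $C_{\chi^{M\ast}}(\mS(F))$, then gives $\omega_{N/M}\otimes f^{-1}_{\chi\pi}\mu_{\chi^M}F \simeq f^!_{\chi\pi}\mu_{\chi^M}F$ on ${}^tf^{\prime-1}_\chi(\pi^{-1}(V))$.
\end{itemize}
Hypothesis (vi), the properness of ${}^tf'_\chi$ on the support, which is contained in $f^{-1}_{\chi\pi}(C_{\chi^{M\ast}}(\mS(F))\cap S^\ast_{\chi^M})$ by Theorem \ref{estimate1} in its classical form, then shows that $R{}^tf'_{\chi!}\simeq R{}^tf'_{\chi\ast}$ there. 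Therefore $\beta$ is an isomorphism over $\pi^{-1}(V)$.

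\emph{Third step (closing the square).} With $\alpha$ and $\beta$ both isomorphisms, it suffices to prove that one horizontal arrow is an isomorphism; commutativity of (\ref{cd}) then gives the other. To prove it, I would compute stalks at $p\in\pi^{-1}(V)$ using the multi-conic neighborhood description of $\mu_\chi$ from \cite{HPY}. Hypothesis (v), ${}^tf^{\prime-1}(V)\cap f^{-1}_\pi(\mS(F))\subset Y\times_X T^\ast_MX$, allows a deformation argument as in \cite[Proposition 6.7.1]{KS90}: by the microlocal Morse lemma \cite[Proposition 5.4.13]{KS90}, the sections over tubes of $f^{-1}F$ near $N$ coincide with the pullback of sections over the corresponding tubes for $F$ near $M$. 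Applying this inductively in each normal variable $j=1,\dots,\ell$, the normal crossing structure lets us treat one $M_j$ at a time. Lemma \ref{non-char} guarantees that the characteristic condition is preserved at each stage, with no new points appearing in $\dot S^\ast_\chi$.

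The main obstacle will be this third step, the multi-conic analogue of the non-characteristic deformation. The difficulty is that the multi-normal cones of \cite{HPY} involve independent rescalings $t_{j}\to\infty$, so I must check that the family of multi-conic tubes used in the Morse lemma stays non-characteristic for $\mS(F)$ uniformly in all $\ell$ scaling parameters. I would first handle the case $\ell=1$, which is exactly \cite[Theorem 6.7.1]{KS90}. I would then induct on $\ell$, writing $\mu_{\chi}$ as an iterated specialization and Fourier transform along the normal crossing coordinates.
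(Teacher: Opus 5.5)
Your reduction is the paper's. You reduce to a closed embedding and get $\beta$ in \eqref{cd} from \cite[Theorem 3.6]{HPY}, \cite[Corollary 6.4.4]{KS90}, (iv) and (vi). You get $\alpha$ by applying $\mu_{\chi^N}$ to the triangle with third term $R\dot q_\ast\mu_YF$ and invoking (ii). Then it suffices to treat one horizontal arrow; neither (i) nor (iii) is needed up to this point.

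\textbf{The gap is your third step.} This step is the actual content of the theorem, and you give no argument for it; the route you outline does not work. The proof of \cite[Theorem 6.7.1]{KS90} is not a Morse-lemma comparison of sections over tubes. Also, $\mu_\chi$ is a Fourier--Sato transform of $\nu_\chi$, so a tube-by-tube pullback statement does not directly address the horizontal arrow. More seriously, your induction on $\ell$ assumes that $\mu_\chi$ for a normal crossing family is an iterated microlocalization along one $M_j$ at a time. This fails: the multi-normal cone uses independent rescalings and is in general strictly larger than the iterated normal cones. For $A=\{x_2=x_1^2\}\subset\mathbb{R}^2$ and $\chi=\{\{x_1=0\},\{x_2=0\}\}$, $C_\chi(A)$ is the half-plane $\{v_2\geq 0\}$, whereas both iterated normal cones lie in a coordinate axis. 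So hypotheses (iii)--(vi), which are phrased through $C_{\chi^{M\ast}}(\mS(F))$, do not supply the intermediate non-characteristic conditions an induction would need. Lemma \ref{non-char} does not help either: applied to $\mS(F)$ it would require $\mS(F)\cap\dot T^\ast_MX=\emptyset$, which is not assumed.

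\textbf{The missing idea is to work on the multi-normal deformation $\tilde Y\to\tilde X$, with all $\ell$ parameters at once.} With $G=Rj_{X\ast}\tilde p_X^{-1}F$ one has $T_\chi f^{-1}\nu_{\chi^M}(F)\simeq s_Y^{-1}\tilde f^{\prime-1}G$ and $\nu_{\chi^N}(f^{-1}F)\simeq s_Y^{-1}(\tilde f^{\prime!}G\otimes\omega^{\otimes-1}_{\tilde Y/\tilde X})$. The discrepancy is therefore the cone $H$ of $\omega_{\tilde Y/\tilde X}\otimes\tilde f^{\prime-1}G\to\tilde f^{\prime!}G$, which is supported on $S_{\chi^N}$. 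By the identification \cite[Theorem 3.3]{HPY} it suffices that $\mS(H)\cap\pi^{-1}(V)=\emptyset$. By \cite[Corollary 6.4.4]{KS90} this follows if $\tilde f^{\prime}$ is non-characteristic for $G$ at some point over each $q_0\in\pi^{-1}(V)$.

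The paper proves this by contradiction:
\begin{enumerate}
\item \cite[Proposition 6.2.4]{KS90} produces sequences in $\mS(G)$.
\item Lemma \ref{lem1}, via \cite[Theorem 6.3.1, Proposition 5.4.5]{KS90}, turns them into sequences in $\mS(F)$ in which the $\ell$ rescalings enter only through $t_n=\prod_k t_{k,n}$.
\item Hypothesis (i) bounds $(t_n\xi^{(0)}_n,\xi^{(1)}_n,\dots,\xi^{(\ell)}_n)$, which forces $|\xi^{(0)}_n|\to\infty$.
\item Hypothesis (v) forces $t_n\xi^{(0)}_n\to0$.
\item The normalized limit of $\xi^{(0)}_n$ gives a point of $C_{T^\ast_MX}(\mS(F))$, and (iii) then contradicts \eqref{condition3}.
\end{enumerate}
So (i) and (iii) are consumed in this step, and neither an induction on $\ell$ nor the Morse lemma is needed.
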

\proof
It is enough to prove in the case that $f$ is a closed embedding.
By \cite[Theorem 3.6]{HPY}, \cite[Corollary 6.4.4]{KS90} and the assumption (iv), $\omega_{N/M} \otimes f^{-1}_{\chi\pi} \mu^{}_{\chi_M}F \to f^!_{\chi\pi} \mu^{}_{\chi^M}F$ is an isomorphism on ${}^t f^{\prime-1}_\chi(\pi^{-1}(V)) $.
By the assumption (vi), ${}^t \kern -2pt f^\prime_{\chi}$ is proper on ${}^t f^{\prime-1}_\chi(\pi^{-1}(V))\cap \supp(f^{-1}_{\chi\pi} \mu^{}_{\chi_M}F )$.
Thus the vertical arrow $\beta$ in \eqref{cd} is an isomorphism on $\pi^{-1}(V)$. 
By the assumption (ii), the vertical arrow $\alpha$ in \eqref{cd} is an isomorphism on $\pi^{-1}(V)$. In fact there is a following distinguished triangle:
\[\begin{tikzcd}
	{\omega_{Y/X} \otimes f^{-1}F} & {f^{!}F} & {G} & {{},}
	\arrow[from=1-1, to=1-2]
	\arrow[from=1-2, to=1-3]
	\arrow["{+1}", from=1-3, to=1-4]
\end{tikzcd}\]
where $G:= R\dot{q}_\ast \mu_Y F$ and $q$ is the projection $T^\ast_Y(X \times Y) \to Y$. Applying $\mu_{\chi^N}$ we obtain the following distinguished triangle:

\[\begin{tikzcd}
	{\mu_{\chi^N}(\omega_{Y/X} \otimes f^{-1}F)} & {\mu_{\chi^N}(f^{!}F)} & {\mu_{\chi^N} (G)} & {{}.}
	\arrow[from=1-1, to=1-2]
	\arrow[from=1-2, to=1-3]
	\arrow["{+1}", from=1-3, to=1-4]
\end{tikzcd}\]
Then by \cite[Theorem 3.6]{HPY}, \cite[Proposition 5.5.4]{KS90} and the assumption (ii), $\supp(\mu_{\chi^N} (G)) \cap \pi^{-1}(V) \subset S^\ast_{\chi^N} \cap C_{\chi^{N\ast}}(\dot{q}_\pi {}^t \dot{q}^{\prime-1} C_{T^\ast_Y X}(\mS(F))) \cap \pi^{-1}(V) = S^\ast_{\chi^N} \cap C_{\chi^{N\ast}}(f^\sharp_\infty(\mS(F))) \cap \pi^{-1}(V)=\emptyset$.

Therefore, it suffices to prove one of the horizontal arrows in (\ref{cd}). Consider following chain of isomorphisms:
\begin{eqnarray*}
T_\chi f^{-1}\nu_{\chi^M} (F) &\simeq& T_\chi f^{-1} s^{-1}_X R\Gamma_{\Omega_X} p^{-1}F\\
&\simeq& s^{-1}_Y \tilde{f}^{\prime-1} R\Gamma_{\Omega_X} p^{-1}F\\
&\simeq& s^{-1}_Y \tilde{f}^{\prime-1} Rj_{X\ast}\tilde{p}_X^{-1} F
\end{eqnarray*}
and
\begin{eqnarray*}
\nu_{\chi^N}(f^{-1}F)&\simeq& s^{-1}_Y Rj_{Y\ast} \tilde{f}^{-1}\tilde{p}^{-1}_X F\\
&\simeq& s^{-1}_Y(\tilde{f}^{\prime !} Rj_{X\ast} \tilde{p}^{-1}_X F \otimes \omega_{\tilde{Y}/\tilde{X}}^{\otimes -1}).
\end{eqnarray*}
Consider a distinguished triangle:
\[\begin{tikzcd}
	{\omega_{\tilde{Y}/\tilde{X}} \otimes \tilde{f}^{\prime-1}Rj_{X\ast}\tilde{p}^{-1}_XF} & {\tilde{f}^{\prime!}Rj_{X\ast}\tilde{p}^{-1}_XF} & H & {{}.}
	\arrow[from=1-1, to=1-2]
	\arrow[from=1-2, to=1-3]
	\arrow["{+1}", from=1-3, to=1-4]
\end{tikzcd}\]
Then $H$ is supported in $S_{\chi^N}$. 
And it is enough to prove $\pi^{-1}_{}(V) \cap \mS((s_Y^{-1}H)^\wedge) = \emptyset$. 
Recall the identification  \cite[Theorem 3.3]{HPY}.
 Thus by the identification (i.e., We perform the proof in the space $T^\ast S_{\chi^N} $.),  it is enough to prove $\pi^{-1}_{}(V) \cap \mS(H)= \emptyset$. By \cite[Corollary 6.4.4]{KS90} it suffices to show:
\begin{claim}
for any $q_0 \in \pi^{-1}(V)$, there exists $q \in S_\chi \times_{\tilde{Y}} T^\ast \tilde{Y}$ such that $q_0 = {}^t s_Y^\prime(q) \in \pi^{-1}(V)$, and $\tilde{f}^\prime$ is non-characteristic for $Rj_{X\ast} \tilde{p}^{-1}_XF$ at $q$.
\end{claim}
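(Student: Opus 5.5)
The Claim will be proved by lifting $q_0$ along ${}^ts_Y'$ and reading off non-characteristicity from the microsupport of $Rj_{X\ast}\tilde{p}^{-1}_XF$, computed in the coordinates of the multi-normal deformation.

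Recall $\tilde{X}$ is the multi-normal deformation of $X$ along $\chi^M$, with parameters $t=(t_1,\dots,t_\ell)$. $\Omega_X=\{t_1>0,\dots,t_\ell>0\}$, $j_X:\Omega_X\hookrightarrow\tilde{X}$ is the inclusion, $\tilde{p}_X$ is the restriction of $p$ to $\Omega_X$, and $s_X:S_{\chi^M}\hookrightarrow\tilde{X}$ is the zero section $\{t=0\}$; the same notation is used for $Y$. Since $\chi^M,\chi^N$ are of normal crossing type and we reduced to $f$ a closed embedding, we may choose local coordinates with $f(N_j)\subset M_j$ such that $\tilde{f}':\tilde{Y}\to\tilde{X}$ is a closed embedding, linear in the normal variables and the identity in $t$.

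\textbf{Step 1 (choice of $q$).} Given $q_0\in\pi^{-1}(V)\subset S^\ast_{\chi^N}$, I will take $q\in S_{\chi^N}\times_{\tilde{Y}}T^\ast\tilde{Y}$ to be $q_0$ with $t$-covariables $\tau=0$. Then ${}^ts_Y'(q)=q_0$ holds by definition. The choice $\tau=0$ is natural because ${}^ts_Y'$ forgets the $\tau$-components, and the subsequent cone computation is cleanest there.

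\textbf{Step 2 (estimate $\mS(Rj_{X\ast}\tilde{p}_X^{-1}F)$ over $t=0$).} On $\Omega_X$ one has $\mS(\tilde{p}^{-1}_XF)=\tilde{p}_{X\pi}$-image of $\mS(F)$ with $\tau=0$. For $Rj_{X\ast}$, I will use \cite[Theorem 6.3.1]{KS90}: near the boundary strata $\{t_J=0\}$ the microsupport is contained in the sum of $\overline{\mS(\tilde{p}_X^{-1}F)}$ and the conormals $\{\tau_j\le 0\}$ of the boundary, closed up in the $\hat{+}$ sense. Restricted to $t=0$ with $\tau=0$, the limit points of this set are obtained from sequences in $\mS(F)$ rescaled by the multi-homogeneous action. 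By \cite[Lemma 1.4]{HPY} and Lemma~\ref{Sec2: formula1}, these limit points are exactly points of $C_{\chi^{M\ast}}(\mS(F))$ in the fibre coordinates, plus contributions with $\tau$ involving $\langle x,\xi\rangle$-type terms. The key identification to establish is
\[
\mS(Rj_{X\ast}\tilde{p}_X^{-1}F)\cap\{t=0\}\subset\bigl\{(x,0;\xi,\tau)\;:\;(x;\xi)\in C_{\chi^{M\ast}}(\mS(F)) \text{ or arising from } \mS(F)\cap\pi^{-1}T^\ast_MX \text{ under rescaling}\bigr\}.
\]

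\textbf{Step 3 (non-characteristicity).} We must check that ${}^t\tilde{f}'^{-1}(q)$ avoids the microsupport estimate of Step 2 away from the zero section. A covector $p$ over $\tilde{f}'(y)$ with ${}^t\tilde{f}'(p)=q$ is $q$ plus an element of the conormal of $\tilde{Y}$ in $\tilde{X}$, i.e. of $\tilde{Y}\times_{\tilde{X}}T^\ast_{\tilde{Y}}\tilde{X}$. There are two cases.
\begin{enumerate}[label=(\alph*)]
\item If $p$ comes from a limit in $C_{\chi^{M\ast}}(\mS(F))\cap S^\ast_{\chi^M}$, then non-characteristicity is exactly assumption (iv).
\item If the limit escapes to infinity, or lands where the $M$-directions degenerate, one normalises the covectors as in the proof of Lemma~\ref{non-char}. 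This produces either a point of $T^\ast_MX\cap\mS(F)$ contradicting (iii)/(v), or a point of $f^\sharp_\infty(\mS(F))$ whose cone meets $\pi^{-1}(V)$, contradicting (ii). Assumption (i) handles the directions transverse to $M$ coming from $F$ itself.
\end{enumerate}

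The main obstacle is Step 2 together with case (b) of Step 3. The boundary $t=0$ is a normal crossing divisor, so the $Rj_{\ast}$ estimate involves several $\tau_j$'s and mixed rescalings $c^{(\hat{J}_k)}_n$. Controlling these limits requires the same bookkeeping of products $c^{(J)}_n$ as in Lemma~\ref{non-char}. I will first attempt it assuming $\ell=1$ (the case of \cite[Theorem 6.7.1]{KS90}) and then induct on strata $\{t_J=0\}$.
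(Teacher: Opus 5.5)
Step~1 is harmless: the claim asks for one lift of $q_0$, and the paper's argument works for any $\tau$. The genuine gap is in what Step~3 sets out to prove.

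In the paper, ``$\tilde{f}^{\prime}$ is characteristic at $q$'' means, via case (b) of \cite[Proposition 6.2.4 (iii)]{KS90}, the following. There exist covectors in $\mS(Rj_{X\ast}\tilde{p}^{-1}_XF)$ over $(x_n,t_n)$ and points $(y_n,\hat{t}_n)\in\tilde{Y}$ such that:
\begin{itemize}
\item $|\xi_n|+|\tau_n|\to\infty$;
\item $|(x_n,t_n)-\tilde{f}(y_n,\hat{t}_n)|\,(|\xi_n|+|\tau_n|)\to 0$;
\item the images under ${}^t\tilde{f}^{\prime}(y_n,\hat{t}_n)$ converge to $q$.
\end{itemize}
Non-characteristicity at $q$ is just the absence of such sequences escaping to infinity.

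Your case (a) instead asks that ${}^t\tilde{f}^{\prime-1}(q)$ avoid the non-zero microsupport. That is not needed, and it is false in general. Finite points of $\mS(Rj_{X\ast}\tilde{p}^{-1}_XF)$ mapping to $q$ are typically present: they carry $\mS(\tilde{f}^{\prime-1}Rj_{X\ast}\tilde{p}^{-1}_XF)$, i.e.\ the support of $\mu_{\chi^N}$ near $q_0$. Hypothesis (iv), which concerns $f_{\chi\pi}$ on $S^\ast_{\chi^M}$, cannot exclude them; in the paper it serves only for the arrow $\beta$.

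Step~2 has the matching defect. A pointwise description of the microsupport over $\{t=0,\ \tau=0\}$ throws away exactly what is needed. The characteristic sequences live over $t>0$, and their $\tau_n$ may be unbounded, since \eqref{6.7.3} controls only $\tau_n+\partial_t\tilde{g}\cdot\xi_n$. What is required is the sequence-level lifting of Lemma~\ref{lem1} to rescaled points of $\mS(F)$. This must be combined with \eqref{sec3: calc1}, which turns ${}^t\tilde{f}^{\prime}$ at $(y_n,\hat{t}_n)$ into ${}^tf^{\prime}$ at rescaled points.

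So the whole proof lives in your case (b), and the mechanism you sketch there is not the one that closes it. The paper's chain is:
\begin{itemize}
\item First, $|\xi_n|\to\infty$, because the $\tau_n$ cannot blow up alone by \eqref{6.7.3}.
\item After lifting, hypothesis (i) bounds $(t_n\xi^{(0)}_n,\xi^{(1)}_n,\dots,\xi^{(\ell)}_n)$. Hence $|\xi^{(0)}_n|\to\infty$ while the other components stay bounded.
\item Hypothesis (v) forces $t_n\xi^{(0)}_n\to 0$.
\item Normalizing $(t_n\xi^{(0)}_n,t_nx^{(1)}_n,\dots,t_nx^{(\ell)}_n)$ by $t_n|\xi^{(0)}_n|$ gives a point of $C_{T^\ast_MX}(\mS(F))$ with normal component $(\xi^{(0)}_2,0,\dots,0)$, where $\xi^{(0)}_2\neq 0$.
\item Hypothesis (iii) then contradicts the first line of \eqref{condition3} divided by $|\xi^{(0)}_n|$.
\end{itemize}

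This normalization isolates the $\xi^{(0)}$-direction, whereas the normalization by total length in Lemma~\ref{non-char} kills it. The contradiction comes from a normal-cone point, not from a point of $T^\ast_MX\cap\mS(F)$. Hypothesis (ii) plays no role here; it is used for $\alpha$. No induction on strata is needed, since all $t_j$ are handled at once through $t_n=\prod_k t_{k,n}$.

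Finally, you cannot assume $\tilde{f}^{\prime}$ is linear in the normal variables. $Y$ and the $M_j$ need not be simultaneously linearizable, which is why the paper keeps general $\tilde{g}_k$.
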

We choose local coordinates systems $(x)=(x^{(0)}, x^{(1)}, \dots ,x^{(\ell)})$ on $X$, $(y)=(y^{(0)}, y^{(1)}, \dots ,y^{(\ell)})$ on $Y$ such that $M_k =\{x^i=0\, (i \in I_k)\}$, $N_k =\{y^i=0\,(i \in I_k)\}$. We denote by 
\[(x^{(0)}, x^{(1)}, \dots ,x^{(\ell)}, t_1,\dots ,t_\ell; \xi^{(0)}, \xi^{(1)}, \dots ,\xi^{(\ell)}, \tau_1,\dots, \tau_\ell),
\] 
the coordinates on $\tilde{X}$, by 
\[
(y^{(0)}, y^{(1)}, \dots ,y^{(\ell)}, t_1,\dots ,t_\ell; \eta^{(0)}, \eta^{(1)}, \dots, \eta^{(\ell)}, \tau_1,\dots, \tau_\ell),
\]
the coordinates on $\tilde{Y}$. We write:
\[f(y)=(g_0(y), g_1(y), \dots ,g_\ell(y)),\]
\[\tilde{f}(y, t_1,\dots, t_\ell)=(\tilde{g}_0(y, t_1, \dots, t_\ell), \tilde{g_1}(y,t_1, \dots, t_\ell), \tilde{g_\ell}(y ,t_1,\dots ,t_\ell), t_1,\dots,  t_\ell).\]
Hence:
\[
t_{J_k}\tilde{g}_k(y^{(0)}, y^{(1)}, \dots ,y^{(\ell)},t_1,\dots, t_\ell)= g_k(y^{(0)}, t_{J_1 }y^{(1)}, \dots,t_{J_\ell}y^{(\ell)}),
\]
\[
\tilde{g}_0(y^{(0)}, y^{(1)}, \dots ,y^{(\ell)},t_1,\dots, t_\ell)= g_0(y^{(0)}, t_{J_1 }y^{(1)}, \dots,t_{J_\ell}y^{(\ell)}).
\]
Note that:
\begin{eqnarray}\label{sec3: calc1}
&{}&{}^tf^\prime (y^{(0)}, ty^{(1)}, \dots ,ty^{(\ell)})\cdot(t \xi^{(0)},\xi^{(1)}, \dots,\xi^{(\ell)})\\
&=& \begin{pmatrix}
\tilde{g}_{0, y^{(0)}}(y,t)\cdot t\xi^{(0)} + \tilde{g}_{1, y^{(0)}}(y,t)\cdot t\xi^{(1)} + \cdots+\tilde{g}_{\ell, y^{(0)}}(y,t)\cdot t\xi^{(\ell)} \\
\tilde{g}_{0, y^{(1)}}(y,t)\cdot \xi^{(0)} + \tilde{g}_{1, y^{(1)}}(y,t)\cdot \xi^{(1)} + \cdots+\tilde{g}_{\ell, y^{(1)}}(y,t)\cdot \xi^{(\ell)} \\
\cdots\\
\tilde{g}_{0, y^{(\ell)}}(y,t)\cdot \xi^{(0)} + \tilde{g}_{1, y^{(\ell)}}(y,t)\cdot \xi^{(1)} + \cdots+\tilde{g}_{\ell, y^{(\ell)}}(y,t)\cdot \xi^{(\ell)}
\end{pmatrix}\nonumber.
\end{eqnarray}
We set $q = (y_0, 0,0,0,0; 0, \eta_1,\dots ,\eta_\ell, \tau_1, \dots,\tau_\ell)$, $x_0 = g_0(y_0, 0,0)$, $q_0 = (y_0; \eta_1, \dots,\eta_\ell) \in \pi^{-1}(V)$.
Let us assume $\tilde{f}^\prime$ is characteristic for $Rj_{X\ast}\tilde{p}^{-1}_X F$ at $q$.
Applying \cite[Proposition 6.2.4 (iii) (b)]{KS90} ($A = T^\ast_{\tilde{Y}}\tilde{Y}$, $B= \mS(Rj_{X\ast} \tilde{p}^{-1}_X F)$) we find sequences:
\[
\{(y^{(0)}_n, y^{(1)}_n, \dots,y^{(\ell)}_n, \hat{t}^{1}_n, \dots, \hat{t}^{\ell}_n)\} \subset \tilde{Y}
\]
and
\[
\{(x^{(0)}_n, x^{(1)}_n, \dots, x^{(\ell)}_n, t^{1}_n,\dots ,t^{\ell}_n; \xi^{(0)}_n, \xi^{(1)}_n, \dots,\xi^{(\ell)}_n, \tau_n^{1},\dots ,\tau_n^{\ell})\} \subset \mS(Rj_{X\ast} \tilde{p}^{-1}_X F),
\]
such that:
\begin{equation}
  {}
  \begin{dcases}
    (y^{(0)}_n, y^{(1)}_n, \dots, y^{(\ell)}_n, \hat{t}^{1}_n, \dots, \hat{t}^\ell_n) \xrightarrow{n} (y_0, 0,0),\\
(x^{(0)}_n, x^{(1)}_n, \dots,x^{(\ell)}_n, t^1_n,\dots ,t^\ell_n)\xrightarrow{n} (x_0, 0,0),
  \end{dcases}
\end{equation}
\begin{equation}\label{6.7.3}
  {}
  \begin{dcases}
    \tau^1_n +\sum_{k=0}^\ell \tilde{g}_{k,t_1}(y^{(0)}_n, y^{(1)}_n, \dots, y^{(\ell)}_n, \hat{t}^{1}_n, \dots, \hat{t}^\ell_n)\cdot \xi_n^{(k)}  \xrightarrow{n} \tau_1,\\
    \cdots\\
    \tau^\ell_n +\sum_{k=0}^\ell \tilde{g}_{k,t_\ell}(y^{(0)}_n, y^{(1)}_n, \dots, y^{(\ell)}_n,\hat{t}^{1}_n, \dots, \hat{t}^\ell_n)\cdot  \xi_n^{(k)}  \xrightarrow{n} \tau_\ell,
  \end{dcases}
\end{equation}
\begin{equation}\label{condition3}
  {}
  \begin{dcases}
\sum_{k=0}^\ell \tilde{g}_{k, y^0}(y^{(0)}_n, y^{(1)}_n, \dots, y^{(\ell)}_n, \hat{t}^{1}_n, \dots, \hat{t}^\ell_n)\cdot \xi_n^{(k)}  \xrightarrow{n} 0,\\
\sum_{k=0}^\ell \tilde{g}_{k, y^1}(y^{(0)}_n, y^{(1)}_n, \dots, y^{(\ell)}_n, \hat{t}^{1}_n, \dots, \hat{t}^\ell_n)\cdot \xi_n^{(k)}  \xrightarrow{n}\eta_1,\\
\cdots\\
\sum_{k=0}^\ell \tilde{g}_{k, y^\ell}(y^{(0)}_n, y^{(1)}_n, \dots, y^{(\ell)}_n, \hat{t}^{1}_n, \dots, \hat{t}^\ell_n)\cdot \xi_n^{(k)}  \xrightarrow{n}\eta_\ell,
  \end{dcases}
\end{equation}

\begin{eqnarray}\label{6.7.6}
|\xi^{(0)}_n|+\cdots +|\xi^{(\ell)}_n|+|\tau_n^1|+\cdots +|\tau_n^\ell| \xrightarrow{n} \infty,
\end{eqnarray}

\begin{eqnarray}\label{6.7.7}
|(x^{(0)}_n, x^{(1)}_n, \dots,x^{(\ell)}_n, t^1_n,\dots ,t^\ell_n)- \tilde{f}(y^{(0)}_n, y^{(1)}_n, \dots, y^{(\ell)}_n, \hat{t}^{1}_n, \dots, \hat{t}^\ell_n)|\cdot(\sum_{k=0}^\ell|\xi^{(k)}_n|+\sum_{k=1}^\ell|\tau_n^k|) \xrightarrow{n} 0.
\end{eqnarray}
By \eqref{6.7.3}, \eqref{6.7.6} we obtain:
\begin{eqnarray}\label{infty}
|\xi^{(0)}_n| + |\xi^{(1)}_n| + \cdots +|\xi^{(\ell)}_n| \xrightarrow{n} \infty.
\end{eqnarray}
In fact, assume by contradiction that $|\xi^{(0)}_n| + |\xi^{(1)}_n| + \cdots +|\xi^{(\ell)}_n|$ is bounded. By \eqref{6.7.6}, $|\tau_n^1|+\cdots +|\tau_n^\ell| \to +\infty$. We may assume $|\tau_n^k|  \to +\infty$ for some $1 \leq k\leq \ell$. Since $|\xi^{(0)}_n| + |\xi^{(1)}_n| + \cdots +|\xi^{(\ell)}_n|$ is bounded,  $|\tau_n^k|  \to +\infty$ contradicts to \eqref{6.7.3}.

Moreover \eqref{6.7.7} implies in particular:
\begin{eqnarray}\label{6.7.9}
|t_n^j-\hat{t}^j_n|\cdot(|\xi^{(0)}_n|+|\xi^{(1)}_n|+ \cdots +|\xi^{(\ell)}_n|+|\tau_n^1|+ \cdots +|\tau_n^\ell|)\xrightarrow{n} 0.
\end{eqnarray}
Then by Lemma \ref{lem1} below, we may assume $t^j_n >0$ and 
\[
(x^{(0)}_n, -t_n x^{(1)}_n, \dots, -t_n x^{(\ell)}_n ;t_n\xi_n^{(0)}, \xi^{(1)}_n, \dots,\xi^{(\ell)}_n ) \in \mS(F)
\]
(keep in mind of the identification \cite[Theorem 3.3]{HPY}), where $t_n=\prod_{k=1}^\ell t_{k,n}  $. By \eqref{sec3: calc1}, \eqref{condition3} and \eqref{6.7.9} we have:

\begin{eqnarray}\label{6.7.10}
{}^t f^\prime(y^{(0)}_n, -t_n y^{(1)}_n,\dots ,-t_n y^{(\ell)}_n)\cdot (t_n\xi^{(0)}_n, \xi^{(1)}_n,\dots ,\xi^{(\ell)}_n)\xrightarrow{n} (0, \eta_1,\dots ,\eta_\ell).
\end{eqnarray}
Since $f$ is non-characteristic for $F$ at $\pi_{}(q_0)$ (hypothesis (i)), this implies:
\begin{eqnarray}\label{bdd}
\{
|t_n \xi^{(0)}_n|+|\xi^{(1)}_n|+\cdots+|\xi^{(\ell)}_n|
\}
\text{ is bounded.}
\end{eqnarray}
Thus by \eqref{infty} and \eqref{bdd}, $|\xi^{(0)}_n |\xrightarrow{n}  \infty$.
Since the sequence $\{(t_n \xi^{(0)}_n, \xi^{(1)}_n,\dots ,\xi^{(\ell)}_n)\}$ is bounded, we may assume after extracting a subsequence so that $\xi^{(1)}_n \xrightarrow{n} \xi^{(1)}_1$, \dots , $\xi^{(\ell)}_n \xrightarrow{n} \xi^{(\ell)}_1$, $t_n \xi^{(0)}_n\xrightarrow{n} \xi^{(0)}_1$. Then by \eqref{6.7.10}
\[
(y^{(0)},0, 0; \xi^{(0)}_1, \xi^{(1)}_1,\dots,\xi^{(\ell)}_1) \in {}^t f^{\prime-1}(V)\cap f^{-1}_\pi (\mS(F)).
\]
By the hypothesis (v), we get $\xi^{(0)}_1=0$. Thus $(t_n\xi^{(0)}_n, \xi^{(1)}_n, \dots ,\xi_n^{(\ell)})\xrightarrow{n} (0, \xi^{(1)}_1,\dots, \xi^{(\ell)}_1)$. We may assume that $\xi^{(0)}_n/|\xi^{(0)}_n|$ has a limit $\xi^{(0)}_2$. We obtain a sequence \[\{(x^{(0)}_n, -t_n x^{(1)}_n, \dots,-t_n x^{(\ell)}_n ;t_n\xi_n^{(0)}, \xi^{(1)}_n, \dots,\xi^{(\ell)}_n )\}\] in $\mS(F)$ such that:
\begin{eqnarray*}
(x^{(0)}_n, -t_n x^{(1)}_n, \dots,-t_n x^{(\ell)}_n) \xrightarrow{n} (x_0, 0,\dots ,0),\\
(t_n\xi_n^{(0)}, \xi^{(1)}_n, \dots,\xi^{(\ell)}_n )\xrightarrow{n} (0, \xi^{(1)}_1,\dots,\xi^{(\ell)}_1),\\
\frac{1}{t_n |\xi_{n}^{(0)}|}(t_n\xi^{(0)}_n, t_n x^{(1)}_n, \dots,t_n x^{(\ell)}_n)\xrightarrow{n} (\xi^{(0)}_2,0,\dots, 0).
\end{eqnarray*}

Then $(x_0, 0,\dots, 0; \xi^{(0)}_2, \xi^{(1)}_1,\dots,\xi^{(\ell)}_1) \in C_{T^\ast_MX}(\mS(F))$, and hypothesis (iii) implies $g_{0, y^0}(y^0,0,0)\cdot\xi^{(0)}_2=\tilde{g}_{y^0}(y^0,0,0)\cdot\xi_2^{(0)}\not=0$. By \eqref{condition3} we have (recall that $|\xi^0_n| \xrightarrow{n} \infty$):
\[
\sum_{k=0}^\ell \tilde{g}_{k, y^0}(y^{(0)}_n, y^{(1)}_n, \dots, y^{(\ell)}_n, t_n^1,\dots, t_n^\ell)\cdot \frac{\xi^{(k)}_n}{|\xi^{(0)}_n|} \xrightarrow{n} 0.
\]
We get a contradiction, since $|\xi^{(1)}_n|$, $|\xi^{(2)}_n|$ , \dots and $|\xi^{(\ell)}_n|$ are bounded.
\endproof
\begin{lem}\label{lem1}
Suppose there exists a sequence
\[
(x^{(0)}_n, x^{(1)}_n, \dots, x^{(\ell)}_n, t^{1}_n,\dots ,t^{\ell}_n; \xi^{(0)}_n, \xi^{(1)}_n, \dots,\xi^{(\ell)}_n, \tau_n^{1},\dots ,\tau_n^{\ell})  
\]
in $\mS(Rj_{X\ast} \tilde{p}_{X}^{-1} F)$ which satisfies the conditions \eqref{6.7.3}--\eqref{6.7.7}. 
Then we find sequences
\[
\{(x^{(0)}_n, x^{(1)}_n, \dots, x^{(\ell)}_n; \xi^{(0)}_n, \xi^{(1)}_n, \dots,\xi^{(\ell)}_n)\} \subset \mS(F)
\]
and $\{(t^{1}_n,\dots ,t^{\ell}_n)\} \subset (\mathbb{R}^+)^\ell$ so that they satisfy the conditions \eqref{6.7.3}--\eqref{6.7.7}. Also since $\mS(F)$ is conic, we have
\[
(x^{(0)}_n,  x^{(1)}_n, \dots, x^{(\ell)}_n ;t_n\xi_n^{(0)}, t_n \xi^{(1)}_n, \dots, t_n \xi^{(\ell)}_n ) \in \mS(F),
\]
where $t_n=\prod_{k=1}^\ell t_{k,n} $.
\end{lem}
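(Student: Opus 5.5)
The plan is to read off the microsupport of $Rj_{X\ast}\tilde{p}_X^{-1}F$ explicitly over the open part $\Omega_X=\{t_1>0,\dots,t_\ell>0\}$ of the multi-normal deformation $\tilde{X}$, and then transport the given sequence back to $\mS(F)$. First, since $j_X:\Omega_X\hookrightarrow\tilde{X}$ is the inclusion of an open set whose boundary consists of the hypersurfaces $\{t_k=0\}$ (normal crossing type), \cite[Theorem 6.3.1]{KS90} (or its iterated version for a normal crossing open set) gives
\[
\mS(Rj_{X\ast}\tilde{p}_X^{-1}F)\subset \mS(\tilde{p}_X^{-1}F|_{\Omega_X})\,\hat{+}\,N^\ast(\Omega_X)^{a}
\]
over the boundary, and equality with $\mS(\tilde{p}_X^{-1}F|_{\Omega_X})$ on $\Omega_X$ itself. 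On $\Omega_X$ the map $\tilde{p}_X$ is a submersion; in the coordinates above it is $(x,t)\mapsto(x^{(0)},t_{J_1}x^{(1)},\dots,t_{J_\ell}x^{(\ell)})$ (up to the sign convention recorded in the statement). By \cite[Proposition 5.4.5]{KS90}, $\mS(\tilde{p}_X^{-1}F)={}^t\tilde{p}_X'\,\tilde{p}_{X\pi}^{-1}\mS(F)$. This means a point $(x,t;\xi,\tau)$ belongs to it iff $(x^{(0)},t_{J_k}x^{(k)};\xi^{(0)},t_{J_k}^{-1}\xi^{(k)})\in\mS(F)$ and $\tau$ is the explicit linear combination $\tau_j=\sum_k (\partial_{t_j}t_{J_k})x^{(k)}\cdot t_{J_k}^{-1}\xi^{(k)}$.

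Second, I would reduce to sequences lying in $\Omega_X$. Given a sequence in $\mS(Rj_{X\ast}\tilde{p}_X^{-1}F)$ satisfying \eqref{6.7.3}--\eqref{6.7.7}, each point either lies over $\Omega_X$ or lies over the boundary. In the latter case the estimate above says it is a limit of points of $\mS(\tilde{p}_X^{-1}F|_{\Omega_X})$ plus a conormal contribution in the $\tau$-directions. Since the conditions \eqref{6.7.3}--\eqref{6.7.7} are open conditions along a sequence (convergences and growth), I replace each term by a nearby point in $\Omega_X$, with $t^j_n>0$, by a diagonal argument. The $\tau$-components are only constrained through \eqref{6.7.3}, \eqref{6.7.6} and \eqref{6.7.9}. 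Along the way I check that the extra conormal term $\tau\in N^\ast$ does not destroy them; in particular, \eqref{infty} is derived from $\xi$ alone.

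Third, with the points over $\Omega_X$, I set the new sequence in $\mS(F)$ to be the image under $\tilde{p}_X$ with rescaled covectors, as above. The positive sequence $(t^1_n,\dots,t^\ell_n)$ is the given one, and I verify that \eqref{6.7.3}--\eqref{6.7.7} transfer by the chain-rule identity \eqref{sec3: calc1}. The final assertion, that scaling the covector by $t_n=\prod_k t_{k,n}$ stays in $\mS(F)$, is immediate from conicity.

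The main obstacle is the second step, namely controlling the boundary contribution of $Rj_{X\ast}$. The normal-crossing open set is not a half-space, so the single-hypersurface estimate must be iterated, and the sum $\hat{+}$ involves limits. I would first try to handle this by working locally on each stratum $\{t_k=0,\ k\in K\}$ and applying \cite[Proposition 6.3.1]{KS90}-type estimates one coordinate at a time, using that the $t$-directions are transversal to the $x$-coordinates.
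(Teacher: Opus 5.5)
Your proposal follows essentially the paper's proof: \cite[Theorem 6.3.1]{KS90} gives $\mS(Rj_{X\ast}\tilde{p}_X^{-1}F)\subset\mS(\tilde{p}_X^{-1}F)\,\hat{+}\,N^\ast(\Omega_X)$, \cite[Proposition 5.4.5]{KS90} gives $\mS(\tilde{p}_X^{-1}F)$ explicitly, and unwinding $\hat{+}$ (the paper cites \cite[Remark 6.2.8]{KS90}) with $N^\ast(\Omega_X)\subset\{(x;t;0;\tau)\}$ approximates each term, with the same $\xi$-components in the limit, by points having all $t^j>0$; a diagonal subsequence and conicity then finish the proof. Two remarks: \cite[Theorem 6.3.1]{KS90} holds for an arbitrary open subset, so no stratum-by-stratum iteration over $\{t_k=0\}$ is needed, and you should not expect to verify the $\tau$-parts of \eqref{6.7.3}--\eqref{6.7.7} for the new sequence, because the conormal contribution to $\tau$ may be unbounded --- the paper simply does not track the $\tau$'s, since afterwards only the $\xi$-conditions, $t^j_n>0$ and \eqref{infty} (already established for the original sequence) are used.
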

\proof
By \cite[Theorem 6.3.1]{KS90} we have
\[
\mS(Rj_{\Omega_X \ast} \tilde{p}_X^{-1} F) \subseteq \mS(\tilde{p}^{-1}_X F) \hat{+} N^\ast(\Omega_X).
\]
By \cite[Proposition 5.4.5]{KS90} we have
\begin{align*}
\mS(\tilde{p}^{-1}_X F) = \{(x_0, \frac{x^{(0)}}{t_{\hat{J}_1}}, \dots,\frac{x^{(\ell)}}{t_{\hat{J}_{\ell}}}; t_1, t_2,\dots, t_\ell; \xi_0, t_{\hat{J}_1}\xi^{(1)},\dots ,t_{\hat{J}_\ell}\xi^{(\ell)};\tau_1, \tau_2, \dots, \tau_\ell); \\
t_j>0, (x^{(0)}, x^{(1)}, \dots, x^{(\ell)}; \xi^{(0)}, \xi^{(1)}, \dots, \xi^{(\ell)})\in \mS(F)\}.
\end{align*}
Here we do not explicitly calculate $\tau'$s because we do not use them.
By \cite[Remark 6.2.8]{KS90} and the fact that $N^\ast(\Omega_X) \subset \{(x; t; 0; \tau)\}$, for each $n \in \mathbb{N}$, we get sequences
\[
\{(x^{(0)}_{n,m}, x^{(1)}_{n,m}, \dots, x^{(\ell)}_{n,m}; \xi^{(0)}_{n,m}, \xi^{(1)}_{n,m}, \dots,\xi^{(\ell)}_{n,m})\}_{m=1}^\infty \subset \mS(F)
\]
\[
\{(t_{1,n,m}, t_{2,n,m}, \dots, t_{\ell,n,m})\} \subset (\mathbb{R}^+)^\ell
\]
such that 
\[
(x^{(0)}_{n,m}, \frac{x^{(1)}_{n,m}}{t_{\hat{J}_1,n, m}}, \dots,\frac{x^{(\ell)}_{n,m}}{t_{\hat{J}_\ell,n, m}}; t_{1,n,m},\dots,t_{\ell,n,m}) \xrightarrow{m} (x_{n}^{(0)}, x_{n}^{(1)},\dots ,x_{n}^{(\ell)}, t_{1,n}, t_{2,n}, \dots, t_{\ell, n}), 
\]
\[
(\xi^{(0)}_{n,m}, t_{\hat{J}_1,n,m}\xi^{(1)}_{n,m}, \dots, t_{\hat{J}_\ell,n, m}\xi^{(\ell)}_{n,m}) \xrightarrow{m} ( \xi_{n}^{(0)}, \xi_{n}^{(1)}, \dots,\xi_{n}^{(\ell)}).
\]
Then extracting a subsequence we find sequences
\[
\{(x^{(0)}_{n}, x^{(1)}_{n}, \dots,x^{(\ell)}_{n}; \xi^{(0)}_{n}, \xi^{(1)}_{n}, \dots, \xi^{(\ell)}_{n})\}_{n=1}^\infty \subset \mS(F),
\]
and $\{(t_{1,n}, t_{2,n}, \dots, t_{\ell, n})\} \subset (\mathbb{R}^+)^\ell$ so that they satisfy desired conditions.
Since $\mS(F)$ is conic, we have
\[
\{(x^{(0)}_{n},  x^{(1)}_{n}, \dots,  x^{(\ell)}_{n}; t_n \xi^{(0)}_{n}, t_n \xi^{(1)}_{n}, \dots, t_n\xi^{(\ell)}_{n})\}  \subset \mS(F).
\]
\endproof
\begin{rem}
By \cite[Remark 6.2.6]{KS90}, if we replace the condition (i) of Theorem \ref{multi-inverse} with the following condition (1) then we do not need (ii).
\begin{enumerate}
\item[(1)] $f$ is non-characteristic for $F$.
\end{enumerate}
\end{rem}
\subsection{Strongly regular subanalytic sheaf case}
We study multi-microlocalizations and inverse images in the case of strongly regular subanalytic sheaves. 
\begin{theo}\label{multi-inverse-sub}
{\itshape
Let any family of submanifolds be of normal crossing type.
Let $F \in D^b(k_{X_{sa}})$ be a strongly regular along a closed conic subset $W$ and $V$ be an open subset of $T^\ast_N Y$. Assume:
\begin{enumerate}[label=(\roman*)]
\item $f$ is non-characteristic for $W$,
\item $f_{N\pi}: {}^t f_N^{\prime-1}(V) \to T^\ast_M X$ is non-characteristic for $C_{T^\ast_MX}(W)$,
\item $f_{\chi \pi}: {}^t f^{\prime-1}_\chi(\pi^{-1}(V)) \to S_{\chi^{M}}^\ast$ is non-characteristic for $C_{\chi^{M\ast}}(W)$,
\item ${}^t f^{\prime-1}(V)\cap f^{-1}_\pi (W) \subset Y \times_X T^\ast_M X$,
\item ${}^t  f'_{\chi}$ is proper on ${}^t f^{\prime-1}_\chi(\pi^{-1}(V))\cap f^{-1}_{\chi \pi}(C_{\chi^{M\ast}}(W)\cap S^\ast_{\chi^M})$.
\end{enumerate}
Then
\[R{}^t \kern -2pt f_{\chi!}^{\prime}(\omega_{N/M}\otimes f^{-1}_{\chi \pi}\mu_{\chi^M}(F))|_{\pi^{-1}(V)} \to \mu_{\chi^N}(\omega_{Y/X} \otimes f^{-1}F)|_{\pi^{-1}(V)}\]
and
\[\mu_{\chi^N}(f^! F)|_{\pi^{-1}(V)} \to R{}^tf^{\prime}_\chi f^!_{\chi\pi} \mu_{\chi^M}(F)|_{\pi^{-1}(V)}\]
are isomorphisms.
}
\end{theo}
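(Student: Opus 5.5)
The plan is to reduce to the classical Theorem \ref{multi-inverse} by means of the ind-presentation provided by strong regularity, and then to conclude with conservativity of $J$, exactly as in the proofs of Theorem \ref{estimate1} and Theorem \ref{sec 2: multi-estimate}. The statement is local on $Y$, and the source of the morphisms is compatible with restriction. So we may fix $y_0\in Y$ and replace $F$ by $F'$, which is isomorphic to $F$ near $f(y_0)$. By strong regularity along $W$ we then have $J(F')\simeq \indlim{i} J(R\rho_*F_i)$ with $F_i\in D^{[a,b]}_{\Rc}(k_X)$ and $\mS(F_i)\subset W$ over a neighborhood of $f(y_0)$.

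The first step is to verify that each $F_i$ satisfies hypotheses (i)--(vi) of Theorem \ref{multi-inverse}. This follows from the monotonicity of all the relevant operations in $\mS(F_i)\subset W$. Non-characteristicity of $f$ for $W$ gives (i) in the strong form (1) of the Remark following Lemma \ref{lem1}, so (ii) of Theorem \ref{multi-inverse} is not needed. Since $C_{T^\ast_MX}(\mS(F_i))\subset C_{T^\ast_MX}(W)$ and $C_{\chi^{M\ast}}(\mS(F_i))\subset C_{\chi^{M\ast}}(W)$, hypotheses (ii) and (iii) of the present theorem give (iii) and (iv) there. Condition (iv) here gives (v) there. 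Condition (v) here gives (vi) there, because properness on a closed set passes to closed subsets. Hence for every $i$ both morphisms of Theorem \ref{multi-inverse} are isomorphisms on $\pi^{-1}(V)$ for $F_i$.

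The second step is to pass to the limit. All functors in the diagram \eqref{cd} commute with the formation of $\indlim{i}$ after applying $J$. For $f^{-1}$, $f^!$ and $\omega\otimes$ this is \cite[Proposition 2.3.2]{KSW}; for $\mu_\chi$ it is \cite[Theorem 3.3]{KS01mic}; the remaining operations $f^{-1}_{\chi\pi}$, $f^!_{\chi\pi}$, $R{}^tf'_{\chi!}$ and $R{}^tf'_{\chi\ast}$ are handled again by \cite[Proposition 2.3.2]{KSW}. The canonical morphisms of \cite[Proposition 2.5]{H} should be natural, so that $J$ of each horizontal arrow for $F'$ is $\indlim{i}$ of the corresponding arrow for $R\rho_*F_i$. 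A filtrant ``$\varinjlim$'' of isomorphisms is an isomorphism, and $J$ is conservative by \cite[Corollary 15.4.4]{KS06}, so the arrows for $F'$ are isomorphisms on $\pi^{-1}(V)$. The same then holds for $F$ over a neighborhood of $y_0$.

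I expect the main obstacle to be the commutation of $J$ with the proper direct image $R{}^tf'_{\chi!}$ and with $R{}^tf'_{\chi\ast}$. For subanalytic sheaves, $Rf_{!!}$ and $Rf_\ast$ commute with ind-limits only under properness on supports. I would first try to use the properness hypothesis (v) together with the uniform support bound $\supp \mu_{\chi^M}(F_i)\subset C_{\chi^{M\ast}}(W)\cap S^\ast_{\chi^M}$ from \cite[Theorem 3.6]{HPY}, which is also the support bound for $\mu^{sa}_{\chi^M}(F)$ by Theorem \ref{estimate1}. With these, the maps ${}^tf'_\chi$ are proper on a common closed set containing all supports, so $R{}^tf'_{\chi!}$ and $R{}^tf'_{\chi\ast}$ agree there and commute with $\indlim{}$. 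An alternative that bypasses this is to show only the first isomorphism and to reduce the second one to it. This works as in the proof of Theorem \ref{multi-inverse}: Theorem \ref{cor6.4.4} makes $\alpha$ an isomorphism, and the support estimate makes $\beta$ one.
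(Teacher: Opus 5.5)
Your argument is correct in outline, but it takes a different route from the paper.

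\textbf{The paper's route.} The paper never applies Theorem \ref{multi-inverse} to the $F_i$; instead it reruns that proof at the subanalytic level.
\begin{enumerate}
\item The isomorphism $\omega_{N/M}\otimes f^{-1}_{\chi\pi}\mu_{\chi^M}F\simeq f^!_{\chi\pi}\mu_{\chi^M}F$ comes from the microsupport bound of Theorem \ref{sec 2: multi-estimate} together with (iii).
\item The arrow $\alpha$ is an isomorphism by (i) and \cite[Proposition 5.3.8]{Lu2}.
\item Set $G=R\Gamma_{\Omega_X}p_X^{-1}F$. The remaining horizontal arrow is handled by showing, via Theorem \ref{cor6.4.4}, that the cone of $\omega_{\tilde Y/\tilde X}\otimes\tilde f^{\prime-1}G\to\tilde f^{\prime!}G$ has no microsupport over $\pi^{-1}(V)$.
\item The needed non-characteristicity of $\tilde f'$ is obtained by redoing the sequence argument of Theorem \ref{multi-inverse} with $\mS(F)$ replaced by $W$, using Lemma \ref{gamesti}.
\end{enumerate}
So in the paper the ind-presentation enters only through microsupport estimates (via \cite[(4.1)]{KS01mic}), and the geometric argument is done a second time.

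\textbf{Your route.} You use the classical theorem as a black box. Your check that its hypotheses are monotone under $\mS(F_i)\subset W$ is right, including the use of the Remark to dispense with (ii). The price is a set of formal but necessary verifications.
\begin{itemize}
\item $R\rho_*$ must commute with each functor of \eqref{cd} on $\Rc$ objects.
\item These comparison isomorphisms must be compatible with the canonical morphisms of \cite[Proposition 2.5]{H}. Otherwise you only learn that source and target are abstractly isomorphic, not that the given arrow is.
\item The direct images must be controlled. Your fix is the right one: by Theorem \ref{estimate1} and \cite[Theorem 3.6]{HPY}, all supports lie in one closed set on which ${}^tf'_\chi$ is proper by (v). Hence $R{}^tf'_{\chi\ast}\simeq R{}^tf'_{\chi!!}$ there, and this also gives $R\rho_*R{}^tf'_{\chi!}\simeq R{}^tf'_{\chi!!}R\rho_*$ termwise.
\end{itemize}
What you gain is that you never need microsupports of subanalytic sheaves or the delicate sequence computation. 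The same device also transfers any such isomorphism statement, with hypotheses monotone in the microsupport, from $\Rc$ sheaves to strongly regular ones.

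\textbf{Two corrections to your fallback.}
\begin{itemize}
\item Theorem \ref{cor6.4.4} only gives an isomorphism after applying $\rho^{-1}$, and $\rho^{-1}$ is not conservative. So it cannot show that $\alpha$ is an isomorphism of subanalytic sheaves. Use \cite[Proposition 5.3.8]{Lu2} as the paper does, or your termwise limit argument.
\item For $\beta$, the support estimate is not enough on its own. You also need the isomorphism $\omega_{N/M}\otimes f^{-1}_{\chi\pi}\mu_{\chi^M}F\simeq f^!_{\chi\pi}\mu_{\chi^M}F$, which comes from Theorem \ref{sec 2: multi-estimate} and (iii).
\end{itemize}
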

\proof
We may assume $f$ is a closed embedding. 
By Theorem \ref{sec 2: multi-estimate}, \cite[Corollary 6.4.4]{KS90} and the assumption (iii), $\omega_{N/M} \otimes f^{-1}_{\chi\pi} \mu^{}_{\chi^M}F \to f^{!}_{\chi\pi} \mu^{}_{\chi^M}F$ is an isomorphism on ${}^t f^{\prime-1}_\chi (\pi^{-1}(V))$. 

By the assumption (i), the vertical arrow $\alpha$ in \eqref{cd} is an isomorphism. In fact, by the assumption (i), $f$ is  non-characteristic for $\mS(F)$. By \cite[Proposition 5.3.8]{Lu2}, $\omega_{Y/X} \otimes f^{-1}F \to f^{!}F$ is an isomorphism.
Applying $\mu_{\chi^N}$ we have an isomorphism $\mu_{\chi^N}(\omega_{Y/X} \otimes f^{-1}F) \to \mu_{\chi^N}(f^{!}F)$.

Thus it is enough to show the first morphism in the theorem is an isomorphism. In particular, it is enough to show
\[
R{}^t \kern -2pt f_{\chi!}^{\prime}(\omega_{N/M}\otimes f^{-1}_{\chi \pi}\mu_{\chi_X}(F))|_{\pi^{-1}(V)} \to \mu_{\chi_Y}(\omega_{Y/X} \otimes f^{-1}F)|_{\pi^{-1}(V)}\simeq \mu_{\chi_Y}^{}(f^{!}F)|_{\pi^{-1}(V)}
\]
is an isomorphism. We have
\begin{eqnarray}
T_\chi f^{-1}\nu^{sa}_{\chi_X} (F) &\simeq& T_\chi f^{-1} s^{-1}_X R\Gamma_{\Omega_X} p^{-1}_XF\\
&\simeq& s^{-1}_Y \tilde{f}^{\prime-1} R\Gamma_{\Omega_X} p^{-1}_XF
\end{eqnarray}
and
\begin{eqnarray}
\nu_{\chi_Y}^{sa}(f^{!}F)&\simeq& s^{-1}_YR\Gamma_{\Omega_Y} p^{-1}_Y  f^{!}F\\
&\simeq& s^{-1}_Y \tilde{f}^{\prime!}R\Gamma_{\Omega_Y}p^{-1}_X  F.
\end{eqnarray}
Since there is a natural morphism $\omega_{\tilde{Y}/\tilde{X}} \otimes \tilde{f}^{\prime-1}G \to \tilde{f}^{\prime!} G$ for $G$ we obtain the following distinguished triangle:
\[\begin{tikzcd}
	{\omega_{\tilde{Y}_N/ \tilde{X}_M} \otimes \rho^{-1}\tilde{f}^{\prime-1}R\Gamma_{\Omega_X}p^{-1}_X F} & \rho^{-1}{\tilde{f}^{\prime!}R\Gamma_{\Omega_X}p^{-1}_XF} & H & {{}.}
	\arrow[from=1-1, to=1-2]
	\arrow[from=1-2, to=1-3]
	\arrow["{+1}", from=1-3, to=1-4]
\end{tikzcd}\]
Then $H$ is supported in $S_\chi$. 
And it is enough to prove $\pi^{-1}(V) \cap \mS((s_Y^{-1}H)^\wedge) = \emptyset$.  Thus by the identification \cite[Theorem 3.3]{HPY},  it is enough to prove $\pi^{-1}(V) \cap \mS(H)= \emptyset$. By Theorem \ref{cor6.4.4} it is enough to prove:
\begin{claim}
For any $q_0 \in V$ which has $q \in S_\chi \times_{\tilde{Y}} T^\ast \tilde{Y}$ such that $q_0 = {}^t s_Y^\prime(q) \in V$, we have $\tilde{f}^\prime$ is non-characteristic for $\mS(R\Gamma_{\Omega_X}p^{-1}_XF)$ at $q$.
\end{claim}
Using Lemma \ref{gamesti}, the rest of the proof goes along the same lines in Theorem \ref{multi-inverse} replacing $\mS(F)$ with $W$. In fact, if we assume $\tilde{f}^\prime$ is characteristic for $\mS(R\Gamma_{\Omega_X}p^{-1}_XF)$ at $q$, we get a contradiction.
\endproof
\begin{lem}\label{gamesti}
{\itshape
Let $\Omega_X$ be an open subset in $X$. $F \in D^b(k_{X_{sa}})$ be strongly regular along $V$. Then:
\[
\mS(R\Gamma_{\Omega_X} p^{-1}_X F) \subset {}^t\tilde{p}'_X \tilde{p}^{-1}_{X\pi} V \hat{+} N^\ast (\Omega_X).
\]
In particular,
\[
\mS(\rho^{-1}R\Gamma_{\Omega_X} p^{-1}_X F) \subset {}^t\tilde{p}'_X \tilde{p}^{-1}_{X\pi} V \hat{+} N^\ast (\Omega_X).
\]
}
\end{lem}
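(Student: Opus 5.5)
The plan is to reduce the estimate to the classical one for each member of the ind-system, using the same ``limit of microsupports'' mechanism as in the proof of Theorem \ref{sec 2: multi-estimate}. Since microsupport is a local notion, we may work near a point of $X$. By strong regularity along $V$ there are $F'\simeq F$ near that point, a small filtrant category $I$ and a functor $i\mapsto F_i\in D^{[a,b]}_{\Rc}(k_X)$ with $J(F')\simeq\indlim{i}J(R\rho_*F_i)$ and $\mS(F_i)\subset V$ over the neighborhood.

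The first step is to commute the operations with the ind-limit. The functor $p_X^{-1}$ preserves $\mathbb{R}$-constructibility and commutes with $R\rho_*$ and with $J$ by \cite[Proposition 2.3.2]{KSW}. The functor $R\Gamma_{\Omega_X}=R\intHom(k_{\Omega_X},\cdot)$ with $\Omega_X$ subanalytic open also commutes, by \cite[Theorem 3.3]{KS01mic}, the same way it was used in Theorem \ref{estimate1}. Together these give
\[
J(R\Gamma_{\Omega_X}p_X^{-1}F')\simeq\indlim{i}J\bigl(R\rho_*R\Gamma_{\Omega_X}p_X^{-1}F_i\bigr).
\]
Applying \cite[(4.1)]{KS01mic} then yields
\[
\mS(R\Gamma_{\Omega_X}p_X^{-1}F)\subset\lim_i\mS(R\Gamma_{\Omega_X}p_X^{-1}F_i).
\]

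The second step is the classical estimate for each $F_i$, exactly as in the proof of Lemma \ref{lem1}. Theorem 6.3.1 of \cite{KS90} gives
\[
\mS(R\Gamma_{\Omega_X}\tilde p_X^{-1}F_i)\subset\mS(\tilde p_X^{-1}F_i)\hat{+}N^\ast(\Omega_X).
\]
Proposition 5.4.5 of \cite{KS90} (for the submersion $\tilde p_X$ restricted to $\Omega_X$) gives
\[
\mS(\tilde p_X^{-1}F_i)={}^t\tilde p'_X\tilde p_{X\pi}^{-1}\mS(F_i)\subset{}^t\tilde p'_X\tilde p^{-1}_{X\pi}V.
\]
Since the operation $A\mapsto{}^t\tilde p'_X\tilde p^{-1}_{X\pi}A\hat{+}N^\ast(\Omega_X)$ is monotone in $A$, every member of the family lies in the single closed conic set ${}^t\tilde p'_X\tilde p^{-1}_{X\pi}V\hat{+}N^\ast(\Omega_X)$ (we take $V$ closed conic, or replace it by its closure). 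The limit $\lim_i$ of a family contained in a fixed closed set is contained in that set, since a point outside it has a neighborhood missing every $\Lambda_i$. This gives the first inclusion. The second inclusion follows from the first by \cite[Proposition 3.13 (ii)]{M}, as at the end of Theorem \ref{sec 2: multi-estimate}.

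The main obstacle is making the first step rigorous. The hypotheses of \cite[Theorem 6.3.1]{KS90} require $\Omega_X$ to be an open set with a suitable boundary (here $\Omega_X=\{t_j>0\}$ locally, which is fine). One also has to check that the commutation of $R\Gamma_{\Omega_X}$ with the ind-limit under $J$ genuinely holds, which needs $k_{\Omega_X}$ to be $\mathbb{R}$-constructible. It also needs $R\intHom(k_{\Omega_X},\cdot)$ to respect the bounded amplitude $[a,b]$ uniformly in $i$, up to a fixed shift. If the direct appeal to \cite[Theorem 3.3]{KS01mic} is insufficient, I would instead write $R\Gamma_{\Omega_X}=Rj_*j^{-1}$ for the open embedding $j$ and handle $Rj_*$ via \cite[Proposition 2.3.2]{KSW}, using that $\rho_*$ commutes with $Rj_*$.
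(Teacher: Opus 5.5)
Your proposal is correct and follows the paper's proof essentially step by step. You commute $R\Gamma_{\Omega_X}p_X^{-1}$ with the ind-limit via \cite[Theorem 3.3]{KS01mic}, bound $\mS$ by $\lim_i$ via \cite[(4.1)]{KS01mic}, rewrite $R\Gamma_{\Omega_X}p_X^{-1}F_i$ as $Rj_{X\ast}\tilde p_X^{-1}F_i$, and conclude with \cite[Theorem 6.3.1]{KS90} and \cite[Proposition 5.4.5]{KS90}; your extra remarks (locality, the fact that the closed set ${}^t\tilde p'_X\tilde p^{-1}_{X\pi}V\hat{+}N^\ast(\Omega_X)$ contains $\lim_i$, and \cite[Proposition 3.13 (ii)]{M} for the second inclusion) fill in details the paper leaves implicit.
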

\proof
By the strong regularity we have $J(F) \simeq \indlim{i} J(F_i)$ and $\mS(F_i) \subset V$. By \cite[Theorem 3.3]{KS01mic}, $J(R\Gamma_{\Omega_X} p^{-1}_X F) \simeq \indlim{i} J(R\Gamma_{\Omega_X} p^{-1}_X F_i)$ . By \cite[(4.1)]{KS01mic} we have
\begin{eqnarray}
\mS(R\Gamma_{\Omega_X} p^{-1}_X F) &\subset& \lim_i \mS(R\Gamma_{\Omega_X} p^{-1}_X F_i)\\
&=& \lim_i \mS(Rj_{X\ast} \tilde{p}_X^{-1} F_i)\\
&\subset& \lim_i N^\ast(\Omega_X) \hat{+} \mS(\tilde{p}_X^{-1} F_i)\label{Thm6.3.1}\\
&\subset&N^\ast(\Omega_X) \hat{+} {}^t\tilde{p}'_X \tilde{p}^{-1}_{X\pi} V.
\end{eqnarray}
Here \eqref{Thm6.3.1} follows from \cite[Theorem 6.3.1]{KS90}.
\endproof

\section{Applications to $\mathcal{D}$-modules}\label{Sec: D}
\subsection{Solution of $\mathcal{D}$-modules in complex domains with growth conditions.}
Recall the following triangle in \cite[Theorem 1.8]{S}:
\begin{theo}\label{uchida}
{\itshape
Let $F \in D^b(k_{X_{sa}})$ be an object of the derived category of subanalytic sheaves. We have the following distinguished triangle:
\[
	{\nu_{\chi}^{sa}(F)} \to {\tau^{-1}R\Gamma_M(F)\otimes \ori_{M/X}[n]} \to {Rp_{1\ast}R\Gamma_{P^+}p^{-1}_{2}\mu_\chi^{sa} (F)\otimes \ori_{M/X}[n]}\xrightarrow{+1}.
\]
Here $n= \codim M$.
}
\end{theo}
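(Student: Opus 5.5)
Theorem~\ref{uchida} is quoted from \cite[Theorem 1.8]{S}, so the plan is to reconstruct it as the multi-microlocal analogue of the Sato triangle for subanalytic sheaves, \cite[Proposition 4.3.3/4.4.3]{KS90}, using the Fourier--Sato transform and the identification of $\mu^{sa}_\chi$ as the Fourier--Sato transform of $\nu^{sa}_\chi$ (\cite{HPY}). I would work on the multi-normal bundle $S_\chi$ and its dual $S^\ast_\chi$. Let $\tau\colon S_\chi\to M$ be the projection and $P^+\subset S_\chi\times_M S^\ast_\chi$ the closed set where the pairing is $\geq 0$; $p_1,p_2$ are the two projections.

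\textbf{Step 1.} By the inversion formula for the multi-conic Fourier--Sato transform in the subanalytic setting \cite{HPY}, I would write $\nu^{sa}_\chi(F)\simeq Rp_{1!!}(p_2^{-1}\mu^{sa}_\chi(F))_{P'}\otimes\ori_{M/X}[n]$, up to the shift and orientation normalisation. Here $P'$ is the open set where the pairing is $<0$. Equivalently, $\nu^{sa}_\chi(F)\simeq Rp_{1\ast}R\Gamma_{P^+}p_2^{-1}\mu^{sa}_\chi(F)$, with the appropriate twist. For conic objects these two Fourier--Sato expressions are interchangeable.

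\textbf{Step 2.} I would apply $Rp_{1\ast}(\,\cdot\,)\otimes\ori_{M/X}[n]$ to the excision triangle attached to the decomposition of $S_\chi\times_M S^\ast_\chi$ into the closed set $P^+$ and its complement. This produces a triangle whose last vertex is $Rp_{1\ast}R\Gamma_{P^+}p_2^{-1}\mu^{sa}_\chi(F)\otimes\ori_{M/X}[n]$, and whose middle vertex is $Rp_{1\ast}p_2^{-1}\mu^{sa}_\chi(F)\otimes\ori_{M/X}[n]$.

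\textbf{Step 3.} I would identify that middle vertex with $\tau^{-1}R\Gamma_M(F)\otimes\ori_{M/X}[n]$. Since $\mu^{sa}_\chi(F)$ is multi-conic, $Rp_{1\ast}p_2^{-1}\simeq\tau^{-1}R\pi_\ast$ (conic base change), so this reduces to computing $R\pi_\ast\mu^{sa}_\chi(F)$. For this I would use the multi-conic analogue of $R\pi_\ast\mu_M F\simeq R\Gamma_M F$: restricting the Fourier--Sato transform to the zero section gives $R\pi_\ast\mu^{sa}_\chi F\simeq\tau_!\nu^{sa}_\chi F$ up to the twist. Then $\tau_!\nu_\chi\simeq$ the restriction of $\nu$ to the zero section, and this is $R\Gamma_M F$ by the subanalytic version of \cite[Theorem 4.2.3]{KS90}, applied iteratively along $M_1,\dots,M_\ell$ via H1--H3. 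The first vertex is the remaining term $Rp_{1\ast}R\Gamma_{P'}$-type piece, which Step 1 identifies with $\nu^{sa}_\chi(F)$. The remaining work is to check that the composite morphism $\nu^{sa}_\chi F\to\tau^{-1}R\Gamma_M F\otimes\ori[n]$ is the canonical one, which follows by functoriality of the constructions.

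\textbf{Main obstacle.} I expect the hard step to be justifying the conic base-change and Fourier inversion for subanalytic sheaves in the multi-conic setting. The functor $p_{1!!}$ and the multi-conic structure, with $\ell$ independent $\mathbb{R}^+$-actions, require the \cite{HPY} machinery. I would reduce to $\mathbb{R}$-constructible sheaves through $J$ and \cite[Theorem 3.3]{KS01mic}, as in the proof of Theorem~\ref{estimate1}, where the classical statements hold, and then pass to the ind-limit. Conservativity of $J$ then yields the triangle for $F$.
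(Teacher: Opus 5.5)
The paper gives no proof of this statement; it is quoted from \cite[Theorem 1.8]{S}. Your route (inverse Fourier--Sato transform, excision for the half-space decomposition of $S_\chi\times_M S^\ast_\chi$, and $Rp_{1\ast}p_2^{-1}\mu^{sa}_\chi(F)\simeq\tau^{-1}R\pi_\ast\mu^{sa}_\chi(F)\simeq\tau^{-1}R\Gamma_M(F)$) is the natural one. However, the open/closed bookkeeping in Steps 1--2 is reversed, and that bookkeeping is exactly the content of the triangle. With your choice of $P^+$ as the closed set $\{\langle x,y\rangle\geq 0\}$ the statement is false. Take $\chi=\{M\}$, $M=\{0\}\subset X=\mathbb{R}^n$, $F=k_X$. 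Then $\nu_M(F)\simeq k_{T_MX}$ and $\tau^{-1}R\Gamma_M(F)\otimes\ori_{M/X}[n]\simeq k_{T_MX}$. On the other hand $\mu_M(F)\simeq k_M\otimes\ori_{M/X}[-n]$ lives on the zero section, so $p_2^{-1}\mu_M(F)$ is supported in $\{y=0\}\subset P^+$, and the third vertex is again $k_{T_MX}$. Since $H^1(T_MX;k)=0$, a triangle $k_{T_MX}\to k_{T_MX}\to k_{T_MX}\xrightarrow{+1}$ would split, which is absurd. Your ``Equivalently'' in Step 1 is precisely the assertion that $\nu^{sa}_\chi(F)$ equals this third vertex. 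Two further identifications are also off. First, $Rp_{1!!}(\,\cdot\,)_{P'}$ with $P'=\{\langle x,y\rangle<0\}$ open is not the inverse transform: for the constant sheaf its stalk at $x\neq0$ is the compactly supported cohomology of an open half-space, which is nonzero. Second, in the excision triangle $R\Gamma_Z\to\mathrm{id}\to R\Gamma_U\xrightarrow{+1}$ ($Z$ closed, $U$ its open complement), the closed piece is the first vertex and the open piece the last, the opposite of what Steps 2--3 say.

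The fix, with the conventions of \cite[\S 3.7]{KS90} and up to the global sign convention, is as follows. The inverse transform is $\nu^{sa}_\chi(F)\simeq Rp_{1\ast}R\Gamma_Z\, p_2^{-1}\mu^{sa}_\chi(F)\otimes\ori_{M/X}[n]$ with $Z=\{\langle x,y\rangle\leq 0\}$ closed; equivalently, it is $Rp_{1!!}$ of the restriction to the closed set $\{\langle x,y\rangle\geq 0\}$. Then $P^+$ must be the open complement $\{\langle x,y\rangle>0\}$, so that $R\Gamma_{P^+}=Rj_\ast j^{-1}$. Applying $Rp_{1\ast}(\,\cdot\,)\otimes\ori_{M/X}[n]$ to $R\Gamma_Z\to\mathrm{id}\to R\Gamma_{P^+}\xrightarrow{+1}$, together with your middle-term identification, gives the triangle with $\nu^{sa}_\chi(F)$ as first vertex. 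Openness of $P^+$ is also what the paper uses in the Bochner corollary. There, $\Gamma(U;\cdot)$ of the third vertex becomes sections of $\mu_\chi$ over the open set $S^\ast_\chi\setminus U^{\circ a}=p_2(p_1^{-1}(U)\cap P^+)$, the fibres being convex.

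There are two smaller points. In Step 3, $R\tau_!\nu_\chi(F)$ is not the restriction of $\nu_\chi(F)$ to the zero section; that restriction is $R\tau_\ast\nu_\chi(F)\simeq F|_M$. For conic objects $R\tau_!\simeq i^!$, and the fact you need is $R\tau_!\nu_\chi(F)\simeq R\Gamma_M(F)|_M$. Also, the reduction through $J$ does not work as described. Conservativity of $J$ only certifies that an already constructed morphism is an isomorphism; it cannot produce a distinguished triangle. Moreover, $Rp_{1\ast}$ and $Rj_\ast$ along non-proper maps do not in general commute with $\indlim{}$. The reduction is in any case unnecessary: the excision triangle and the (multi-)conic Fourier--Sato inversion are available directly in $D^b(k_{X_{sa}})$ (cf.\ \cite{Lu2,HPY}), so the corrected argument can be run there as is.
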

\begin{cor}\label{iso1}
{\it
Let $F \in D^b(k_{X_{sa}} )$. Assume $F$ is strongly regular along $V$. If $\dot{S}_{\chi}^\ast \cap C_{\chi^\ast}(V) =\emptyset$, then
\[
\nu_\chi^{sa}(F) \overset{\sim}{\rightarrow} \tau^{-1}R\Gamma_M(F) \otimes  \ori_{M/X}[n].
\]
}
\end{cor}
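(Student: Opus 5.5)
The plan is to use the distinguished triangle of Theorem \ref{uchida} and show that its third term vanishes. Theorem \ref{uchida} gives
\[
\nu_{\chi}^{sa}(F) \to \tau^{-1}R\Gamma_M(F)\otimes \ori_{M/X}[n] \to Rp_{1\ast}R\Gamma_{P^+}p^{-1}_{2}\mu_\chi^{sa} (F)\otimes \ori_{M/X}[n]\xrightarrow{+1}.
\]
It therefore suffices to prove
\[
Rp_{1\ast}R\Gamma_{P^+}p^{-1}_{2}\mu_\chi^{sa} (F)=0.
\]
Once this holds, the first arrow is an isomorphism by the long exact cohomology sequence, and this arrow is the one asserted in the corollary.

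To see the vanishing, first apply Theorem \ref{estimate1}. Since $F$ is strongly regular along $V$, we get $\mu_\chi^{sa}(F)\simeq \mu_\chi^{sa}(F)_{Z}$ with $Z=C_{\chi^\ast}(V)\cap S^\ast_\chi$. The hypothesis $\dot{S}^\ast_\chi\cap C_{\chi^\ast}(V)=\emptyset$ forces $Z$ to lie in the zero section of $S^\ast_\chi$, which we identify with $M$. Hence $\mu_\chi^{sa}(F)$ is concentrated on the zero section, and the whole question becomes a local one over points of $M$.

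The key step, and the one I expect to be the main obstacle, is to show that $Rp_{1\ast}R\Gamma_{P^+}p_2^{-1}G=0$ whenever $G$ is supported on the zero section. I expect $P^+$ to be the set $\{\langle v,\xi\rangle \ge 0\}$ in $S_\chi\times_M S^\ast_\chi$ (or its multi-conic analogue), as in the microlocalization/specialization triangle of \cite[Theorem 1.8]{S}. The argument would then run as follows. Over the zero section $\xi=0$, the set $P^+$ is everything, so $R\Gamma_{P^+}p_2^{-1}G\simeq p_2^{-1}G$ there. Consequently $Rp_{1\ast}$ reduces, fiberwise over a point $v\in S_\chi$, to sections of $G$ over the zero fiber. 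These sections are the same as the ones computed in the first two terms, and the comparison morphism realises exactly this identification.

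If this direct route is delicate in the subanalytic setting, the first fallback is to reduce via $J$ and strong regularity to the classical sheaves $F_i$. For those, \cite[Theorem 3.6]{HPY} gives $\supp \mu_\chi(F_i)\subset C_{\chi^\ast}(\mS(F_i))\cap S^\ast_\chi$, which is contained in the zero section. The classical statement is then that $\nu_\chi(F_i)\to \tau^{-1}R\Gamma_M(F_i)\otimes\ori_{M/X}[n]$ is an isomorphism. Passing back along $\indlim{i}$ would use \cite[Theorem 3.3]{KS01mic} and the conservativity of $J$ \cite[Corollary 15.4.4]{KS06}, exactly as in the proof of Theorem \ref{estimate1}.
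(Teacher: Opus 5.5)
\paragraph{Verdict.} Your overall plan is the paper's plan, and the paper's proof is just ``Theorem \ref{uchida} plus Theorem \ref{estimate1}''. However, your key step, as you argue it, does not work.

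\paragraph{The gap: your choice of $P^+$.} You take $P^+$ to be the closed set $\{\langle v,\xi\rangle\ge 0\}$. With that choice the vanishing you need is false. If $G$ is supported on the zero section, then $\supp p_2^{-1}G\subset\{\xi=0\}\subset P^+$. Hence $R\Gamma_{P^+}p_2^{-1}G\simeq p_2^{-1}G$ globally, not only ``over the zero section''. Then $Rp_{1\ast}R\Gamma_{P^+}p_2^{-1}G$ is $\tau^{-1}$ of the restriction of $G$ to the zero section. For $G=\mu^{sa}_\chi(F)$ this is essentially the middle term $\tau^{-1}R\Gamma_M(F)$ again, which is nonzero in general.

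Your own sketch computes exactly this (``sections of $G$ over the zero fiber'') and then says the comparison morphism realises this identification. But if the second arrow of the triangle were an isomorphism, the triangle would give $\nu^{sa}_\chi(F)=0$, not the claimed isomorphism $\nu^{sa}_\chi(F)\to\tau^{-1}R\Gamma_M(F)\otimes\ori_{M/X}[n]$. So that paragraph does not prove the vanishing; it argues against it.

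\paragraph{The missing point.} In the triangle of Theorem \ref{uchida}, $P^+$ is the \emph{open} set $\{\langle v,\xi\rangle>0\}\subset S_\chi\times_M S^\ast_\chi$. Thus $R\Gamma_{P^+}=Rj_\ast j^{-1}$ for the open inclusion $j$. You can see this in the paper's proof of the Bochner corollary: there the third term is written $Rp^{+}_{1\ast}(p^{+}_2)^{-1}$, and its sections over a convex cone $U$ are computed on $S^\ast_\chi\setminus U^{\circ a}$. The latter is exactly $\{\xi : \langle v,\xi\rangle>0 \text{ for some } v\in U\}$.

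\paragraph{How the vanishing then follows.} This $P^+$ does not meet $p_2^{-1}$ of the zero section. Theorem \ref{estimate1}, together with $\dot S^\ast_\chi\cap C_{\chi^\ast}(V)=\emptyset$, gives $\mu^{sa}_\chi(F)|_{\dot S^\ast_\chi}\simeq(\mu^{sa}_\chi(F)_Z)|_{\dot S^\ast_\chi}=0$, locally over $X$, which suffices. Therefore $j^{-1}p_2^{-1}\mu^{sa}_\chi(F)=0$, the third term vanishes, and the first arrow is an isomorphism. That is the entire argument.

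\paragraph{Your fallback.} It does not repair the gap. The ``classical statement'' you invoke for the $F_i$ is the same corollary for classical sheaves. It needs the same vanishing, or a separate argument that the inverse Fourier--Sato transform of a conic sheaf supported on the zero section is $\tau^{-1}$ of its restriction, up to orientation and shift. It would also require checking that $\tau^{-1}R\Gamma_M$ and the morphisms of the triangle commute with $\indlim{i}$.
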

\proof
By Theorem \ref{uchida} and Theorem \ref{estimate1}, the result follows.
\endproof
Let $\chi:= \{Y_1, Y_2, \dots, Y_\ell\}$ satisfy the conditions H1, H2 and H3 of [7] and assume each $Y_i$ and $Y:=\bigcap Y_i$ are complex submanifolds of $X$. 
Let $f: Y \hookrightarrow X$ be the canonical embedding. Let $\mathcal{M}$ be a coherent $\mathcal{D}_X$-module.
We define the inverse image of $\mathcal{M}$ by
\[
Df^\ast \mathcal{M}:= \mathscr{O}_Y \underset{f^{-1}\mathscr{O}_X}{\overset{L}{\otimes}} f^{-1}\mathcal{M}.
\]
Assume that $Y$ is non-characteristic for $\mathcal{M}$; that is,
$T_Y^*X \cap {Char} \mathcal{M} \subset T_X^*X$.
Then, it is known that ${D}f^*\mathcal{M}$ is identified with 
$Df^*\mathcal{M} := H^0{D}f^*\mathcal{M}$,
and $Df^*\mathcal{M}$ is a coherent $\mathcal{D}_Y$-module.

\begin{theo}\label{appD: init}
{\it
Assume that $Y$ is non-characteristic for an involutive subbundle $V$ in $T^\ast X$ and $\mathcal{M}$ is regular coherent $\mathcal{D}_X$-module along $V$. Then 
\begin{align*}
\RintHom_{\rho_!\mathcal{D}_X}(\rho_!\mathcal{M}, \nu_\chi^{sa}(\mathscr{O}_X^\lambda)) &\simeq \tau^{-1}\RintHom_{\rho_! \mathcal{D}_Y}(\rho_! Df^\ast \mathcal{M}, \mathscr{O}^\lambda_Y)\\
&\simeq \tau^{-1} f^{-1}\RintHom_{\rho_! \mathcal{D}_X}(\rho_! \mathcal{M}, \mathscr{O}^\lambda_X),
\end{align*}
where $\lambda = w$ or $t$.
}
\end{theo}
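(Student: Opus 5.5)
The plan is to commute $\RintHom_{\rho_!\mathcal{D}_X}(\rho_!\mathcal{M},\cdot)$ past $\nu^{sa}_\chi$, then apply Corollary \ref{iso1} to $G:=\Sol^\lambda(\mathcal{M})$, which is strongly regular along $V$ by Theorem \ref{reg_sol}. The last step is a Cauchy--Kovalevskaya type identification of $f^{-1}G$, and then of $f^!G$, with the solutions of $Df^\ast\mathcal{M}$.

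\textbf{Step 1.} Since $\mathcal{M}$ is coherent, locally it has a finite free resolution by $\mathcal{D}_X^{\oplus m}$. The functors $\nu^{sa}_\chi$, $\tau^{-1}$, $R\Gamma_Y$ and $f^{-1}$ are triangulated and compatible with the $\rho_!\mathcal{D}_X$-actions. Therefore
\[
\RintHom_{\rho_!\mathcal{D}_X}(\rho_!\mathcal{M},\nu^{sa}_\chi(\mathscr{O}^\lambda_X))\simeq\nu^{sa}_\chi(\Sol^\lambda(\mathcal{M})).
\]
This reduces to the free case, where it is tautological; the general case follows by a d\'evissage on the resolution, in the spirit of the proof of Theorem \ref{reg_sol}. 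The same argument gives $\RintHom_{\rho_!\mathcal{D}_X}(\rho_!\mathcal{M},\tau^{-1}R\Gamma_Y\mathscr{O}^\lambda_X)\simeq\tau^{-1}R\Gamma_Y\Sol^\lambda(\mathcal{M})$.

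\textbf{Step 2.} We check that $\dot S^\ast_\chi\cap C_{\chi^\ast}(V)=\emptyset$. Non-characteristicity of $Y$ for $V$ means $V\cap \dot T^\ast_Y X=\emptyset$. Here $V$ is a closed conic subbundle and $Y=M$ is the intersection of the family. Lemma \ref{non-char} then gives the required emptiness. That lemma is proved in the normal crossing setting, but its argument only uses the limit description of $C_{\chi^\ast}$ from \cite[Lemma 1.4]{HPY}; alternatively, one can work locally in coordinates where $\chi$ is of normal crossing type. Corollary \ref{iso1} now yields
\[
\nu^{sa}_\chi(G)\simeq\tau^{-1}R\Gamma_Y(G)\otimes\ori_{Y/X}[n]\simeq \tau^{-1}f^{!}G\otimes \ori_{Y/X}[n],
\]
with $n$ the real codimension.

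\textbf{Step 3.} This step gives the two displayed isomorphisms of the theorem. Since $f$ is non-characteristic for $V\supset \mS(F_i)$, Theorem \ref{cor6.4.4} together with \cite[Proposition 5.3.8]{Lu2} gives $f^{-1}G\otimes\omega_{Y/X}\simeq f^!G$. Because $Y$ is complex, $\omega_{Y/X}\simeq\ori_{Y/X}[-n]$ and the orientation sheaf is trivial, so the shifts cancel. We obtain
\[
\nu^{sa}_\chi(G)\simeq\tau^{-1}f^{-1}\Sol^\lambda(\mathcal{M}),
\]
which is the second isomorphism.

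For the first isomorphism we need $f^{-1}\Sol^\lambda(\mathcal{M})\simeq\Sol^\lambda_Y(Df^\ast\mathcal{M})$. This is the Cauchy--Kovalevskaya--Kashiwara theorem with growth conditions. I would prove it by reduction to the case $\mathcal{M}=\mathcal{D}_X/\mathcal{D}_XP$ for a single operator $P$, and then by induction on a resolution. For such $\mathcal{M}$ the statement is a division theorem for $\mathscr{O}^w$ and $\mathscr{O}^t$ along $Y$. The model is \cite[Corollary A.4.8, A.3.7]{Lu2}, which was used in Theorem \ref{reg_sol} for $\mathcal{D}_{X\to Y}$: those results give $f^{-1}\mathscr{O}^\lambda_Y\simeq\Sol^\lambda(\mathcal{D}_{X\to Y})$. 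In the local product coordinates $X=Z\times Y'$ adapted to $V$, the non-characteristic hypothesis lets one write $\mathcal{M}$ locally via resolutions by $\mathcal{D}_{X\to Y'}$, as in \cite[Lemma 3.6]{KFS}. Its restriction to $Y$ is then computed explicitly, so the identification reduces to those corollaries.

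I expect this final step to be the main obstacle: the tempered and Whitney Cauchy--Kovalevskaya isomorphism for non-characteristic restriction, and checking its compatibility with the structure given by $V$. Steps 1 and 2 are formal.
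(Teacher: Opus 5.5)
Your argument has the same structure as the paper's proof. You commute $\RintHom_{\rho_!\mathcal{D}_X}(\rho_!\mathcal{M},\cdot)$ with $\nu^{sa}_\chi$, use Theorem~\ref{reg_sol} and Lemma~\ref{non-char} to apply Corollary~\ref{iso1} to $G=\Sol^\lambda(\mathcal{M})$, trade $R\Gamma_Y$ for $f^{-1}$ via \cite[Proposition 5.3.8]{Lu2}, and finish with a Cauchy--Kowalevski--Kashiwara isomorphism.

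The step you single out as the main obstacle is not proved in the paper. It is exactly the Cauchy--Kowalevski--Kashiwara theorem with growth conditions, \cite[Theorem 3.1.1]{LucaCKK}, which holds for any coherent $\mathcal{M}$ with $Y$ non-characteristic, and the paper simply cites it. There is also a smaller point. The hypothesis of \cite[Proposition 5.3.8]{Lu2} concerns $\mS(G)$ itself, so Theorem~\ref{cor6.4.4}, which is only a statement after $\rho^{-1}$, is not what you need. The paper uses $\mS(\Sol^\lambda(\mathcal{M}))=Char(\mathcal{M})\subset V$ from \cite[Corollary 2.2.3]{LucaCKK}. You could equally use $\mS(G)\subset\lim_i\mS(F_i)\subset V$ from \cite[(4.1)]{KS01mic}.

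You also sketch two ways to re-derive the Cauchy--Kowalevski--Kashiwara step yourself. The single-operator route would need the genuine tempered and Whitney Cauchy--Kowalevski estimates, which amounts to reproving \cite{LucaCKK}.

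The second route is a legitimate and more self-contained alternative in this regular setting, but you have the roles of the hypotheses swapped:
\begin{enumerate}
\item The local resolution by finite sums of $\mathcal{D}_{X\to Y'}$ comes from regularity of $\mathcal{M}$ along $V$ \cite[Lemma 3.6]{KFS}, not from non-characteristicity.
\item What non-characteristicity of $Y$ for $V=X\times_{Y'}T^\ast Y'$ gives is that $p\circ f\colon Y\to Y'$ is a submersion.
\item Then $Df^\ast\mathcal{D}_{X\to Y'}\simeq\mathcal{D}_{Y\to Y'}$, concentrated in degree $0$ because $\mathcal{D}_{X\to Y'}$ is $\mathcal{O}_X$-flat.
\item Both $f^{-1}\Sol^\lambda(\mathcal{D}_{X\to Y'})$ and $\Sol^\lambda_Y(\mathcal{D}_{Y\to Y'})$ are then isomorphic to $(p\circ f)^{-1}\mathscr{O}^\lambda_{Y'}$ by \cite[Corollary A.3.7, Corollary A.4.8]{Lu2}.
\end{enumerate}
To propagate this along the resolution you still need a natural comparison morphism $f^{-1}\Sol^\lambda(\mathcal{M})\to\Sol^\lambda_Y(Df^\ast\mathcal{M})$. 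It must be functorial in $\mathcal{M}$ and compatible with the identifications above; restriction of Whitney or tempered holomorphic functions provides it.

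This route avoids \cite{LucaCKK} but works only for $\mathcal{M}$ regular along $V$. That is exactly the hypothesis here, whereas the cited theorem needs no regularity.
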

\proof
By the assumption, $Y$ is non-characteristic for $\mathcal{M}$. Recall $\mS(\Sol^\lambda(\mathcal{M})) = Char(\mathcal{M})$  by \cite[Corollary 2.2.3]{LucaCKK}.
By \cite[Proposition 5.3.8]{Lu2}
\begin{eqnarray}
\tau^{-1}R\Gamma_Y\RintHom_{\rho_!\mathcal{D}_X}(\rho_!\mathcal{M}, \mathscr{O}_X^\lambda)\otimes \omega^{\otimes-1}_{Y/X} \simeq \tau^{-1} f^{-1}\RintHom_{\rho_!\mathcal{D}_X}(\rho_! \mathcal{M}, \mathscr{O}^{\lambda}_X).
\end{eqnarray}
By \cite[Theorem 3.1.1]{LucaCKK} we have:
\begin{eqnarray}
\tau^{-1} f^{-1}\RintHom_{\rho_!\mathcal{D}_X}(\rho_!\mathcal{M}, \mathscr{O}_X^\lambda) \simeq \tau^{-1}\RintHom_{\rho_!\mathcal{D}_Y}(\rho_!Df^\ast \mathcal{M}, \mathscr{O}^\lambda_Y).
\end{eqnarray}
Since $\text{Sol}^\lambda(\mathcal{M})$ is strongly regular along $V$ by Theorem \ref{reg_sol}, it is enough to check $\dot{S}_{\chi}^\ast\cap C_{\chi^\ast}(V) =\emptyset$ by Corollary \ref{iso1}. By Lemma \ref{non-char}, we have
\[
\dot{S}^*_{\chi} \cap C_{\chi *}(V) = \emptyset.
\]
Hence we obtain the desired result.
\endproof
\begin{cor}\label{app: init1}
{\it
Assume that $Y$ is non-characteristic for an involutive subbundle $V$ in $T^\ast X$ and $\mathcal{M}$ is regular coherent $\mathcal{D}_X$-module along $V$. Then 
\begin{align}
\RintHom_{\mathcal{D}_X}(\mathcal{M}, \nu_\chi(\mathscr{O}_X^\lambda)) &\simeq \tau^{-1}\RintHom_{\mathcal{D}_Y}(Df^\ast \mathcal{M}, \mathscr{O}_Y)\\
&\simeq \tau^{-1} f^{-1}\RintHom_{\mathcal{D}_X}(\mathcal{M}, \mathscr{O}_X),
\end{align}
where $\lambda = w$ or $t$.
}
\end{cor}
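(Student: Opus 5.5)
The plan is to derive the corollary from Theorem \ref{appD: init} by applying the functor $\rho^{-1}$ to both sides and identifying the results with the classical objects. The chain of isomorphisms in Theorem \ref{appD: init} lives in $D^b(k_{T_\chi X\,sa})$, with $\lambda=w$ or $t$. Since $\rho^{-1}$ is exact and commutes with $\tau^{-1}$ and $f^{-1}$, applying it gives
\[
\rho^{-1}\RintHom_{\rho_!\mathcal{D}_X}(\rho_!\mathcal{M}, \nu_\chi^{sa}(\mathscr{O}_X^\lambda)) \simeq \tau^{-1}\rho^{-1}\RintHom_{\rho_!\mathcal{D}_Y}(\rho_!Df^\ast\mathcal{M}, \mathscr{O}_Y^\lambda) \simeq \tau^{-1}f^{-1}\rho^{-1}\RintHom_{\rho_!\mathcal{D}_X}(\rho_!\mathcal{M}, \mathscr{O}_X^\lambda).
\]

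First, I will handle the left-hand side. Because $\mathcal{M}$ is coherent, it is locally a bounded complex of finite free $\mathcal{D}_X$-modules. I will use the standard commutation $\rho^{-1}\RintHom_{\rho_!\mathcal{D}_X}(\rho_!\mathcal{M},G)\simeq \RintHom_{\mathcal{D}_X}(\mathcal{M},\rho^{-1}G)$, valid for coherent $\mathcal{M}$ (as in \cite{Lu2}). Together with $\rho^{-1}\nu^{sa}_\chi \simeq \nu_\chi\rho^{-1}$ (from \cite{HPY}), this turns the left side into $\RintHom_{\mathcal{D}_X}(\mathcal{M},\nu_\chi(\mathscr{O}_X^\lambda))$, where $\nu_\chi(\mathscr{O}_X^\lambda)$ is read as $\rho^{-1}\nu^{sa}_\chi(\mathscr{O}^\lambda_X)$.

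Second, I will handle the right-hand sides using $\rho^{-1}\mathscr{O}_X^\lambda\simeq\mathscr{O}_X$ for $\lambda=w,t$. This holds because $\rho^{-1}\mathcal{D}b^t=\mathcal{D}b$ and $\rho^{-1}\mathcal{C}^{\infty,w}=\mathcal{C}^\infty$, and Dolbeault resolutions then give $\mathscr{O}_X$. The same commutation as in the first step yields $\tau^{-1}\RintHom_{\mathcal{D}_Y}(Df^\ast\mathcal{M},\mathscr{O}_Y)$ and $\tau^{-1}f^{-1}\RintHom_{\mathcal{D}_X}(\mathcal{M},\mathscr{O}_X)$. For the first of these I also need $Df^\ast\mathcal{M}$ to be coherent, which holds by non-characteristicity.

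The main obstacle I expect is justifying that $\rho^{-1}$ commutes with $\RintHom$ over $\rho_!\mathcal{D}$ and with $\nu^{sa}_\chi$. For the $\RintHom$ I will reduce to the free case $\mathcal{M}=\mathcal{D}_X^N$, where both sides equal $(\rho^{-1}G)^N$, and then argue by dévissage along a local finite free resolution, using that $\rho^{-1}$ is exact.
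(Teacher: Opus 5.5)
Your route is the one the paper intends. The corollary is stated without proof right after Theorem~\ref{appD: init}, and it is the image of that chain under $\rho^{-1}$. The ingredients are exactly the ones you list:
\begin{itemize}
\item $\rho^{-1}$ commutes with $\tau^{-1}$ and $f^{-1}$;
\item $\rho^{-1}\RintHom_{\rho_!\mathcal{D}_X}(\rho_!\mathcal{M},G)\simeq\RintHom_{\mathcal{D}_X}(\mathcal{M},\rho^{-1}G)$ for coherent $\mathcal{M}$, and your reduction to the free case through a local finite free resolution is fine;
\item $\rho^{-1}\mathscr{O}^\lambda\simeq\mathscr{O}$ for $\lambda=w,t$;
\item coherence of $Df^\ast\mathcal{M}$, which follows from non-characteristicity.
\end{itemize}

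One claim must be removed: $\rho^{-1}\nu^{sa}_\chi\simeq\nu_\chi\rho^{-1}$ is false, so do not try to justify it as your ``main obstacle''.
\begin{itemize}
\item Since $\nu^{sa}_\chi=s^{-1}R\Gamma_{\Omega}p^{-1}$ and $\rho^{-1}$ does not commute with $R\Gamma_\Omega=Rj_\ast j^{-1}$, this failure is precisely what encodes the growth conditions.
\item If your isomorphism held, combining it with your second step $\rho^{-1}\mathscr{O}^\lambda_X\simeq\mathscr{O}_X$ would give $\rho^{-1}\nu^{sa}_\chi(\mathscr{O}^\lambda_X)\simeq\nu_\chi(\mathscr{O}_X)$ for both $\lambda$. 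That erases the temperate and asymptotic conditions.
\item A counterexample already appears for $X=\mathbb{R}$, $M=\{0\}$, $v=1$: the function $e^{1/x}$ on $(0,\varepsilon)$ gives a section of $\nu_M(\mathcal{D}b_X)$. It is not a section of $\rho^{-1}\nu^{sa}_M(\mathcal{D}b^t_X)$, because it is not tempered at $0$.
\item A compatibility of this kind holds for special inputs such as $R\rho_\ast G$ with $G$ $\mathbb{R}$-constructible, not for $\mathscr{O}^\lambda_X$.
\end{itemize}

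The step is also unnecessary. As you yourself note, $\nu_\chi(\mathscr{O}^\lambda_X)$ in the statement means $\rho^{-1}\nu^{sa}_\chi(\mathscr{O}^\lambda_X)$. Once $\rho^{-1}$ has been moved inside $\RintHom$, the left-hand side is already the desired object, and your argument is complete without that claim.
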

\begin{rem}
If $\chi=\{M\}$ is a single submanifold then we do not need $\mathcal{D}$-modules $\mathcal{M}$ to be regular to get the assertion of the Corollary thanks to \cite[Proposition 5.3.4.]{Lu2}. If we take $\lambda = w$ and $\chi$ as a normal crossing divisor, Corollary \ref{app: init1} asserts that the initial value problem for Majima's strongly asymptotically developable functions is uniquely solvable for a non-characteristic regular $\mathcal{D}$-module $\mathcal{M}$.
\end{rem}
\subsection{The division theorems with growth conditions for regular $\mathcal{D}$-modules}
We state division theorems with coefficients of temperate and Whitney multi-microfunctions for regular $\mathcal{D}$-modules.
Let $f: Y \hookrightarrow X$ be a closed embedding of complex manifolds. Let $\chi^N=\{N_1,\dots, N_\ell\}$ be a family of closed submanifolds of $Y$. Let $\chi^M=\{M_1,\dots, M_\ell\}$ be a family of closed submanifolds of $X$.  We also assume the conditions below:
\begin{enumerate}
\item $f(N_j) \subset M_j, j=1, \dots, \ell$,
\item $M_j \subseteq M_{j^\prime}$ iff $N_j \subseteq N_{j^\prime}$.
\end{enumerate}

Assume that these families are of normal crossing type.
Recall the following diagram:
\[\begin{tikzcd}
	{S^\ast_{\chi^N}} & {N \times_M S_{\chi^M}^\ast}& {S_{\chi^M}^\ast} \\
	{T^\ast_N Y} & {N \times_M T^\ast_M X} & {T^\ast_M X,}
	\arrow["\pi","\sim"',from=1-1, to=2-1]
	\arrow["\pi","\sim"',from=1-3, to=2-3]
	\arrow["\sim"',from=1-2, to=2-2]
	\arrow["f_{\chi \pi}",from=1-2, to=1-3]
	\arrow["{{}^t f^\prime_\chi}"', from=1-2, to=1-1]
	\arrow["{{}^t f^\prime_N}"', from=2-2, to=2-1]
	\arrow["{f_{N\pi}}", from=2-2, to=2-3]
\end{tikzcd}\]
where $\pi$ is defined in \eqref{pi}.
\begin{theo}\label{division}
{\itshape
Assume that these families are of normal crossing type.
Let $\mathcal{M}$ be a coherent $\mathcal{D}_X$-module regular along an involutive subbundle $W$. Let $V$ be an open subset of $T^\ast_N Y$.
Assume:
\begin{enumerate}[label=(\roman*)]
\item $f$ is non-characteristic for $W$,
\item $f_{N\pi}: {}^t f_N^{\prime-1}(V) \to T^\ast_M X$ is non-characteristic for $C_{T^\ast_MX}(W)$,
\item $f_{\chi \pi}: {}^t f^{\prime-1}_\chi(\pi^{-1}(V)) \to S_{\chi^{M}}^\ast$ is non-characteristic for $C_{\chi^{M\ast}}(W)$,
\item ${}^t f^{\prime-1}(V)\cap f^{-1}_\pi (W) \subset Y \times_X T^\ast_M X$,
\item ${}^t  f'_{\chi}$ is proper on ${}^t f^{\prime-1}_\chi(\pi^{-1}(V))\cap f^{-1}_{\chi \pi}(C_{\chi^{M\ast}}(W)\cap S^\ast_{\chi^M})$.
\end{enumerate}

Then we have:
\[
\mathrm{R}{}^tf^\prime_{\chi\ast}(f^{-1}_{\chi\pi}\RintHom_{\mathcal{D}_X}(\mathcal{M}, \mu_{\chi^M}(\mathscr{O}^\lambda_X))\otimes \ori_{S_{\chi^{M\ast}}})[d] \xrightarrow{\sim} \RintHom_{\mathcal{D}_Y}(Df^\ast\mathcal{M}, \mu_{\chi^N}(\mathscr{O}^\lambda_Y)\otimes \ori_{S_{\chi^{N\ast}}})
\]
on $\pi^{-1}(V)$. Here $d:= d^{\mathbb{C}}_X-d^{\mathbb{C}}_Y-(d_M-d_N)$, $\lambda= \emptyset, t$ or $w$.
}
\end{theo}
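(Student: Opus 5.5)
The plan is to reduce Theorem \ref{division} to Theorem \ref{multi-inverse-sub} applied to $F:=\Sol^\lambda(\mathcal{M})$, combined with the Cauchy--Kowalevski--Kashiwara theorem with growth conditions. We treat $\lambda=t,w$; the case $\lambda=\emptyset$ is the classical analogue and uses Theorem \ref{multi-inverse} with $F=\RintHom_{\mathcal{D}_X}(\mathcal{M},\mathscr{O}_X)$ and $\mS(F)=\mathrm{Char}(\mathcal{M})\subset W$ in place of strong regularity. By Theorem \ref{reg_sol}, $F$ is strongly regular along the closed conic set $W$, and the hypotheses (i)--(v) of Theorem \ref{division} are exactly hypotheses (i)--(v) of Theorem \ref{multi-inverse-sub} for this $W$. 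Hence on $\pi^{-1}(V)$ we get
\[
\mu_{\chi^N}(f^!F)\xrightarrow{\sim} R{}^tf'_{\chi\ast} f^!_{\chi\pi}\mu_{\chi^M}(F), \qquad \omega_{N/M}\otimes f^{-1}_{\chi\pi}\mu_{\chi^M}F\xrightarrow{\sim} f^!_{\chi\pi}\mu_{\chi^M}F .
\]
The second isomorphism is the one obtained inside the proof of Theorem \ref{multi-inverse-sub} from Theorem \ref{sec 2: multi-estimate} and hypothesis (iii).

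Next we identify both sides with $\mathcal{D}$-module expressions. We commute $\RintHom_{\mathcal{D}_X}(\mathcal{M},\cdot)$ with $\mu_{\chi^M}$, using that $\mathcal{M}$ is locally of finite free presentation and $\mu_{\chi^M}$ is a triangulated functor; this gives $\mu_{\chi^M}(F)\simeq\RintHom_{\mathcal{D}_X}(\mathcal{M},\mu_{\chi^M}(\mathscr{O}^\lambda_X))$. For the source, we use $\omega_{N/M}\simeq\ori_{N/M}[d_N-d_M]$ and rewrite it through $\ori_{S_{\chi^{M\ast}}}$ and $\ori_{S_{\chi^{N\ast}}}$. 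For the target, hypothesis (i) makes $Y$ non-characteristic for $\mathcal{M}$ (since $\mathrm{Char}\,\mathcal{M}\subset W$). By \cite[Theorem 3.1.1]{LucaCKK} and \cite[Proposition 5.3.8]{Lu2}, as in the proof of Theorem \ref{appD: init}, we then get
\[
f^{!}F\simeq f^{-1}F\otimes\omega_{Y/X}\simeq \RintHom_{\rho_!\mathcal{D}_Y}(\rho_!Df^\ast\mathcal{M},\mathscr{O}^\lambda_Y)[-2(d^{\mathbb{C}}_X-d^{\mathbb{C}}_Y)].
\]
Applying $\mu_{\chi^N}$ and again commuting with $\RintHom_{\mathcal{D}_Y}$, the target becomes $\RintHom_{\mathcal{D}_Y}(Df^\ast\mathcal{M},\mu_{\chi^N}(\mathscr{O}^\lambda_Y))$ up to shift. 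The real codimension bookkeeping yields $d=d^{\mathbb{C}}_X-d^{\mathbb{C}}_Y-(d_M-d_N)$. Composing these identifications with the isomorphisms from Theorem \ref{multi-inverse-sub} gives the statement.

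The main obstacle is this last bookkeeping step, together with the compatibility of the morphism. We must check that the composite isomorphism is the canonical arrow of the theorem, namely the one induced by the diagram \eqref{cd} and the CKK restriction morphism. We must also check that the orientation sheaves $\omega_{N/M}$ and $\omega_{Y/X}$ convert correctly into $\ori_{S_{\chi^{M\ast}}}$ and $\ori_{S_{\chi^{N\ast}}}$ with the stated shift $d$. We will handle this locally in the normal crossing coordinates of Section \ref{Sec: inverse}, where all these orientation sheaves are trivialized and their relation is explicit.
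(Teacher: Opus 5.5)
Your proposal is correct and is essentially the paper's proof: $\Sol^\lambda(\mathcal{M})$ is strongly regular along $W$ by Theorem \ref{reg_sol}, Theorem \ref{multi-inverse-sub} applies with identical hypotheses, and the growth-condition Cauchy--Kowalevski--Kashiwara theorem, together with commuting $\RintHom_{\mathcal{D}}$ past $\mu_{\chi}$, identifies the target. Two differences are worth noting: you go through the $f^!$-row of \eqref{cd} rather than the top row, which is immaterial; and your separate treatment of $\lambda=\emptyset$ via Theorem \ref{multi-inverse} (using the Remark after Lemma \ref{lem1} to drop its hypothesis (ii)) is more careful than the paper, since Theorem \ref{reg_sol} only covers $\lambda=t,w$.

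One caution on the bookkeeping you defer: with $\omega_{Y/X}\simeq k_Y[-2(d^{\mathbb{C}}_X-d^{\mathbb{C}}_Y)]$ and $\omega_{N/M}\simeq\ori_{N/M}[d_N-d_M]$ exactly as you wrote them, the shift that comes out is $2(d^{\mathbb{C}}_X-d^{\mathbb{C}}_Y)-(d_M-d_N)$ when $d_M,d_N$ are real dimensions. So you cannot simply assert that it yields the stated $d$; the conventions must be checked (the paper's proof does not carry out this computation either).
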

\proof
Let $F:= \RintHom_{\rho_! \mathcal{D}_X}(\rho_!\mathcal{M}, \mathscr{O}^\lambda_X)$. By Theorem \ref{reg_sol}, $F$ is strongly regular along $W$. By Theorem \ref{multi-inverse-sub} we have
\[
R{}^t f^{\prime}_{\chi \ast}(\omega_{N/M}\otimes f_\pi^{-1}\mu_{\chi^M}(F)) \xrightarrow{\sim} \mu_{\chi^N}(\omega_{Y/X} \otimes f^{-1}F)
\]
on $\pi^{-1}(V)$.
Finally by the assumption (i) and Cauchy-Kowaleskaya-Kashiwara theorem with growth conditions \cite[Theorem 3.1.1]{LucaCKK} we obtain
\begin{eqnarray}
\mu_{\chi^N} (f^{-1}F) &\simeq& \mu_{\chi^N}( \RintHom_{\rho_! \mathcal{D}_Y}(\rho_!Df^\ast \mathcal{M}, \mathscr{O}^\lambda_Y)) \\
&\simeq& \RintHom_{\mathcal{D}_Y}(Df^\ast \mathcal{M}, \mu_{\chi^N}(\mathscr{O}^\lambda_Y)).
\end{eqnarray}
\endproof
\begin{rem}
If we suppose $\lambda = \emptyset$, $\chi$ is a single submanifold and $\mathcal{M}$ is elliptic, one recovers \cite[Theorem 1]{KK}. If we suppose $\lambda = t$, $\chi$ is a single submanifold and $\mathcal{M}$ is elliptic, one recovers \cite[Theorem 1]{AndTose}.
\end{rem}
\begin{defi}
We say $\chi$ is a simultaneously linearizable family compatible with complexification if there exists a local coordinate transformation such that we can have the situation described in \cite[Section 4.3]{HPYaxv}.
\end{defi}
\begin{cor}\label{vanis}
{\itshape
Let $\chi^N$ be a simultaneously linearizable family compatible with complexification.
Assume $\mathcal{M}$ is a coherent $\mathcal{D}_X$-module regular along an involutive subbundle $V$ and $M=N$. Let $W$ be a open set in $T^\ast_N Y$. Let $Y$ be non-characteristic for $V$ and assume: 
\begin{enumerate}[label=(\roman*)]
\item ${}^t f^{\prime-1}(W)\cap f^{-1}_\pi (V) \subset Y \times_X T^\ast_M X$.
\end{enumerate}
For $j < \codim_{\mathbb{C}} M$, we have
\[
 R^j\intHom_{\mathcal{D}_X}(\mathcal{M}, \mu_{\chi^M}(\mathscr{O}^\lambda_X) )\simeq 0
\] 
on $\pi^{-1}(W)$. Here $\codim_{\mathbb{C}} M$ is the complex codimension of the maximal complex linear subspace contained in $M$ under the local coordinate system described in Definition 2.4.8.
}
\end{cor}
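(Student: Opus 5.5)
The plan is to deduce the vanishing from the division theorem (Theorem \ref{division}). This moves the question from $\mathcal{M}$ on $X$ to the induced system $Df^\ast\mathcal{M}$ on $Y$, where the vanishing theorem of \cite{HPYaxv} for multi-microfunctions with growth conditions can be applied. The specialization to $M=N$ collapses most of the hypotheses of Theorem \ref{division}.

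First I check those hypotheses with $W$ there replaced by $V$, and with the open set $V$ there replaced by $W$ here. Hypothesis (i) there is exactly the assumption that $Y$ is non-characteristic for $V$. Hypothesis (iv) there is assumption (i) of the corollary. Since $M=N$, the space $N\times_M S^\ast_{\chi^M}$ is just $S^\ast_{\chi^M}$ and $f_{\chi\pi}$ is the identity. Hence (iii) and (v) are trivial: a closed embedding onto itself is non-characteristic for any closed conic set, and the identity is proper. For (ii), $f_{N\pi}$ is also the identity on $N\times_M T^\ast_M X= T^\ast_M X$, so it is non-characteristic for $C_{T^\ast_MX}(V)$, once we know the relevant conormal set is contained in the zero section. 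I would check this locally from the definition of $f_{N\pi}$ as a morphism over $M=N$; if it needs an argument, I would use Lemma \ref{non-char} together with the non-characteristic assumption on $V$. With this, $d=d^{\mathbb{C}}_X-d^{\mathbb{C}}_Y$, and ${}^tf'_\chi$ is the natural projection $S^\ast_{\chi^M}\to S^\ast_{\chi^N}$ restricted over $M=N$.

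Theorem \ref{division} then gives, on $\pi^{-1}(W)$,
\[
\mathrm{R}{}^tf^\prime_{\chi\ast}\bigl(f^{-1}_{\chi\pi}\RintHom_{\mathcal{D}_X}(\mathcal{M}, \mu_{\chi^M}(\mathscr{O}^\lambda_X))\otimes \ori\bigr)[d] \simeq \RintHom_{\mathcal{D}_Y}(Df^\ast\mathcal{M}, \mu_{\chi^N}(\mathscr{O}^\lambda_Y)\otimes \ori).
\]
By \cite{KFS} (Lemma 3.6, already used in Theorem \ref{reg_sol}), locally $Df^\ast\mathcal{M}$ has a finite free resolution $\mathcal{L}^\bullet$ of length at most $\dim_{\mathbb{C}} Y$. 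Therefore $\RintHom_{\mathcal{D}_Y}(Df^\ast\mathcal{M},\mu_{\chi^N}(\mathscr{O}^\lambda_Y))$ is computed by a complex whose terms are finite sums of $\mu_{\chi^N}(\mathscr{O}^\lambda_Y)$, placed in degrees $\ge 0$.

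Next I apply the vanishing theorem of \cite{HPYaxv} for a simultaneously linearizable family compatible with complexification. It says $H^j\mu_{\chi^N}(\mathscr{O}^\lambda_Y)=0$ for $j<\codim_{\mathbb{C}}N$. A spectral-sequence (or stupid-truncation) argument then gives vanishing of $R^j\intHom_{\mathcal{D}_Y}(Df^\ast\mathcal{M},\mu_{\chi^N}(\mathscr{O}^\lambda_Y))$ for $j<\codim_{\mathbb{C}}N$.

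The last step is to transport this vanishing back to $\mathcal{M}$ through the isomorphism above. This is the main obstacle, because both the shift $[d]$ and the direct image ${}^tf'_\chi$ enter. I would first reduce to the case $X=Y\times\mathbb{C}^m$ with $M=N\subset Y$, by non-characteristic position and a local coordinate change adapted to the linearization. Then I would apply the vanishing statement directly on $X$. This works because $\chi^M$, viewed in $X$, is still simultaneously linearizable, and $\codim_{\mathbb{C}}M$ in $X$ exceeds that in $Y$ by exactly $d$. That adds the shift to the vanishing range and exactly compensates the shift $[d]$ in the division isomorphism.

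If that reduction fails, the fallback is to read the isomorphism degree by degree. Using that ${}^tf'_\chi$ has fibres which are vector spaces of real dimension $d$ over which the sheaf is locally constant by conicity, I would extract from the degree-$j$ vanishing for $Df^\ast\mathcal{M}$ the corresponding vanishing for $\mathcal{M}$ in degree $j$.
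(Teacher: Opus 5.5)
There is a genuine gap. It sits exactly where the paper does its real work: proving that ${}^tf'_\chi$ is proper, indeed finite, on the relevant support.

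\textbf{Hypothesis (v) is not trivial.} In checking the hypotheses of Theorem~\ref{division} you call (v) trivial because ``the identity is proper''. But (v) concerns ${}^tf'_\chi$, not $f_{\chi\pi}$. When $M=N$, only $f_{\chi\pi}$ and $f_{N\pi}$ become isomorphisms, so (ii) and (iii) are indeed automatic. The map ${}^tf'_\chi$ is, as you note yourself a few lines later, the restriction of covectors $T^*_MX\to T^*_MY$ read through $\pi$. This is a vector bundle projection with kernel $T^*_YX\times_Y M$ of real rank $2d$, so it is not proper. Properness on ${}^tf'^{-1}_\chi(\pi^{-1}(W))\cap f^{-1}_{\chi\pi}(C_{\chi^{M*}}(V)\cap S^*_{\chi^M})$ has to be extracted from the non-characteristic hypotheses. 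The paper does this with three ingredients:
\begin{itemize}
\item assumption (i);
\item Lemma~\ref{non-char};
\item the support estimate $\supp\mu_{\chi^M}(F)\subset C_{\chi^{M*}}(V)\cap S^*_{\chi^M}$ for $F=\Sol^\lambda(\mathcal{M})$ (Theorem~\ref{estimate1}).
\end{itemize}
From these it concludes that ${}^tf'_\chi$ is proper, indeed finite, on the support of $f^{-1}_{\chi\pi}\mu_{\chi^M}(F)$.

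\textbf{The last step.} That finiteness is also the missing idea in your transfer step. Set $G:=\RintHom_{\mathcal{D}_X}(\mathcal{M},\mu_{\chi^M}(\mathscr{O}^\lambda_X))$. Once ${}^tf'_\chi$ is finite on $\supp G$, the functor $R{}^tf'_{\chi\ast}$ is exact there. Its stalk at $q$ is the finite direct sum of the stalks of $H^k(G)$ at the preimages of $q$. So the vanishing of the right-hand side of the division isomorphism for $j<\codim_{\mathbb{C}}N$ gives, through the shift $[d]$, $H^k(G)=0$ for $k<d+\codim_{\mathbb{C}}N=\codim_{\mathbb{C}}M$. This is how the paper concludes.

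Your main route instead applies the Honda--Prelli--Yamazaki vanishing theorem directly to $\chi^M$ on $X$. This has two problems:
\begin{itemize}
\item It requires $\chi^M$ to be simultaneously linearizable compatible with complexification, which is assumed only for $\chi^N$. It does not follow from $M=N$: only the intersections coincide, the $M_j$ need not equal the $N_j$, and a local splitting $X\simeq Y\times\mathbb{C}^m$ does not linearize the $M_j$.
\item If it did hold, it would give the vanishing for every coherent $\mathcal{M}$, with no regularity and no use of the division isomorphism. So it does not ``compensate the shift $[d]$''; it simply bypasses the isomorphism you set up.
\end{itemize}

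Your fallback rests on a false premise. A (multi-)conic sheaf is locally constant along $\mathbb{R}^+$-orbits, not along the whole vector-space fibres of ${}^tf'_\chi$, which have real dimension $2d$, not $d$. In addition, a degree-$j$-to-degree-$j$ comparison ignores $[d]$ and would give at most the range $j<\codim_{\mathbb{C}}N$, short of the claimed $\codim_{\mathbb{C}}M$.
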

\proof
Since $N =M$, $f_{N\pi}: N \times_M T^\ast_M X \to T^\ast_M X$ and $f_{\chi \pi}: N \times_M S_{\chi^M}^\ast \to S_{\chi^{M}}^\ast$ are isomorphisms.

Note that $N \times_M T^\ast_M X \subset Y \times_X T^\ast_Y X$.
By the assumption (i), $f_{\pi}^{-1}(W) \cap N \times_M T^\ast_M X \subset f_{\pi}^{-1}(W) \cap Y \times_X T^\ast_Y X \subset T^\ast_X X$. By Lemma \ref{non-char}, we have $C_{\chi^{M\ast}}(W) \cap \dot{S}^\ast_{\chi^{M}} = \emptyset$.
Since $\supp (\mu_{\chi^M}(F)) \subset C_{\chi^{M\ast}}(W) \cap {S}^\ast_{\chi^M}$, ${}^t \kern -2pt f^\prime_{\chi}$ is proper on ${}^t f^{\prime-1}_\chi(\pi^{-1}(V))\cap \supp(f^{-1}_{\chi\pi} \mu^{}_{\chi^M}F )$. 

By Theorem \ref{division} we have,
\[
\mathrm{R}{}^tf^\prime_{\ast} \RintHom_{\mathcal{D}_X}(\mathcal{M}, \mu_{\chi^M}(\mathscr{O}_X^\lambda))[d] \simeq \RintHom_{\mathcal{D}_Y}(Df^\ast \mathcal{M}, \mu_{\chi^N}(\mathscr{O}_Y^\lambda)),
\]
on $\pi^{-1}(W)$.
It is well-known that $H^k (Df^\ast \mathcal{M}) =0$ for $k\not=0$ and $Df^\ast \mathcal{M}$ is a coherent $\mathcal{D}_Y$-module.
From \cite[Section 4]{HPYaxv}, we have $H^k(\mu_{\chi^N} (\mathscr{O}_Y^\lambda)) = 0$ for $k <\codim_{\mathbb{C}} N$.
Thus we obtain that $\mathrm{R}^j\intHom(Df^\ast \mathcal{M}, \mu_{\chi^N} (\mathscr{O}_Y^\lambda)) \simeq 0$ for $j<\codim_{\mathbb{C}} N$. Therefore for any $p\in W$ and for $j< d+ \codim_{\mathbb{C}} N$
\[
(\mathrm{R}^j{}^tf^\prime_{\chi\ast} \RintHom_{\mathcal{D}_X}(\mathcal{M}, \mu_{\chi^M}(\mathscr{O}_X^\lambda)))_{{}^tf^\prime_{\chi}(p)} \simeq 0.
\]
By the finitenes of ${}^tf^\prime_{\chi}$ we obtain
\[
 \mathrm{R}^j\intHom_{\mathcal{D}_X}(\mathcal{M},  \mu_{\chi^M}(\mathscr{O}_X^\lambda)) \simeq 0,
\]
for $j< d+ \codim_{\mathbb{C}} N$.
\endproof
\begin{cor}[Bochner's tube theorem]
{\itshape Let $\chi^M$ be a simultaneously linearizable family compatible with complexification.
Assume $\mathcal{M}$ is a coherent $\mathcal{D}_X$-module regular along an involutive subbundle $V$. Let $Y$ be non-characteristic for $V$.
Let $U$ be an open multi-conic subset of $S_{\chi^M}$. Let $\tilde{U}$ be a convex hull of $U$ in each fiber. We set $W:= S_{\chi^M}^\ast \setminus U^{\circ a}$ and assume: 
\begin{enumerate}[label=(\roman*)]
\item ${}^t f^{\prime-1}(\pi(W))\cap f^{-1}_\pi (V) \subset Y \times_X T^\ast_M X$.
\end{enumerate}
We have:
\[
\Gamma(\tilde{U}; \mathrm{R}^0\intHom_{\mathcal{D}_X}(\mathcal{M}, \nu_{\chi^M} (\mathscr{O}_X^\lambda)))\simeq \Gamma(U; \mathrm{R}^0\intHom_{\mathcal{D}_X}(\mathcal{M}; \nu_{\chi^M}(\mathscr{O}_X^\lambda))).
\]
}
\end{cor}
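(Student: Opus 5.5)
The plan is to combine the Sato-type distinguished triangle relating specialization and multi-microlocalization with the vanishing in Corollary \ref{vanis}, in the same way that Bochner's tube theorem is classically derived from the vanishing of microfunction cohomology in low degrees.

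\textbf{Step 1: reduce to a statement about sections over $U$.} Set $F:=\RintHom_{\rho_!\mathcal{D}_X}(\rho_!\mathcal{M},\mathscr{O}^\lambda_X)$. By Theorem \ref{reg_sol}, $F$ is strongly regular along $V$. Since $\nu_{\chi^M}$ commutes with $\RintHom_{\mathcal{D}_X}(\mathcal{M},\cdot)$, it suffices to show that
\[
R\Gamma(\tilde U;\nu_{\chi^M}(F))\to R\Gamma(U;\nu_{\chi^M}(F))
\]
is an isomorphism in degree $0$. Both $U$ and $\tilde U$ are multi-conic, and $\tilde U$ has convex fibers.

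\textbf{Step 2: rewrite sections over $U$ via multi-microlocalization.} As in the corollary following Lemma \ref{Sec2: formula1}, I would use \cite[Remark 3.2.7, Proposition 3.2.8]{Lu2} and \cite[Proposition 2.2]{HPY}. This identifies $R\Gamma(\tilde U;\nu_{\chi^M}F)$ with $R\Gamma_{\tilde U^{\circ a}}(S^\ast_{\chi^M};\mu_{\chi^M}F\otimes\ori[d])$. For a non-convex $U$, I would instead take a triangle relating $R\Gamma(U;\nu)$, the fiberwise $\tau^{-1}R\Gamma_M$-type term (Theorem \ref{uchida}), and $R\Gamma(S^\ast_{\chi^M}\setminus U^{\circ a};\mu_{\chi^M}F)$. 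Since $U^{\circ a}=\tilde U^{\circ a}$, comparing the triangles for $U$ and $\tilde U$ shows that the cone of the restriction map is controlled by $\mu_{\chi^M}(F)$ restricted to $W=S^\ast_{\chi^M}\setminus U^{\circ a}$, with a degree shift by the fiber codimension $n$.

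\textbf{Step 3: apply the vanishing.} Hypothesis (i), together with non-characteristicity of $Y$ for $V$, gives the hypotheses of Corollary \ref{vanis} with $W$ replaced by $\pi(W)$. Hence $R^j\intHom_{\mathcal{D}_X}(\mathcal{M},\mu_{\chi^M}(\mathscr{O}^\lambda_X))=0$ on $W$ for $j<\codim_{\mathbb{C}}M$. Since $\chi^M$ is simultaneously linearizable and compatible with complexification, $\codim_{\mathbb{C}}M$ equals the real fiber dimension shift. The cone term therefore has no cohomology in degrees $0$ and $1$. The five lemma then gives the isomorphism on $H^0$.

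\textbf{Main obstacle.} Step 2 is where I expect the difficulty: setting up the triangle for a non-convex multi-conic $U$, and tracking the orientation and shift. I would first try the Fourier–Sato/Sato-triangle formalism of \cite{S} applied to the closed complement $P^+$ of $U$ fiberwise, checking that the shift matches the range of degrees killed by Corollary \ref{vanis}.
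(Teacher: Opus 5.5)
Your ingredients match the paper's: the triangle of Theorem \ref{uchida} evaluated on $U$ and on $\tilde U$, the vanishing of Corollary \ref{vanis} on $W$, the equality $U^{\circ a}=\tilde U^{\circ a}$, and the five lemma. The way you combine them in Steps 2--3 does not work as stated, for two reasons.

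First, $U^{\circ a}=\tilde U^{\circ a}$ does not make the cone of $R\Gamma(\tilde U;\nu_{\chi^M}F)\to R\Gamma(U;\nu_{\chi^M}F)$ ``be'' $\mu_{\chi^M}F$ on $W$. What it does is make the third vertices of the two triangles coincide, so that they cancel in the comparison.

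Second, Corollary \ref{vanis} cannot give vanishing in degrees $0$ and $1$. Vanishing of $R^j\intHom_{\mathcal{D}_X}(\mathcal{M},\mu_{\chi^M}(\mathscr{O}^\lambda_X))$ for $j<n$ only places the third vertex $R\Gamma(W;\mu_{\chi^M}F)\otimes\ori_{M/X}[n]$ in degrees $\geq 0$. Its $H^0=\Gamma(W;R^n\intHom_{\mathcal{D}_X}(\mathcal{M},\mu_{\chi^M}(\mathscr{O}^\lambda_X))\otimes\ori_{M/X})$ is in general nonzero; in the classical case it is a space of microfunctions on $W$. So your final five-lemma step rests on a false claim.

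Relatedly, a triangle for non-convex $U$ with literally the three terms you list cannot exist in general. Comparing it with the one for $\tilde U$ would give $R\Gamma(\tilde U;\nu_{\chi^M}F)\simeq R\Gamma(U;\nu_{\chi^M}F)$ in all degrees without any vanishing, whereas higher cohomology over a non-convex $U$ genuinely differs.

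The vanishing should be used at the level of $H^0$ only, which is what the paper does. Write $\mathcal{H}^j(\cdot)$ for $R^j\intHom_{\mathcal{D}_X}(\mathcal{M},\cdot)$. Take $\Gamma(U;\cdot)$ of the triangle of Theorem \ref{uchida} and use that its third vertex has no cohomology in degree $-1$. This gives the left exact sequence
\[
0\to\Gamma\bigl(U;\mathcal{H}^0(\nu_{\chi^M}\mathscr{O}^\lambda_X)\bigr)\to\Gamma\bigl(\tau(U);\mathcal{H}^n(R\Gamma_M\mathscr{O}^\lambda_X)\otimes\ori_{M/X}\bigr)\to\Gamma\bigl(W;\mathcal{H}^n(\mu_{\chi^M}\mathscr{O}^\lambda_X)\otimes\ori_{M/X}\bigr).
\]
The same sequence holds for $\tilde U$, with the same second and third terms and the same map between them. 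Here $U^{\circ a}=\tilde U^{\circ a}$ is used. Identifying both middle terms with sections over $\tau(U)=\tau(\tilde U)$ also uses that the fibers of $U$ are connected, which is implicit in the statement.

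Both $H^0$'s are then the kernel of one and the same map, so the restriction is an isomorphism. You need neither the separate identification for the convex $\tilde U$ nor a triangle specific to non-convex $U$, which also removes your ``main obstacle''.
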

\proof
By the following triangle obtained from \cite[Theorem 1.8]{S} we get the result. 
\[\begin{tikzcd}
	{\RintHom_{\mathcal{D}_X}( \mathcal{M}, \nu_{\chi}(\mathscr{O}^\lambda_X))} & {\tau^{-1}\RintHom_{\mathcal{D}_X}(\mathcal{M},\rho^{-1}\mathrm{R}\Gamma_M(\mathscr{O}^\lambda_X))\otimes \omega^{\otimes-1}_{M/X}} && {{}} \\
	{{}} & {Rp^{+}_{1\ast}(p^{+}_2)^{-1} \RintHom_{\mathcal{D}_X}(\mathcal{M}, \mu_\chi(\mathscr{O}^\lambda_X))\otimes \omega^{\otimes-1}_{M/X}} & {{}}
	\arrow[from=1-1, to=1-2]
	\arrow[shorten <=46pt, from=2-1, to=2-2]
	\arrow["{+1}", from=2-2, to=2-3]
\end{tikzcd}\label{triangle}
\]
In fact,  by Corollary \ref{vanis} we obtain the following exact sequence:
\[
\begin{tikzcd}
	0 \to \Gamma(U; {\mathrm{R}^0\intHom_{\mathcal{D}_X}( \mathcal{M},\nu_{\chi}(\mathscr{O}^\lambda_X)))} & {\Gamma(M; R^d \intHom_{\mathcal{D}_X}(\mathcal{M},\rho^{-1}\mathrm{R}\Gamma_M(\mathscr{O}^\lambda_X))\otimes \ori_{M/X})} && {{}} \\
	{{}} & {\Gamma(S_\chi^\ast\setminus U^{\circ a}; \mathrm{R}^d\intHom_{\mathcal{D}_X}(\mathcal{M}, \mu_\chi(\mathscr{O}^\lambda_X))\otimes \ori_{M/X}).}
	\arrow[from=1-1, to=1-2]
	\arrow[shorten <=46pt, from=2-1, to=2-2]
\end{tikzcd}
\]
Similarly, we have an exact sequence as above with $U$ replaced with $\tilde{U}$.
Since $U^{\circ a}=\tilde{U}^{\circ a}$, by the five lemma the assertion follows.
\endproof
\begin{rem}
This is a generalization of the work \cite{U, AT, SKK}.
\end{rem}
\section*{Acknowledgements}
We thank Prof. N. Honda, Prof. L. Prelli and Prof. S. Yamazaki for generous supports and helpful comments through the preparation of this work.

\end{document}